\numberwithin{equation}{section}
\newcommand{\Z}{\mathbb{Z}}
\newcommand{\K}{\mathbb{K}}
\newcommand{\N}{\mathbb{N}}
\DeclareMathOperator{\Ann}{Ann}
\DeclareMathOperator{\gin}{gin}
\DeclareMathOperator{\link}{lk}
\DeclareMathOperator{\lk}{lk_{\Delta} }
\DeclareMathOperator{\Mon}{Mon}
\DeclareMathOperator{\Tor}{Tor}
\DeclareMathOperator{\pnt}{\raise 0.5mm \hbox{\large\bf.}}
\let\phi=\varphi
\newtheorem{theorem}{Theorem}[section]
\newtheorem{lemma}[theorem]{Lemma}
\newtheorem{proposition}[theorem]{Proposition}
\newtheorem{corollary}[theorem]{Corollary}
\newtheorem{lem-def}[theorem]{Lemma and Definition}
\newtheorem{prop-def}[theorem]{Proposition and Definition}
\theoremstyle{definition}
\newtheorem{definition}[theorem]{Definition} 
\newtheorem{remark}[theorem]{Remark}
\newtheorem{rem-def}[theorem]{Remark and Definition}
\newtheorem{example}[theorem]{Example}
\newtheorem{question}[theorem]{Question}
\newtheorem{problem}[theorem]{Problem}
\title{Squeezed Complexes}
\author{Martina Juhnke-Kubitzke}
\address{Universit\"at Osnabr\"uck, Institut f\"ur Mathematik, 49069 Osnabr\"uck, Germany}
\email{juhnke-kubitzke@uos.de}
\author[Uwe Nagel]{Uwe Nagel}
\address{Department of Mathematics, University of Kentucky, 715 Patterson Office Tower, Lexington, KY 40506-0027, USA}
\email{uwe.nagel@uky.edu}
\begin{document}

\begin{abstract} 
Given a shifted order ideal $U$, we associate to it a family of simplicial complexes $(\Delta_t(U))_{t\geq 0}$ that we call squeezed complexes. In a special case, our construction gives squeezed balls that were defined and used by Kalai to show that there are many more simplicial spheres than boundaries of simplicial polytopes. We study combinatorial and algebraic properties of squeezed complexes. In particular, we show that they are vertex decomposable and characterize when they have the weak or the strong Lefschetz property. Moreover, we define a new combinatorial invariant of pure simplicial complexes, called the singularity index, that can be interpreted as a measure of how far a given simplicial complex is from being a manifold. In the case of squeezed complexes $(\Delta_t(U))_{t\geq 0}$, the singularity index turns out to be strictly decreasing until it reaches (and stays) zero if $t$ grows.
\end{abstract}

\date{August 1, 2018}

\thanks{The first author was supported by the German Research Council DFG GRK-1916. The second author was partially supported by Simons Foundation grant \#317096.  }

\maketitle



\section{Introduction}
\label{sect_intro}

In a landmark paper \cite{Ka}, Kalai introduced a large class of triangulated balls, named squeezed balls, with remarkable properties. They are vertex decomposable, thus shellable, and their boundary complexes amount to many more triangulated spheres than there are boundary complexes of simplicial polytopes. These complexes were studied further by Murai \cite{Mu-07, Mu-10}. In this paper we introduce and study an extension of Kalai's construction. 

A shifted order ideal of a polynomial ring $P(m) = \K[x_1,\ldots,x_m]$ in $m$ variables over a field $\K$ is a set $U \subset P(m)$ of monomials satisfying the following conditions: 
\begin{itemize}
\item[(i)] if a monomial divides a monomial in $U$ then this monomial belongs to $U$; 

\item[(ii)] any variable dividing a monomial in $U$ can be replaced by any variable with a \emph{larger} index to obtain another monomial in $U$; 

\item[(iii)] $1, x_1,\ldots,x_m \in U$. 
\end{itemize}
In general, Condition (iii) is not assumed. However, it is convenient (and harmless). 

Let $U \subset P(m)$ be a finite shifted order ideal. Denote by $d_{\max} (U)$ the maximum degree of a monomial in $U$. For every integer $t \ge 0$, we construct a pure simplicial complex 
$\Delta_t (U)$ on the vertex set $\{1,2,\ldots,m+d_{\max}(U) + t\}$ of dimension 
$d_{\max} (U) + t-1$ (see Definition~\ref{def:squeezed complex}). For $t \ge d_{\max} (U)$ we recover the squeezed balls, as constructed by Kalai. Thus, we call $\Delta_t (U)$ a squeezed 
complex of index $d_{\max} (U) - t$ or simply a $(d_{\max} (U) - t)$-squeezed complex. An arbitrary 
squeezed complex is not necessarily a manifold, not even a pseudomanifold. However, we show that squeezed complexes  share several properties with squeezed balls. Every squeezed complex is vertex decomposable and thus shellable (see Theorem~\ref{thm:vertexdecomposable}). 
Moreover, fixing $U$, but varying $t \ge 0$, all squeezed complexes $\Delta_t (U)$ have the same $h$-vector (up to adding zeros at the end). Namely, $h_i(\Delta_t(U)) $ counts the number of monomials in $U$ of degree $i$ for $0\leq u\leq d_{\max}(U)$ and is zero otherwise. Extending results by Murai \cite{Mu-07, Mu-10}, we 
explicitly describe the Stanley-Reisner ideal of any squeezed complex 
(see Theorem~\ref{thm:SR-ideal})  as well as its generic initial ideal (see Proposition~\ref{prop:gin squeezed}). This also allows us to discuss Lefschetz properties. These important properties have been formalized in \cite{HMNW}, and we refer to the monograph \cite{HMMNWW} for further information. 

As conjectured by Kalai \cite{Ka} and shown by Murai \cite{Mu-07, Mu-10}, any boundary complex of a squeezed ball $\Delta_t (U)$ with $t \ge d_{\max} (U)$ has the strong Lefschetz property. However, this is not necessarily true for the squeezed ball itself. We characterize when an arbitrary squeezed complex has the weak or strong Lefschetz property, respectively (see Propositions~\ref{prop:char WLP} and \ref{prop:char SLP}). 

As mentioned above, an arbitrary squeezed complex $\Delta_t (U)$ is not even a pseudomanifold, but it is always a manifold, even a ball, if $t \ge d_{\max} (U)$. In fact, it is easy to see that for $m\geq 2$ and $d_{\max}(U)\geq 2$ a squeezed complex $\Delta_t(U)$ is a pseudomanifold if and only if it is a ball. Moreover, if $U$ is a maximal shifted order ideal, i.e., $U \subset P(m)$ contains every monomial in $U$ whose degree is at most $d_{\max} (U)$, then either of these conditions is true precisely if $t \ge d_{\max} (U)$ (see Theorem~\ref{thm:max order manifold}). We confirm the intuition that, for an arbitrary finite shifted order ideal  $U$, the squeezed complex $\Delta_t (U)$ is the closer to a pseudomanifold the larger $t$ is. To this end we associate to any pure simplicial complex $\Delta$ a quantity, called singularity index $D (\Delta)$ (see Definition~\ref{def:sing index}). It is a non-negative rational number that can be seen as a measure of how far $\Delta$ is from being a pseudomanifold. More precisely, if $\Delta$ is strongly connected, then its singularity index is zero if and only if $\Delta$ is a pseudomanifold. Fixing a finite shifted order ideal $U$, we show that the sequence of singularity indices $(D (\Delta_t (U)))_{t \ge 0}$ is strictly decreasing until it first becomes zero (see Theorem~\ref{thm:sing index}). This can be interpreted as replacing a given simplicial complex that is not a pseudomanifold by a sequence of simplicial complexes of increasing dimensions with  the same $h$-vector (up to additional zeros) that eventually are pseudomanifolds. Thus, it is somewhat reminiscent to the process of resolving singularities of a variety by repeated blow-ups  in algebraic geometry. 

Note that every shifted pure simplicial complex is up to coning a squeezed complex $\Delta_0 (U)$ for some order ideal $U$ (see Corollary~\ref{cor:shifted are squeezed}). Thus, the mentioned process may be viewed as a systematic way of replacing an arbitrary shifted pure simplicial complex by a pseudomanifold with the same $h$-vector (up to additional zeros). It would be interesting to extend this procedure to situations in which one starts with more general simplicial complexes such as, e.g., non-pure shifted simplicial complexes. 

Now we briefly describe the organization of this note. In the following section we  review some basic concepts and results.   Squeezed complexes are defined  in Section~\ref{sec:squeezed}.   There we also establish that they are always 
vertex-decomposable. Moreover, we describe explicitly the Stanley-Reisner ideal of any squeezed complex. These results are 
used in Section~\ref{sect: Alg Prop} in order to show that the graded Betti numbers of a squeezed complex can be read off from its shifted order ideal and to characterize whether it has a   Lefschetz property.  In Section~\ref{sec:sing index} we introduce  the singularity index of an arbitrary  pure simplical complex and determine it for some squeezed complexes. We conclude by establishing the mentioned property of the sequence of complexes $(\Delta_t (U))_{t \ge 0}$. The final section is devoted to a discussion of some questions that arose from this work.


\section{Preliminaries} 
    \label{sec:prelim}

In this section, we will provide the necessary background knowledge for this article. This includes some basics on simplicial complexes as well as some facts on strongly stable ideals and shifted order ideals and the relation between those two classes of objects. 

We start by fixing some notation that will be used. Throughout this article, we let $\K$ be an arbitrary field and we set $[n]: = \{1,2,\ldots,n\}$ for any $n\in \N$. Moreover, for $a,b\in\N$, we let $[a,b]$ denote the set of integers between $a$ and $b$, i.e., $[a,b]:=\{c\in \N~|~a\leq c\leq b\}$. For a finite set $A\subset \N$ and $n\in \N$, we set $A+n:=\{a+n~|~a\in A\}$. 

\subsection{Combinatorics of simplicial complexes}
We recall basic facts on simplicial complexes, including some of their combinatorial properties. We refer to \cite{Stanley-greenBook} for more details.

Given a finite set $V$, an (abstract) \emph{simplicial complex} $\Delta$ on the vertex set $V$ is a collection of subsets of $V$ that is closed under inclusion. Throughout this paper, all simplicial complexes are assumed to be finite. 
Elements of $\Delta$ are called \emph{faces} of $\Delta$ 
and inclusion-maximal faces are called \emph{facets} of $\Delta$. The \emph{dimension} of a face $F \in \Delta$ is its cardinality minus one, and the \emph{dimension} of $\Delta$ is defined as $\dim\Delta:=\max\{\dim F~|~F\in \Delta\}$. $0$-dimensional and $1$-dimensional faces are called \emph{vertices} and \emph{edges}, respectively. 
A \emph{ridge} is a face of dimension $\dim \Delta - 1$. A simplicial complex $\Delta$ is \emph{pure} if all its facets have the same dimension. The \emph{link} of a face $F\in \Delta$ is the subcomplex
$$
\lk(F)=\{G\in\Delta~|~G\cap F=\emptyset,\; G\cup F\in \Delta\}.
$$
We will write $\lk(v)$ for the link of a vertex $v$. 
The \emph{deletion} $\Delta\setminus F$ of a face $F\in \Delta$ from $\Delta$ describes the simplicial complex $\Delta$ outside of $F$:
$$
\Delta\setminus F=\{G\in \Delta~|~ F\not\subseteq G\}.
$$
If $F=\{v\}$ is a vertex, then we write $\Delta\setminus v$ instead of $\Delta\setminus\{v\}$ for short. 
For subsets $F_1,\ldots,F_m$ of a finite set $V$, we denote by $\langle F_1,\ldots,F_m\rangle$ the smallest simplicial complex containing $F_1,\ldots,F_m$, i.e.,
$$
\langle F_1,\ldots,F_m\rangle=\{G\subseteq V~|~G\subseteq F_i\mbox{ for some }1\leq i\leq m\}.
$$
Given a $(d-1)$-dimensional simplicial complex $\Delta$, we let $f_i(\Delta)$ denote the number of $i$-dimensional faces of $\Delta$ and we call $f(\Delta)=(f_{-1}(\Delta), f_0(\Delta), \ldots, f_{d-1}(\Delta))$ the \emph{$f$-vector} of $\Delta$. The \emph{$h$-vector} $h(\Delta)=(h_0(\Delta),h_1(\Delta),\ldots,h_d(\Delta))$ of $\Delta$ is defined via the relation
$$
\sum_{i=0}^{d}f_{i-1}(\Delta)(t-1)^{d-i}=\sum_{i=0}^dh_i(\Delta)t^{d-i}.
$$

We now consider several relevant combinatorial properties a simplicial complex.

\begin{definition}
Let $\Delta$ be a pure simplicial complex.
\begin{itemize}
\item[(i)] $\Delta$ is called \emph{shellable} if there exists an ordering $F_1,\ldots,F_n$ of the facets of $\Delta$ such that for each $1\leq i\leq n$ there exists a unique minimal element $R_i$ in 
$$\{G\subset F_i~|~G\not\subseteq F_j\mbox{ for } 1\leq j\leq i-1\}.$$
\item[(ii)] $\Delta$ is called \emph{vertex-decomposable} if either $\Delta=\{\emptyset\}$ or there exists a vertex $v\in \Delta$, a so-called \emph{shedding vertex}, such that $\lk(v)$ and $\Delta\setminus v$ are vertex-decomposable. 
\end{itemize}
\end{definition}

If $\Delta$ is a $(d-1)$-dimensional vertex decomposable simplicial complex with shedding vertex $v$, then the face numbers of $\Delta$ can be easily computed via the following recursion:
$$
f_i(\Delta)=f_i(\Delta\setminus v)+f_{i-1}(\lk(v)) \quad \mbox{for}\quad 0\leq i\leq d-1.
$$
Using the relations between $f$-vectors and $h$-vectors this immediately translates into:
\begin{equation}\label{h-vector:vertexdecomposable}
h_i(\Delta)=h_i(\Delta\setminus v)+h_{i-1}(\lk(v)) \quad \mbox{for}\quad 0\leq i\leq d.
\end{equation}
It is well-known \cite{PB} that vertex-decomposability is a stronger property than shellability. Moreover, every shellable simplicial complex has the following property: 

\begin{definition}
A pure $(d-1)$-dimensional simplicial complex $\Delta$ is called \emph{strongly connected} if for any two facets $F$ and $G$ of $\Delta$, there exist facets $F=F_0,F_1,\ldots, F_m=G$ such that $|F_i\cap F_{i+1}|=d-1$ for all $0\leq i\leq m-1$. 
\end{definition}

In this article, we will be interested in yet another class of simplicial complexes that are strongly connected but not necessarily shellable. 

\begin{definition}
 A pure simplicial complex $\Delta$ is a \emph{pseudomanifold} if it is strongly connected and every ridge lies in at most two facets. 
\end{definition} 

Both, shellable simplicial complexes and pseudomanifolds are strongly connected but there exist shellable simplicial complexes that are not pseudomanifolds and, vice versa, not every pseudomanifold is shellable. 

A pure $(d-1)$-dimensional simplicial complex $\Delta$ is a \emph{(simplicial) sphere} respectively \emph{ball} if the geometric realization of $\Delta$ is homeomorphic to the $(d-1)$-sphere $\mathbb{S}^{d-1}$ respectively the $(d-1)$-ball $\mathbb{B}^{d-1}$. 
Clearly, every ball or sphere is a pseudomanifold, but the class of pseudomanifolds contains far more objects than just balls and spheres. However, if we require a pseudomanifold to be shellable, then the following result is true (see e.g., \cite{DK}):
\begin{lemma}
    \label{lem:Kalai criterium} 
Let $\Delta$ be a shellable simplicial complex. Then $\Delta$ is a pseudomanifold if and only if $\Delta$ is a ball or a sphere. 
\end{lemma}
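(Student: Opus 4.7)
The ``if'' direction is immediate from the definitions. For the converse, my plan is a double induction: an outer induction on $\dim\Delta$, and inside that an inner induction on the length of a fixed shelling $F_1,\ldots,F_n$ of $\Delta$. The outer base case $\dim\Delta=0$ is handled by noting that a $0$-dimensional shellable pseudomanifold has at most two vertices, and is therefore a $0$-ball or a $0$-sphere. For the outer inductive step the statement I would prove by the inner induction is that each partial complex $\Delta_i:=\langle F_1,\ldots,F_i\rangle$ is a $(d-1)$-ball for $i<n$ and that $\Delta=\Delta_n$ is either a $(d-1)$-ball or a $(d-1)$-sphere, where $d-1=\dim\Delta$.

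The central object at each step is the intersection $R_i:=F_i\cap\Delta_{i-1}$. The shelling axiom makes $R_i$ pure of dimension $d-2$ and shellable as a subcomplex of $\partial F_i$, and I would combine this with the pseudomanifold hypothesis to show that $R_i$ is itself a shellable $(d-2)$-dimensional pseudomanifold. The key point is that each ridge of $F_i$ contained in $R_i$ comes from a \emph{unique} earlier facet $F_j$: a second such earlier facet would place this ridge in three facets of $\Delta$, contradicting the pseudomanifold condition. Once $R_i$ is known to be a shellable pseudomanifold, the outer inductive hypothesis applies and identifies $R_i$ as a $(d-2)$-ball or a $(d-2)$-sphere; since $R_i$ sits inside the simplicial sphere $\partial F_i$, the latter is possible only when $R_i=\partial F_i$.

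I would then split into two cases. If $R_i\subsetneq\partial F_i$, then $R_i$ is a $(d-2)$-ball, and $\Delta_i$ is obtained from the $(d-1)$-ball $\Delta_{i-1}$ by gluing the $(d-1)$-simplex $F_i$ along a common $(d-2)$-ball in their boundaries; a standard combinatorial gluing lemma then yields that $\Delta_i$ is again a $(d-1)$-ball. If $R_i=\partial F_i$, every ridge of $F_i$ is paired with a previous facet and so already lies in two facets of $\Delta_i$; any further facet sharing a ridge with $F_i$ would place that ridge in three facets, and strong connectivity of $\Delta$ together with this obstruction forces $i=n$. In that situation $\partial F_i$ must coincide with the boundary of the ball $\Delta_{n-1}$, and closing this ball with $F_i$ produces a $(d-1)$-sphere.

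The hard part will be verifying the two gluing claims rigorously, and in particular identifying $R_i$ as a shellable pseudomanifold via the induced ordering---this requires checking that the inherited ordering of the facets of $R_i$ really satisfies the shelling axiom, a step that uses the pseudomanifold condition in an essential way. Everything else in the argument is bookkeeping about ridge incidences and applying the inductive hypotheses.
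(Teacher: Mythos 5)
The paper cites this as a classical result of Danaraj and Klee and gives no proof of its own, so there is nothing to compare against; I am evaluating your outline on its own terms. Your overall strategy---double induction following the Danaraj--Klee argument---is the right one, but there is a genuine gap: you misidentify where the pseudomanifold hypothesis is used, and without fixing that, your gluing step does not go through. You claim that the uniqueness observation (``each facet of $R_i = F_i\cap\Delta_{i-1}$ comes from a unique earlier $F_j$, else that ridge would lie in three facets of $\Delta$'') is what proves $R_i$ is a shellable pseudomanifold. That conclusion does not follow from the observation, and it is also not needed: $R_i$ is \emph{automatically} a shellable pseudomanifold, with no appeal to any hypothesis on $\Delta$, simply because it is a nonempty pure full-dimensional subcomplex of the boundary $\partial F_i$ of a simplex. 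In $\partial F_i$ every $(d-3)$-face lies in exactly two $(d-2)$-faces (so ridges of $R_i$ lie in at most two facets), the dual graph on the $d$ facets of $\partial F_i$ is complete (so any nonempty subset of them generates a strongly connected complex), and any ordering of the chosen facets is a shelling. Where your uniqueness observation \emph{is} genuinely needed---and this is what your sketch omits---is to show that $R_i \subseteq \partial\Delta_{i-1}$, i.e., every facet of $R_i$ is a boundary ridge of $\Delta_{i-1}$. That containment is exactly the hypothesis of the combinatorial gluing lemma you invoke: to conclude that $\Delta_{i-1}\cup F_i$ is a ball, the common ball $R_i$ must sit inside $\partial\Delta_{i-1}$ and not run into its interior. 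If some facet $G$ of $R_i$ were interior to $\Delta_{i-1}$, it would already lie in two facets there and then also in $F_i$, hence in three facets of $\Delta$, contradicting the pseudomanifold condition---your own observation, placed where it actually belongs. As written, your sketch never establishes $R_i\subseteq\partial\Delta_{i-1}$, and the gluing lemma simply does not apply without it.

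Two smaller remarks. The outer induction on dimension for identifying $R_i$ as a ball or sphere is unnecessary overhead, since a nonempty proper pure full-dimensional subcomplex of $\partial F_i$ is directly a ball and the improper case is $\partial F_i$ itself. And both there and in the final step (arguing that $R_n=\partial F_n$ forces $\partial F_n=\partial\Delta_{n-1}$) you are relying on the fact that a simplicial $(d-2)$-sphere cannot be a proper subcomplex of another simplicial $(d-2)$-sphere; this should be stated and justified, for example by comparing top reduced homology.
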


\subsection{Shifted order ideals and strongly stable ideals}
We start by fixing some notation. 
We write $P(m) = \K[x_1,\ldots,x_m]$ for a polynomial ring in $m$ variables over a field $\K$.  
Moreover, $\Mon(m)$ denotes the set of all monomials in $P(m)$, and $\Mon_{\leq k}(m)$ and $\Mon_k(m)$ the set of those monomials in $P(m)$ whose degree is at most $k$ and is equal to $k$, respectively. 
 For a monomial ideal $I\subset P(m)$, we use $G(I)$ to denote its  unique minimal set of monomial generators. We write $d_{\max}(I)$ for  the maximal degree of an element in $G(I)$.  
A monomial ideal $I\subset P(m)$ is called \emph{strongly stable} if for each $x_i u\in I$ also the monomial $x_j u$ lies in $I$ whenever $1\leq j<i$, that is, any variable dividing a monomial in $I$ can be replaced by any variable with a \emph{smaller} index to obtain another monomial in $I$.  Note that it is enough to verify this condition for the minimal monomial generators of $I$. 

A (finite) \emph{monomial order ideal} of $P(m)$ is a (finite) subset $U$ of $\Mon (m)$, that is closed under taking divisors, i.e., whenever $u \in U$ and $v$ divides $u$, then $v \in U$. Observe that every non-empty monomial 
order ideal contains the monomial $1$. In the following, we will always assume that a monomial order ideal $U\subset P(m)$ contains all the variables $x_1,\ldots,x_m$. Moreover, without further mentioning, all monomial order ideals appearing in this article will be finite. We denote with $d_{\max}(U)$ the maximal degree of a monomial in $U$ and with $[U]_k$ the set of degree $k$  monomials of $U$.  
A monomial order ideal $U$ is said to be \emph{shifted} if with each monomial $x_i u\in U$ also the monomial $x_j u$ is in $U$ whenever $i < j \le m$, that is, any 
variable dividing a monomial in $U$ can be replaced by any variable with a \emph{larger} index to obtain another monomial in $U$. 

It is easy to see that Artinian monomial ideals in $P(m)$ and finite order ideals in $P(m)$ are in one-to-one-correspondence. Moreover, under this correspondence, strongly stable ideals and shifted order ideals are mapped to each other. We now make this construction precise:

\begin{definition} 
     \label{def:I(U)}
Let $U \subset \Mon (m)$ be a monomial order ideal.  The \emph{ideal to $U$} is the monomial ideal $I(U)$ of $P(m)$ that is generated by the monomials in 
$\Mon (m) \setminus U$. 
\end{definition} 

If $U$ is a finite monomial order ideal, then every monomial in $P(m)$ of sufficiently large degree belongs to $I(U)$. Thus, the quotient ring $P(m)/I(U)$ is a finite-dimensional $\K$-vector space, or, in other words, $I(U)$ is an Artinian ideal. 

We will now compare the maximal degrees $d_{\max}(U)$ and $d_{\max}(I(U))$  of $U$ and $I(U)$, respectively.  First observe that, since $U$ contains all the variables, all elements of $I(U)$ are of degree at least $2$ and hence $d_{\max}(I(U))\geq 2$. 
If $U \subset \Mon_{\le d+1} (m)$ and thus $d_{\max}(U)\leq d+1$, then we have  
\[
I(U) = (u~|~u \in\Mon_{\le d+2} (m) \setminus U). 
\]
In particular, this means that $d_{\max}(I(U))\leq d+2\leq d_{\max}(U)+1$.  We can also classify, when equality holds.

\begin{lemma} 
    \label{lem:shifted vs strongly stable}
Let $U\subset P(m)$ be a monomial order ideal. Then:
\begin{itemize}
\item[(i)] $2\leq d_{\max}(I(U))\leq d_{\max}(U)+1$.
\item[(ii)] $I(U)$ is strongly stable if and only if $U$ is shifted. In this case,
$$
d_{\max}(I(U))=d_{\max}(U)+1.
$$
\end{itemize}
\end{lemma}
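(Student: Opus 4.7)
The plan is to exploit the fact that $U$ and $\Mon(m) \setminus U$ partition $\Mon(m)$, so both parts can be handled by direct contrapositive arguments, using only the definitions of shiftedness and strong stability. For the lower bound in (i), I would observe that the hypothesis $x_1,\ldots,x_m \in U$ forces every element of $I(U)$, and in particular every minimal generator, to have degree at least $2$; since $U$ is finite, $I(U)$ is nonzero, hence $d_{\max}(I(U)) \geq 2$. For the upper bound I would take any minimal generator $w \in G(I(U))$ of degree $k$, pick a variable $x_i$ dividing $w$, and observe that $w/x_i$ is a proper divisor of $w$. By minimality of $w$, we have $w/x_i \notin I(U)$, i.e.\ $w/x_i \in U$, so $k-1 \leq d_{\max}(U)$.

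For the equivalence in (ii), both directions are immediate contrapositives exploiting that a monomial lies in $U$ precisely when it does not lie in $I(U)$. Assuming $U$ shifted, to show $I(U)$ is strongly stable I would take $x_i v \in I(U)$ and $j < i$; were $x_j v \in U$, applying shiftedness to the variable $x_j$ (whose index is smaller than $i$) would yield $x_i v \in U$, a contradiction, so $x_j v \in I(U)$. Conversely, assuming $I(U)$ strongly stable, for $x_i u \in U$ and $i < j$, the assumption $x_j u \notin U$ would give $x_j u \in I(U)$, and strong stability applied to this generator and the smaller index $i$ would force $x_i u \in I(U)$, again a contradiction.

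For the degree equality in the shifted case I need to exhibit a minimal generator of $I(U)$ of degree $d_{\max}(U)+1$, matching the upper bound from (i). Setting $d := d_{\max}(U)$ and choosing any $u \in U$ of degree $d$, I would iteratively replace each variable of $u$ by $x_m$ using shiftedness, concluding $x_m^d \in U$. Then $x_m^{d+1} \in I(U)$ since its degree exceeds $d_{\max}(U)$, while every proper divisor is a power $x_m^k$ with $k \leq d$, which divides $x_m^d \in U$ and hence lies in $U$ by the order-ideal property. Therefore $x_m^{d+1} \in G(I(U))$, giving $d_{\max}(I(U)) \geq d+1$ and, combined with (i), equality. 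The only genuine pitfall is a bookkeeping one: shifted order ideals move variables to \emph{larger} indices while strongly stable ideals move them to \emph{smaller} indices, and this dual orientation must be tracked carefully; once that is set up, every step reduces to an elementary contrapositive.
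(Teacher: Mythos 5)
Your argument is correct, and in two places it takes a genuinely different (and arguably more direct) route than the paper's proof. For the upper bound $d_{\max}(I(U)) \leq d_{\max}(U)+1$ in (i), the paper observes that $I(U)$ contains \emph{all} monomials of degree exceeding $d_{\max}(U)$, so any minimal generator of degree $\geq d_{\max}(U)+2$ would be divisible by a monomial of degree $d_{\max}(U)+1$ already in the ideal; you instead argue from minimality of a generator $w$ that $w/x_i \in U$, which gives the bound with one stroke. For the degree equality in (ii), the paper argues by contradiction: if $d_{\max}(I(U)) \leq d$, then $x_m^{\ell}$ is a minimal generator for some $\ell \leq d$, and strong stability forces \emph{all} degree-$\ell$ monomials into $I(U)$, whence $d_{\max}(U) < \ell$, a contradiction. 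You instead exhibit $x_m^{d+1}$ directly as a minimal generator by showing $x_m^d \in U$ (via repeated shift moves) and then noting every proper divisor of $x_m^{d+1}$ is a power $x_m^k$ with $k \leq d$, hence in $U$. This constructive approach is cleaner and avoids the indirect appeal to strong stability closing $I(U)$ under shifts downward. The two proofs of the equivalence in (ii) and the lower bound in (i) are essentially the same contrapositive argument in both cases.
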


\begin{proof}
(i) directly follows from the discussion preceding this lemma.

We now show (ii). Assume that $I(U)$ is strongly stable. Let $x_iu\in U$ and $i<j\leq m$. If $x_ju\notin U$, then $x_ju\in I(U)$ and, as $I(U)$ is strongly stable, we would also have $x_iu\in I(U)$, which is a contradiction. Hence, $U$ is shifted. The other direction follows from the same arguments. 

To show $d_{\max}(I(U))=d_{\max}(U)+1$, by (i), it suffices to prove that $d_{\max}(I(U))\geq d_{\max}(U)+1$. Let $d_{\max}(U)=d$. As $I(U)=\Mon(m)\setminus U$ and $U\subset \Mon_{\leq d}(m)$, the ideal $I(U)$ contains all monomials in $P(m)$ of degree $d+1$. In particular, we have $x_m^{d+1}\in I(U)$. Now assume that $d_{\max}(I(U))\leq d$. As $x_m^{d+1}$ has to be divisible by a minimal generator of $I(U)$, we deduce that $x_m^\ell\in G(I(U))$ for some $0\leq \ell\leq d$. As $I(U)$ is strongly stable, this forces $I(U)$ to contain all monomials in $P(m)$ of degree $\ell$. By definition of $I(U)$ it follows that $U\subset \Mon_{\ell-1}(m)$, which implies $d_{\max}(U)< \ell\leq d$. We hence arrive at a contradiction and the claim follows.
\end{proof}

The next example shows that in (i) $d_{\max}(I(U))$ can indeed take any possible value.
\begin{example}
Let $1\leq \ell\leq m$ and let $U^{(\ell)}\subset P(m)$ be the union of  all monomials of degree $\leq \ell$ and all squarefree monomials in $P(m)$. It is straightforward to check that $U^{(\ell)}$ is an order ideal, which is not shifted, and that $d_{\max}(U^{(\ell)})=m$. 
The ideal $I(U^{(\ell)})$ to $U^{(\ell)}$ consists of all non-squarefree monomials of degree at least $\ell+1$. 
As every non-squarefree monomial of degree $\ell+2$ is divisible by a non-squarefree monomial of degree $\ell+1$, we have
$$
G(I(U^{(\ell)}))=\{ u\in \Mon_{\ell+1}(m)~|~u\mbox{ is not squarefree}\},
$$
from which it follows that $d_{\max}(I(U^{(\ell)})=\ell+1$.
\end{example}

\Cref{def:I(U)} has an algebraic interpretation using Macaulay-Matlis duality. To this end let $R(m) = \K[y_1,\ldots,y_m]$ be a polynomial ring in dual variables $y_1,\ldots,y_m$. 
For simplicity, assume temporarily that $\K$ has characteristic zero. Interpreting the elements of $R(m)$ as differential operators, there are products $[R(m)]_i \times [P(m)]_j \to [P(m)]_{j-i},\ (f, g) \mapsto f \cdot g$, that turn $P(m)$ into a graded $R(m)$-module. (For a construction in positive characteristic, see \cite{IK}.) For  a homogeneous ideal $I \subset R(m)$,   we define its \emph{inverse system $I^\perp$} as 
\[
I^\perp = \{ g \in P(m) \; | \; f \cdot g = 0 \text{ for all } f \in I\}. 
\]
This is a graded $R(m)$-submodule of $P(m)$. Conversely, if $M$ is a graded $R(m)$-submodule of $P(m)$, then we define its \emph{annihilator} $\Ann (M)$ as 
\[
\Ann (M) = \{ f \in R(m) \; | \; f \cdot g \text{ for all } g \in M\}. 
\]
$\Ann(M)$ is a homogeneous ideal of $R(m)$. In fact, both constructions are inverse to each other (see, e.g., \cite{IK}). 

\begin{theorem}
    \label{thm:Macaulay duality} 
There is a bijection between the set of homogeneous ideals $I   \subset R(m)$ and the set of graded $R(m)$-submodules $M$ of $P(m)$ induced by $I \mapsto I^\perp$ and $M  \mapsto \Ann (M)$.   Moreover, $I^\perp$ is a finitely generated $R(m)$-module if and only of $R(m)/I$ is Artinian. 
\end{theorem}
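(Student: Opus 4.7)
The plan is to exploit the natural perfect pairing between graded components of $R(m)$ and $P(m)$ produced by the contraction action, and to reduce all statements to an inspection in each fixed degree. First, I would verify that the pairing $[R(m)]_i \times [P(m)]_i \to \K$, $(f,g)\mapsto f \cdot g$, is perfect, i.e., induces $\K$-linear isomorphisms $[P(m)]_i \iso \Hom_{\K}([R(m)]_i, \K)$ and $[R(m)]_i \iso \Hom_{\K}([P(m)]_i,\K)$. In characteristic zero this is the standard apolarity pairing on monomials, while in positive characteristic one uses the divided-power formulation of \cite{IK}; in either case the dual basis to the monomials of $R(m)$ of degree $i$ is given by the monomials of $P(m)$ of the same degree. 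From the module axioms for the contraction action it is immediate that $I^\perp = \bigoplus_i I^\perp \cap [P(m)]_i$ is a graded $R(m)$-submodule of $P(m)$ for any homogeneous ideal $I$, and that $\Ann(M)$ is a homogeneous ideal of $R(m)$ for any graded $R(m)$-submodule $M$ of $P(m)$.

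Next I would establish that the two assignments are mutually inverse. The easy inclusions $I \subseteq \Ann(I^\perp)$ and $M \subseteq (\Ann(M))^\perp$ follow directly from the definitions. For the reverse inclusion $\Ann(I^\perp) \subseteq I$, I would fix a degree $i$ and a homogeneous $f \in [R(m)]_i \setminus [I]_i$, and produce a witness $g \in I^\perp$ with $f \cdot g \neq 0$. Concretely, using the perfect pairing in degree $i$, the dual of the surjection $[R(m)]_i \twoheadrightarrow [R(m)/I]_i$ is an injection of $[R(m)/I]_i^\vee$ into $[P(m)]_i$; its image is exactly $I^\perp \cap [P(m)]_i$, and since the image of $f$ in $[R(m)/I]_i$ is nonzero, there is an element in this image on which $f$ acts nontrivially. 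The reverse inclusion $(\Ann(M))^\perp \subseteq M$ is handled symmetrically: for $g \in [P(m)]_i \setminus [M]_i$, the quotient $[P(m)]_i/[M]_i$ is a nonzero $\K$-vector space, and the perfect pairing produces an $f \in [R(m)]_i$ that kills $[M]_i$ (hence lies in every graded piece of $\Ann(M)$ relevant to $g$) but satisfies $f \cdot g \neq 0$. One has to be mildly careful here: to conclude $f \in \Ann(M)$ one must ensure that $f$ kills $[M]_j$ for all $j \ge i$, not just $j = i$, which I would do by iterating the construction or (more cleanly) by noting that $M$ being a submodule means $[M]_j$ is determined by the image of $M$ acted on by degree-$(j-i)$ elements, so annihilation in one degree propagates.

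For the Artinian clause, the key is that under the perfect pairing $[R(m)/I]_i$ is naturally dual to $[I^\perp]_i$, so $\dim_{\K}[R(m)/I]_i = \dim_{\K}[I^\perp]_i$ for every $i$. If $R(m)/I$ is Artinian, it is finite-dimensional over $\K$, so $I^\perp$ is finite-dimensional, hence certainly finitely generated as an $R(m)$-module. Conversely, if $I^\perp$ is finitely generated, its generators live in finitely many degrees; since $P(m)$ is concentrated in non-negative degrees and the action lowers degree, $I^\perp$ is bounded above in degree. Thus $[I^\perp]_j = 0$ for $j \gg 0$, so $[R(m)/I]_j = 0$ for $j \gg 0$, so $R(m)/I$ is finite-dimensional and, as a Noetherian ring of finite $\K$-dimension, Artinian.

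The main obstacle is really the positive-characteristic case, where the naive interpretation of $R(m)$ as differential operators on $P(m)$ breaks down (mixed partials acquire unwanted factorials that can vanish modulo $p$); the clean fix is to replace $P(m)$ by the divided power algebra on the dual variables and to use the contraction action, as in \cite{IK}. Once this technical setup is in place, every step above is an essentially formal consequence of the degree-wise perfect pairing, and the entire proof is a structured verification rather than a calculation.
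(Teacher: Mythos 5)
The paper does not prove Theorem~\ref{thm:Macaulay duality}; it is stated as a known result and cited to \cite{IK}, so there is no in-paper proof to compare against. Your proposal is a correct and essentially standard proof by reduction to the degree-wise perfect pairing $[R(m)]_i \times [P(m)]_i \to \K$. The two non-trivial inclusions $\Ann(I^\perp)\subseteq I$ and $(\Ann M)^\perp\subseteq M$ both come down to producing a witness in a fixed degree $i$ via the pairing and then observing that the ideal/submodule axiom plus perfectness of the pairing propagate annihilation across degrees; your argument for this is correct, though the phrase ``$[M]_j$ is determined by the image of $M$ acted on by degree-$(j-i)$ elements'' is a slight overstatement. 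What you actually use is only the containment $[R(m)]_{j-i}\cdot[M]_j\subseteq [M]_i$ together with the fact that an element of $[P(m)]_{j-i}$ killed by all of $[R(m)]_{j-i}$ is zero, and likewise for ideals the containment $[R(m)]_{i-j}\cdot[I]_j\subseteq [I]_i$; it would be worth stating this explicitly, since the symmetric ``propagation'' step is the only place where a careless reading could leave a gap. Your handling of the Artinian clause (degree bound on a finitely generated submodule under a degree-lowering action, plus $\dim_\K[R(m)/I]_i=\dim_\K[I^\perp]_i$) and your remark that positive characteristic requires the divided-power/contraction setup of \cite{IK} are both correct and match how the paper frames the issue.
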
 

Since, for every integer $j$,   the paring $[R(m)]_j \times [P(m)]_j \to \K$ is exact one has $\dim_\K [R(m)/I]_j = \dim_\K [I^\perp]_j$. 

If  $I$ is a monomial ideal, then its inverse system $I^\perp$ has a $\K$-basis consisting of monomials, and this basis is an order ideal of $P(m)$. 
We now simplify notation by identifying $R(m)$ and $P(m)$. Thus, the inverse system $I^{\perp}$ of a monomial ideal $I \subset P(m)$ is generated as $\K$-vector space by the monomial order ideal 
 $\Mon (P(m)) \setminus I$. We set $U(I):=\Mon (m) \setminus I$. It follows that $\dim_\K [P(m)/I]_j = \# [U(I)]_j$.

The following observation relates these considerations to Definition~\ref{def:I(U)}. 

\begin{lemma}
If $U$ is an order ideal of $P(m)$, then $I(U) = \Ann (U)$. 
\end{lemma}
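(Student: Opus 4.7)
The plan is to deduce the identity $I(U) = \Ann(U)$ from the Macaulay--Matlis duality recorded in Theorem~\ref{thm:Macaulay duality}. Since the correspondences $I \mapsto I^\perp$ and $M \mapsto \Ann(M)$ are mutually inverse, the statement is equivalent to verifying that $I(U)^\perp$ equals the $\K$-span of $U$. I would therefore reduce to a direct computation of $I(U)^\perp$ on monomials.

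The key fact I would use is that for monomials $u, v \in P(m)$, the product $u \cdot v$ is a nonzero scalar multiple of $v/u$ when $u$ divides $v$ and is zero otherwise; this holds both for the differentiation action in characteristic zero and for the contraction-type action used in positive characteristic. Since $I(U)$ is generated by the monomials in $\Mon(m)\setminus U$ and the action is $\K$-linear, a monomial $v$ lies in $I(U)^\perp$ if and only if no monomial outside $U$ divides $v$, i.e., if and only if every divisor of $v$ belongs to $U$.

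From here the order-ideal property closes the loop. If $v \in U$, then all divisors of $v$ are in $U$, so $v \in I(U)^\perp$. Conversely, if every divisor of $v$ lies in $U$, then $v$ is one of its own divisors, hence $v \in U$. This identifies the monomials of $I(U)^\perp$ with $U$, giving $I(U)^\perp = \langle U \rangle_\K$; applying the duality then yields $\Ann(U) = \Ann(I(U)^\perp) = I(U)$.

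There is no serious obstacle once one unpacks the action on monomials; the one point worth verifying explicitly is that $U$ genuinely spans a graded $R(m)$-submodule of $P(m)$, so that $\Ann(U)$ is defined in the sense of Theorem~\ref{thm:Macaulay duality}. This is immediate, however: for any monomial $v \in U$ and any variable $x_i$, the element $x_i \cdot v$ is either zero or a scalar multiple of $v/x_i$, which again lies in $U$ by the order-ideal condition. With this in hand the monomial computation above goes through uniformly and settles the lemma.
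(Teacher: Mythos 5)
Your argument is correct, but it runs in the opposite direction from the paper. The paper proves the lemma directly, without invoking Theorem~\ref{thm:Macaulay duality}: it checks which monomials $x^a$ annihilate every monomial of $U$, observes this is exactly the condition that $x^a$ divides no monomial in $U$, and uses the order-ideal property to translate that into $x^a \notin U$, so $\Ann(U)$ is generated by $\Mon(m)\setminus U$, i.e.\ equals $I(U)$. You instead compute $I(U)^\perp$ first, identify its monomial basis with $U$, and then appeal to the duality bijection to invert back to $\Ann(U)=I(U)$. The underlying divisibility computation is the same, but you establish the corollary following the lemma in the paper (that $U = \Mon(m)\setminus I(U)$, equivalently $I(U)^\perp = \langle U\rangle_\K$) \emph{before} the lemma and then deduce the lemma from it, whereas the paper goes the other way. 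Your route buys a clean conceptual picture (``$I(U)$ and $\langle U\rangle_\K$ are exactly Macaulay dual'') but at the cost of an extra dependence on Theorem~\ref{thm:Macaulay duality}; the paper's direct computation of $\Ann(U)$ is shorter and self-contained. One point you flag that is worth keeping explicit: $\langle U\rangle_\K$ really is a graded $R(m)$-submodule, so that $\Ann(U)$ is defined in the sense of the theorem; the paper leaves this implicit but your verification (closure under the variable contractions) is the right justification.
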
 

\begin{proof}
Consider any monomial $x^a=x_1^{a_1}\cdots x_m^{a_m} \in P(m)$. Then $x^a$ is in $\Ann (U)$ if and only if, for every monomial $x^b=x_1^{b_1}\cdots x_m^{b_m} \in U$, there is some exponent $a_i$ with $a_i > b_i$, that is, $x^a$ does not divide any monomial in $U$. Since $U$ is a monomial order ideal, the latter is equivalent to $x^a \notin U$. 
\end{proof} 

\begin{corollary}
If $U$ is an order ideal of $P(m)$, then
 $$U = \Mon (m) \setminus I(U)=U(I(U))$$
  and  $\dim_\K [P(m)/I(U)]_j = \# [U]_j$ for all $j$. 
\end{corollary}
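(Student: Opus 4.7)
The plan is to verify the two assertions in sequence, with the order-ideal property doing the substantive work in the first and the rest following from the setup of Macaulay--Matlis duality already recorded in the text.

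First, I would establish the set-theoretic identity $U = \Mon(m) \setminus I(U)$. The inclusion $\Mon(m) \setminus U \subseteq I(U)$ is immediate, since $I(U)$ is generated (as a monomial ideal) by exactly the monomials in $\Mon(m) \setminus U$. The content lies in the reverse inclusion $U \cap I(U) = \emptyset$. Assume for contradiction that some $w \in U$ also belongs to $I(U)$. Since $I(U)$ is a monomial ideal, $w$ is divisible by some generator $v \in \Mon(m) \setminus U$. Because $U$ is an order ideal, i.e., closed under taking divisors, $v \mid w$ and $w \in U$ would force $v \in U$, contradicting $v \notin U$. Hence no monomial of $U$ lies in $I(U)$, so $U = \Mon(m) \setminus I(U)$, which is $U(I(U))$ by the definition of the operator $U(\cdot)$.

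Second, for the dimension equality, I would simply invoke the fact that for a monomial ideal $I \subset P(m)$ the residue classes of the monomials in $\Mon(m) \setminus I$ form a homogeneous $\K$-basis of $P(m)/I$; this was already used in the paragraph preceding the lemma, where it was noted that $\dim_\K [P(m)/I]_j = \#[U(I)]_j$. Applying this to $I = I(U)$ and combining with the first part $U(I(U)) = U$ gives $\dim_\K [P(m)/I(U)]_j = \#[U]_j$ for every $j$.

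There is essentially no obstacle here: the statement is a formal unwinding of the definitions of $I(U)$ and $U(I)$ together with the identity $I(U) = \Ann(U)$ established in the preceding lemma. The only place where a hypothesis is actually used is in the first part, where closure of $U$ under divisors is exactly what prevents a monomial of $U$ from being a multiple of a generator of $I(U)$.
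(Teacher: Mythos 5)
Your proof is correct. The paper does not write out an explicit argument for this corollary; it places it immediately after the lemma $I(U) = \Ann(U)$ and after Theorem~\ref{thm:Macaulay duality}, and the intended route is evidently to deduce $U = U(I(U))$ from the fact that $\Ann(\cdot)$ and $(\cdot)^\perp$ are mutually inverse, so that $\mathrm{span}_\K U = I(U)^\perp$, whose monomial basis is $\Mon(m)\setminus I(U)$ by the discussion just above. You bypass that machinery entirely: you prove the set identity $U = \Mon(m)\setminus I(U)$ directly from the definition of $I(U)$ and the closure of $U$ under divisors, and then cite the stated fact $\dim_\K[P(m)/I]_j = \#[U(I)]_j$ for the dimension count. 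Both arguments are valid; yours is more elementary and self-contained (you do not even need the preceding lemma $I(U)=\Ann(U)$), while the paper's route is meant to exhibit the corollary as a piece of the Macaulay--Matlis duality dictionary being set up. As a minor remark, your closing sentence invoking $I(U)=\Ann(U)$ is contextual color rather than a logical step in your argument, which stands without it.
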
 

\section{Squeezed complexes}
    \label{sec:squeezed}

In this section, given a shifted order ideal $U\subset P(m)$, we associate to it a family of simplicial complexes $(\Delta_t(U))_{t\geq 0}$, that will be called \emph{squeezed complexes} of index $d_{\max}(U)-t$. For the case $t\geq d_{\max}(U)$, we obtain squeezed balls, as introduced by Kalai \cite{Ka} and further studied by Murai \cite{Mu-07,Mu-10}. We will be interested in combinatorial properties of those complexes in \Cref{subsect:Comb squeezed} and study their Stanley-Reisner ideals in \Cref{subsect:squeezed and stable}.

\subsection{Combinatorics of squeezed complexes}\label{subsect:Comb squeezed}
In order to define squeezed complexes, we need to introduce several further notions. 

Let $t\geq 0$ be a non-negative integer and let $U\subset P(m)$ be an order ideal. Let $d_{\max }:=d_{\max}(U)$ be the maximal degree of a monomial in  $U$. 
For each $u=x_{i_1}\cdots x_{i_s}\in U$ with $i_1\leq \cdots \leq i_s$, we define a $(d_{\max}+t)$-subset $F_t(u)\subseteq [m+t+d_{\max}]$:
\begin{itemize} 
\item[$\triangleright$] For $s\leq t$, we set
\begin{equation}\label{eq:s<t}
F_t(u):=\bigcup_{\ell=1}^s\{i_\ell+2(\ell-1),i_\ell+2\ell-1\}\cup [m+2s+1,m+t+d_{\max}].
\end{equation}
\item[$\triangleright$] For $s \ge t$, we set
\begin{equation}\label{eq:s>t}
 F_t(u):=\bigcup_{\ell=1}^t\{i_\ell+2(\ell-1),i_\ell+2\ell-1\}\cup \bigcup_{\ell=t+1}^s\{i_\ell+\ell+t-1\}\cup [m+t+s+1,m+t+d_{\max}].
\end{equation}
\end{itemize}
We note that for $s=t$ \eqref{eq:s<t} and \eqref{eq:s>t} yield the same set. Observe that the set $[m+t+s+1,m+t+d_{\max}]$ is empty if and only if $s = d_{\max}$. Similarly, $ [m+2s+1,m+t+d_{\max}] = \emptyset$ if and only if $s = t = d_{\max}$.

It is easy to check that the above definition gives rise to an injection
 $$
 \Mon_{\leq d_{\max}}(m) \to \{A\subseteq [m+t+d_{\max}]~|~\#A=d_{\max}+t\}: \ u \mapsto F_t (u).$$
  We omit a proof of this fact since this also follows from \Cref{cor:hVector} combined with the relation $f_{d-1}(\Delta)=\sum_{i=0}^d h_i(\Delta)$ for a $(d-1)$-dimensional simplicial complex $\Delta$. 

Using the previous construction, we can provide our main definition:  

\begin{definition} 
     \label{def:squeezed complex}
Let $t\geq 0$ and let $U\subset P(m)$ be a shifted order ideal. Let $\Delta_t(U)$ be the $(d_{\max}(U)+t-1)$-dimensional simplicial complex on vertex set $[m+t+d_{\max}(U)]$ , whose facets are given by $F_t(u)$ for $u\in U$. We call $\Delta_t(U)$ the \emph{squeezed complex of index $d_{\max}(U)-t$} associated to $U$. We also say that $\Delta_t(U)$ is the \emph{$(d_{\max}(U)-t)$-squeezed complex} of $U$.
\end{definition}

\begin{remark}
It is immediate from the construction that the squeezed complexes of \emph{non-positive index} (i.e., $t\geq d_{\max}(U)$) are exactly the squeezed balls, introduced by Kalai \cite{Ka}. This also motivates the naming of those complexes. The larger the index of a squeezed complex, the smaller its dimension and its vertex set and, intuitively, the more squeezed the complex is. 
\end{remark}

Though, a priori, we associate an infinite family of simplicial complexes to a shifted order ideal, it will follow from the next lemma, that in order to understand the whole family it suffices to understand only finitely many of its members. 
\begin{lemma}
Let $U\subset P(m)$ be a shifted order ideal and let $t> d_{\max}(U)$. Then
$$
\Delta_t(U)=\{m+t+d_{\max}(U)+1\}\ast \Delta_{t-1}(U).
$$
In particular, $\Delta_t(U)$ is the join of $\Delta_{d_{\max(U)}}$ with the $(t-d_{\max}(U)-1) $-simplex on vertex set $\{m+2d_{\max}(U)+1,\ldots,m+t+d_{\max}(U)\}$. 
\end{lemma}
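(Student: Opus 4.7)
The plan is to show directly that every facet of $\Delta_t(U)$ contains a common new vertex which, when removed, yields a facet of $\Delta_{t-1}(U)$. This exhibits $\Delta_t(U)$ as a cone over $\Delta_{t-1}(U)$ with that vertex as apex, after which the second assertion follows by iteration.

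First, I would exploit the hypothesis $t > d_{\max}(U)$ to eliminate the second branch of the definition. Any monomial $u = x_{i_1}\cdots x_{i_s} \in U$ has $s \leq d_{\max}(U) < t$, so in particular $s \leq t-1 < t$. Consequently, formula \eqref{eq:s<t} governs both $F_t(u)$ (with parameter $t$) and $F_{t-1}(u)$ (with parameter $t-1$); the second-branch formula \eqref{eq:s>t} plays no role at all, which removes the only potential source of case analysis.

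Next, I would compare the two expressions term by term. The pair-of-vertices contributions $\{i_\ell+2(\ell-1), i_\ell+2\ell-1\}$ coincide for $\ell = 1,\ldots, s$, while the trailing intervals differ only in their upper endpoint: for $F_t(u)$ it is $m+t+d_{\max}(U)$ and for $F_{t-1}(u)$ it is $m+t-1+d_{\max}(U)$. Hence
$$
F_t(u) \;=\; F_{t-1}(u) \,\cup\, \{m+t+d_{\max}(U)\}.
$$
Since the adjoined vertex is the same for every $u \in U$ (and lies outside $[m+t-1+d_{\max}(U)]$, hence is genuinely new), this yields the claimed cone decomposition.

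For the ``in particular'' statement, I would iterate the identity: the hypothesis $t' > d_{\max}(U)$ persists for every $t' \in \{d_{\max}(U)+1,\ldots,t\}$, so successive application gives
$$
\Delta_t(U) \;=\; \{m+t+d_{\max}(U)\} \ast \{m+t-1+d_{\max}(U)\} \ast \cdots \ast \{m+2d_{\max}(U)+1\} \ast \Delta_{d_{\max}(U)}(U),
$$
and the joined singletons amount to the full $(t-d_{\max}(U)-1)$-simplex on $\{m+2d_{\max}(U)+1,\ldots, m+t+d_{\max}(U)\}$. There is no substantive obstacle here: the argument is a purely set-theoretic comparison of the two explicit formulas, and the only bookkeeping is to confirm that the strict inequality $t > d_{\max}(U)$ prevents the case split in the definition of $F_t(u)$ from ever engaging.
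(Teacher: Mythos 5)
Your argument is correct and is essentially the same as the paper's: both exploit the fact that $t > d_{\max}(U)$ forces $s \le t-1$, so only formula \eqref{eq:s<t} is ever invoked, and then a term-by-term comparison of the trailing intervals gives $F_t(u) = F_{t-1}(u) \cup \{m+t+d_{\max}(U)\}$. One small remark: you have silently (and correctly) fixed a typo in the lemma's statement — the apex of the cone must be $m+t+d_{\max}(U)$, the largest element of the vertex set $[m+t+d_{\max}(U)]$ of $\Delta_t(U)$, not $m+t+d_{\max}(U)+1$, which is not a vertex of $\Delta_t(U)$ at all; your derivation produces the right vertex, consistent with the iterated join having vertex set $\{m+2d_{\max}(U)+1,\ldots,m+t+d_{\max}(U)\}$ as claimed in the ``in particular'' part.
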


\begin{proof}
First observe, that, since $t>d_{\max}(U)$, we use \eqref{eq:s<t} for the construction of $F_t(u)$ for any $u\in U$. The claim then directly follows since $[m+2d_{\max}(U)+1,m+t+d_{\max}(U)]\subseteq F_t(u)$ for any $u\in U$.
\end{proof}

\begin{remark}
By the previous lemma, the family of squeezed complexes $(\Delta_t(U))_{t\geq 0}$ associated to a shifted order ideal $U\subset P(m)$, is completely determined by those squeezed complexes $\Delta_t(U)$ with $0\leq t\leq d_{\max}(U)$, i.e., those with non-negative index. In particular, given $U$, its squeezed complexes, in the sense of Kalai, only differ in additional cone points. In this sense, any shifted order ideal $U$ has a \emph{unique} minimal squeezed complex, in the sense of Kalai, namely $\Delta_{d_{\max} (U)} (U)$. 
\end{remark}

Let us consider some extremal cases. 

\begin{example}
    \label{exa:m=1}
(i) Any order ideal $U \subset P(1)$ with at least two elements is shifted. Setting $d = d_{\max} (U)$, the $(d-t)$-squeezed complex $\Delta_t (U)$ with $0 \le t \le d$ is the join of the $(t-1)$-simplex on the vertex set $\{2, 4,\ldots,2t\}$ and the boundary of the $d$-simplex on the vertex set $[d+t+1] \setminus \{2, 4,\ldots,2t\}$. Note that for $t=0$, $\{2, 4,\ldots,2t\}=\emptyset$ and one thus only gets the boundary of the $d$-simplex Indeed, one computes 
\[
F_t (x_1^s) = \begin{cases}
[2s] \cup [2s+2, d+t+1], & \text{ if } s \le t \\
[t+s] \cup [t+s+2, d+t+1], & \text{ if } s \ge t. 
\end{cases}
\]
It follows that $\Delta_t (U)$ is a $(d+t-1)$-dimensional pseudomanifold. 
Since $\Delta_t(U)$ is also shellable (see Theorem~\ref{thm:vertexdecomposable} below), it is even a shellable ball or sphere  by Lemma~\ref{lem:Kalai criterium}. Looking at the homology of $\Delta_t(U)$, we see that it is a sphere if and only if $t=0$ and a ball otherwise.

(ii) Consider the shifted order ideal  $U = \{1, x_1,\ldots,x_m\} \subset P(m)$. Then $\Delta_0 (U)$ is a $0$-dimensional simplicial complex with $m+1$ vertices, whereas $\Delta_1 (U)$ is a $1$-dimensional ball on $[m+2]$. Its facets are $\{i, i+1\}$ for  $i = 1,2,\ldots,m+1$. 
\end{example}

It was shown in \cite{Ka} that Kalai's squeezed complexes are vertex decomposable and that their $h$-vectors can be read off directly from the underlying order ideal. In the following, we will show that both properties remain true for the more general $t$-squeezed complexes. 

We first need some preparations:

\begin{lemma}\label{lemma:link}
Let $U\subset P(m)$ be a shifted order ideal and let $\widetilde{U}=\{u\in U~|~x_1\cdot u\in U\}$. Then:
\begin{itemize}
\item[(i)] $\link_{\Delta_0(U)}(1)=\langle F_0(u)+1~|~u\in \widetilde{U}\rangle$. In particular, $\link_{\Delta_0(U)}(1)$ is isomorphic to $\Delta_0(\widetilde{U})$. 
\item[(ii)] If $1\leq t\leq d_{\max}(U)$, then
$$
\link_{\Delta_t(U)}(1)=\{2\}\ast\langle F_{t-1}(u)+2~|~u\in \widetilde{U}\rangle.
$$
In particular, $\link_{\Delta_t(U)}(1)$ is isomorphic to a cone over the squeezed complex $\Delta_{t-1}(\widetilde{U})$. 
\end{itemize}
\end{lemma}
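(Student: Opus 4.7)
The plan is to determine explicitly which facets of $\Delta_t(U)$ contain the vertex $1$, and then compare the facet-complements with the defining facets of $\Delta_{t-1}(\widetilde U)$ for part (ii), respectively $\Delta_0(\widetilde U)$ for part (i). For any monomial $u=x_{i_1}\cdots x_{i_s}\in U$ with $i_1\le\cdots\le i_s$ and $s\ge 1$, inspection of both \eqref{eq:s<t} and \eqref{eq:s>t} shows that the smallest element of $F_t(u)$ is $i_1$; moreover, for $u=1$ one has $F_t(1)=[m+1,m+t+d_{\max}(U)]$, which avoids $1$ since $m\ge 1$. Hence $1\in F_t(u)$ if and only if $x_1\mid u$, and writing each such $u$ uniquely as $u=x_1 v$ with $v\in\widetilde U$ identifies the facets of $\Delta_t(U)$ through the vertex $1$ as precisely the sets $F_t(x_1 v)$ with $v\in\widetilde U$. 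Consequently $\link_{\Delta_t(U)}(1)$ is generated by $\{F_t(x_1 v)\setminus\{1\}\mid v\in\widetilde U\}$.

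The core computation is therefore $F_t(x_1 v)\setminus\{1\}$. Set $v=x_{j_1}\cdots x_{j_r}$ with $j_1\le\cdots\le j_r$, so that the index sequence of $u=x_1 v$ is $i_1=1$, $i_\ell=j_{\ell-1}$ for $\ell\ge 2$, and $s=r+1$. Substituting $i_1=1$ in \eqref{eq:s<t} or \eqref{eq:s>t} peels off the $\ell=1$ summand: the pair $\{1,2\}$ if $t\ge 1$, or the singleton $\{1\}$ if $t=0$. Reindexing the remaining summands by $\ell'=\ell-1$ rewrites them entirely in terms of $j_1,\ldots,j_r$. The calculation splits into the two regimes $s\le t$ and $s\ge t$, but in each regime a direct term-by-term comparison shows that what remains (after removing $1$, and for $t\ge 1$ also setting aside $\{2\}$) coincides exactly with $F_{t-1}(v)+2$, respectively $F_0(v)+1$ when $t=0$, where these right-hand sides are the facet sets produced by Definition~\ref{def:squeezed complex} applied to $\widetilde U$ with $d_{\max}$ taken to be $d_{\max}(U)-1$; in particular the trailing interval $[m+t+(r+1)+1,m+t+d_{\max}(U)]$ matches $[m+(t-1)+r+1,m+(t-1)+(d_{\max}(U)-1)]+2$. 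Altogether this yields
\[
F_t(x_1 v)\setminus\{1\}=\begin{cases}F_0(v)+1,& t=0,\\ \{2\}\cup(F_{t-1}(v)+2),& t\ge 1,\end{cases}
\]
from which the two displayed identities for $\link_{\Delta_t(U)}(1)$ follow immediately.

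For the ``in particular'' statements, one observes that the translation by $1$ or by $2$ is an injection on vertex sets and that, viewing $\widetilde U$ as a shifted order ideal with $d_{\max}(\widetilde U)=d_{\max}(U)-1$, the sets $F_{t-1}(v)$ (respectively $F_0(v)$) are exactly the facets of $\Delta_{t-1}(\widetilde U)$ (respectively $\Delta_0(\widetilde U)$). Hence the bracketed complex is isomorphic to $\Delta_{t-1}(\widetilde U)$ or $\Delta_0(\widetilde U)$ via the inverse shift, and joining with $\{2\}$ in part (ii) produces the cone. The main obstacle is the bookkeeping in the reindexing: one must carry both the pairwise and the singleton summands consistently across the regimes $s\le t$ and $s\ge t$, and verify that the trailing intervals align after simultaneously decrementing $t$ and $d_{\max}$. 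Once the indices are lined up, each case reduces to a routine substitution.
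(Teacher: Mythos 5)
Your proposal is correct and takes essentially the same route as the paper: identify the facets through the vertex $1$ as $F_t(x_1 v)$ with $v\in\widetilde U$, observe that for $t\ge 1$ the pair $\{1,2\}$ is always present (coming from the $\ell=1$ term of $F_t(x_1 v)$), and then verify the key identity $F_t(x_1 v)\setminus\{1\}=F_0(v)+1$ for $t=0$ and $F_t(x_1 v)=\{1,2\}\cup(F_{t-1}(v)+2)$ for $t\ge 1$ by reindexing, with $F_{t-1}$ (resp.\ $F_0$) on $\widetilde U$ taken with trailing-interval parameter $d_{\max}(U)-1$. Your proof fills in the bookkeeping that the paper leaves implicit (in particular, that the trailing intervals align once $t$ and $d_{\max}$ are decremented together), but the structure of the argument is the same.
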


\begin{proof}
First note that, since $\Delta_t(U)$ is a pure simplicial complex, so is $\link_{\Delta_t(U)}(1)$. A facet $F_t(u)\in \Delta_t(U)$ contains the vertex $1$ if and only if $x_1$ divides $u$, i.e., $u=x_1\cdot \tilde{u}$ for some $\tilde{u}\in \Mon(m)$. As $U$ is an order ideal, we have $\tilde{u}\in U$. It hence follows that
$$
\link_{\Delta_t(U)}(1)=\langle F_t(x_1\cdot u)\setminus \{1\}~|~ u\in\widetilde{U}\rangle.
$$
For $t=0$ all facets of $\Delta_t(U)$ are computed via \eqref{eq:s>t} and every variable of a monomial $u\in U$ is represented by exactly one vertex in the facet $F_t(u)$. The claim of (i) now follows immediately from \eqref{eq:s>t}. 
For $1\leq t\leq d_{\max}(U)$, the situation is slightly more complicated. Now, depending on the degree of a monomial $u\in U$ either \eqref{eq:s<t} or \eqref{eq:s>t} are used for the construction of $F_t(u)$. In both cases, we have that if $1\in F_t(u)$, then also $2\in F_t(u)$, since the first occurence of the variable $x_1$ in $u$ gives rise to the two vertices $1$ and $2$ in $F_t(u)$. Therefore,
$$
\link_{\Delta_t(U)}(1)=\{2\}\ast \langle F_t(x_1\cdot u)\setminus \{1,2\}~|~ u\in \widetilde{U}\rangle.
$$
As $F_t(x_1\cdot u)=F_{t-1}(u)+2$ for $ u\in \widetilde{U}$, the claim follows.
\end{proof}

We want to remark that the essence of the previous lemma is that~--~up to additional cone points and isomorphism~--~the link of the vertex $1$ in a squeezed complex is again a squeezed complex (of possibly larger index).

A similar statement is true for the deletion of the vertex $1$. 

\begin{lemma}\label{lemma:deletion}
Let $m\geq 2$. Let $U\subset P(m)$ be a shifted order ideal and let $\hat{U}=\{u\in U~|~x_1\not\mid u\}$. Then,
$$
\Delta_t(U)\setminus 1=\langle F_t(u)~|~u\in\hat{U}\rangle.
$$
In particular, $\Delta_t(U)\setminus 1$ is isomorphic to the $(d_{\max}(\hat{U})-t)$-squeezed complex of $\hat{U}$ (considered as order ideal in $\K[x_2,\ldots,x_m]$).
\end{lemma}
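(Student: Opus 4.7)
My plan is to first establish the set-theoretic equality $\Delta_t(U)\setminus 1 = \langle F_t(u) \mid u \in \hat{U} \rangle$ by mutual inclusion, and then to read off the isomorphism with $\Delta_t(\hat{U})$ from a simple relabeling of the vertices.

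For the inclusion $\langle F_t(u) \mid u \in \hat{U}\rangle \subseteq \Delta_t(U)\setminus 1$, it suffices to observe that $1\notin F_t(v)$ for every $v\in \hat{U}$. Indeed, writing $v=x_{i_1}\cdots x_{i_s}$ in sorted order, inspection of \eqref{eq:s<t} and \eqref{eq:s>t} shows that the smallest vertex of $F_t(v)$ equals $i_1\ge 2$ (when $s\ge 1$) or $m+1\ge 3$ (when $s=0$). For the reverse inclusion, I take a face $G$ of $\Delta_t(U)$ with $1\notin G$, pick $u\in U$ with $G\subseteq F_t(u)$, and may assume $x_1\mid u$. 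Writing $u=x_1^a w$ with $w$ not involving $x_1$, I set $v:=x_2^a w$; iterated application of shiftedness (replacing one $x_1$ by $x_2$ at each step) gives $v\in U$, hence $v\in\hat{U}$. The crux is then to verify
\[
F_t(u)\setminus\{1\}\subseteq F_t(v),
\]
which yields $G\subseteq F_t(v)$ with $v\in\hat{U}$.

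I expect this containment to be the main technical step, and I would prove it by direct calculation from \eqref{eq:s<t} and \eqref{eq:s>t}, splitting into the cases $s\le t$, $t<s$ with $a\le t$, and $t<s$ with $a>t$. In each case the contributions to $F_t(u)$ coming from the $a$ initial $x_1$-factors fill the block $[1,2a]$ (or $[1,a+t]$ in the third case), while the corresponding contributions to $F_t(v)$ from the $x_2$-factors fill the shifted block $[2,2a+1]$ (respectively $[2,a+t+1]$); the contributions arising from the variables of $w$ and from the trailing interval $[m+t+s+1,m+t+d_{\max}(U)]$ (or $[m+2s+1,m+t+d_{\max}(U)]$) are identical for $u$ and $v$, since these terms depend only on the indices coming from $w$ and on $s$, not on the first $a$ positions. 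Hence removing the vertex $1$ from $F_t(u)$ produces a subset of $F_t(v)$. The only real obstacle is the bookkeeping, which is routine once the case split is fixed.

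For the isomorphism, I first note that $d_{\max}(\hat{U})=d_{\max}(U)$: any maximal-degree monomial $u=x_1^a w\in U$ yields $x_2^a w\in \hat{U}$ of the same degree by the same iterated shifting. Thus the vertex set $\{2,\ldots,m+t+d_{\max}(U)\}$ of $\Delta_t(U)\setminus 1$ and the vertex set $[(m-1)+t+d_{\max}(\hat{U})]$ of $\Delta_t(\hat{U})$ have the same cardinality, and $\varphi(i):=i-1$ gives a bijection between them. Substituting $i_\ell\mapsto i_\ell-1$ in \eqref{eq:s<t} and \eqref{eq:s>t} shows that $\varphi(F_t^{(U)}(v))$ equals the facet $F_t^{(\hat{U})}(v')$, where $v'\in \K[y_1,\ldots,y_{m-1}]$ denotes the image of $v\in\hat{U}$ under the relabeling $x_j\mapsto y_{j-1}$. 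This identifies $\Delta_t(U)\setminus 1$ with the $(d_{\max}(\hat{U})-t)$-squeezed complex of $\hat{U}$, viewed as an order ideal in $\K[x_2,\ldots,x_m]$.
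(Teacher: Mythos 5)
Your proposal is correct and follows essentially the same route as the paper: both hinge on the observation that for $u=x_1^a w$ with $x_1\nmid w$ and $v=x_2^a w$ one has $F_t(u)\setminus\{1\}\subseteq F_t(v)$, verified by inspecting the blocks produced by the leading $x_1$- versus $x_2$-factors in \eqref{eq:s<t} and \eqref{eq:s>t}. You establish the reverse inclusion directly for every face $G$, whereas the paper phrases the same computation as a purity argument by contradiction, and you additionally spell out the vertex relabeling $i\mapsto i-1$ giving the isomorphism with $\Delta_t(\hat U)$, which the paper leaves implicit.
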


Note that in the above lemma, we have $d_{\max}(U)=d_{\max}(\hat{U})$: Indeed, as $U$ is shifted, we have $x_m^{d_{\max}(U)}\in U$ and hence, as $m\geq 2$, also $x_m^{d_{\max}(U)}\in \hat{U}$. 

\begin{proof} 
If $u\in \hat{U}$, then $x_1\not\mid u$ and hence $1\notin F_t(u)$. This implies $\langle F_t(u)~|~u\in\hat{U}\rangle\subseteq \Delta_t(U)\setminus 1$. \\
For the reverse inclusion, let $G\in \Delta_t(U)\setminus 1$ be a facet. If $\dim G=\dim\Delta_t(U)$, then $G$ is also a facet of $\Delta_t(U)$ and hence $G=F_t(u)$ for some $u\in U$. As $1\notin G$, it follows that $x_1\not \mid u$ and hence $u\in \hat{U}$. It now suffices to show that $\Delta_t(U)\setminus 1$ is a pure $\dim\Delta_t(U)$-dimensional simplicial complex or, in other words, that there is no facet $G$ of $\Delta_t(U)\setminus 1$ with $\dim G<\dim\Delta_t(U)$. Assume on the contrary that such a facet $G$ exists. As $G\in \Delta_t(U)$, there exists a facet $F_t(u)\in \Delta_t(U)\setminus \left(\Delta_t(U)\setminus 1\right)$ such that $G\subset F_t(u)$. In particular, we must have $u\notin \hat{U}$. Let $\ell:=\max\{k~|~x_1^k|u\}$, i.e.,
$u=x_1^{\ell}\hat{u}$, where $x_1\not\mid \hat{u}$ and $\hat{u}\in U$, i.e., $\hat{u}\in \hat{U}$. By assumption, we have $\ell\geq 1$. We set 
$v=x_2^{\ell}\hat{u}$. As $U$ is shifted, we conclude that  $v\in \hat{U}$ and thus $F_t(v)\in \Delta_t(U)\setminus 1$. Hence, if we show that $G\subseteq F_t(v)$, we get a contradiction to the maximality of $G$ and the claim will follow. 
If $\ell\leq t$, we have 
$$
F_t(v)=\left(F_t(u)\setminus\{1\}\right)\cup\{2\ell+1\}
$$
and, since $G\subseteq F_t(u)\setminus \{1\}$, we conclude that $G\subseteq F_t(v)$ in this case. If $\ell>t$, then
$$
F_t(v)=\left(F_t(u)\setminus\{1\}\right)\cup\{\ell+t+1\}
$$
and the same argument as before shows $G\subseteq F_t(v)\in \Delta_t(U)\setminus 1$. 
\end{proof}

Combining the previous two lemmas the next statement is almost immediate.

\begin{theorem}
     \label{thm:vertexdecomposable}
Let $U\subset P(m)$ be a shifted order ideal and let $t \ge 0$ be any integer. Then $\Delta_t(U)$ is vertex decomposable with shedding vertex $1$. In particular, $\Delta_t(U)$ is shellable.
\end{theorem}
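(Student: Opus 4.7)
The strategy is to show that vertex $1$ is a shedding vertex by induction on the quantity $n := m + t + d_{\max}(U)$, exploiting the fact that Lemmas~\ref{lemma:link} and \ref{lemma:deletion} identify both $\link_{\Delta_t(U)}(1)$ and $\Delta_t(U)\setminus 1$, up to isomorphism and coning, with squeezed complexes whose associated $n$-value is strictly smaller. Shellability will then follow from the standard implication (Provan--Billera~\cite{PB}) that every vertex decomposable complex is shellable.

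For the base, I would handle $m=1$ directly via Example~\ref{exa:m=1}: there $\Delta_t(U)$ is the join of a $(t-1)$-simplex with the boundary of a $d_{\max}(U)$-simplex, and a quick inspection shows that both $\link_{\Delta_t(U)}(1)$ and $\Delta_t(U)\setminus 1$ are (full) simplices, hence trivially vertex decomposable. This also covers the smallest case $U=\{1,x_1\}$, where $\Delta_0(U)$ is two isolated vertices.

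For the inductive step, assume $m\geq 2$. First I would verify the routine fact that $\widetilde U=\{u\in U : x_1u\in U\}$ and $\hat U=\{u\in U : x_1\nmid u\}$ are again shifted order ideals (in $P(m)$ and in $\K[x_2,\dots,x_m]\cong P(m-1)$, respectively). Lemma~\ref{lemma:deletion} then yields $\Delta_t(U)\setminus 1\cong \Delta_t(\hat U)$, and since $d_{\max}(\hat U)=d_{\max}(U)$ while the number of variables drops by one, its associated quantity is $n-1<n$; the inductive hypothesis delivers vertex decomposability. For the link, Lemma~\ref{lemma:link} gives either $\Delta_0(\widetilde U)$ or a cone $\{2\}*\Delta_{t-1}(\widetilde U)$. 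The key estimate is $d_{\max}(\widetilde U)\leq d_{\max}(U)-1$, which holds because $u\in\widetilde U$ forces $\deg u + 1\leq d_{\max}(U)$. In both sub-cases the associated quantity of the squeezed complex appearing inside is at most $n-1$, so induction applies, and one concludes by the elementary observation that coning over a vertex decomposable complex preserves vertex decomposability.

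The only genuinely delicate point is that Lemma~\ref{lemma:link} as stated requires $t\leq d_{\max}(U)$. For $t>d_{\max}(U)$ I would invoke the earlier observation that $\Delta_t(U)=\{m+t+d_{\max}(U)+1\}*\Delta_{t-1}(U)$: since vertex $1$ lies in $\Delta_{t-1}(U)$, both $\link_{\Delta_t(U)}(1)$ and $\Delta_t(U)\setminus 1$ inherit a cone structure over the corresponding complexes in $\Delta_{t-1}(U)$, and a secondary induction on $t$ reduces this regime to the case $t=d_{\max}(U)$ already handled. No substantial obstacle remains beyond bookkeeping: all geometric content has been absorbed into Lemmas~\ref{lemma:link} and~\ref{lemma:deletion}, and the induction on $n=m+t+d_{\max}(U)$ is tailored so that every reduction strictly decreases $n$.
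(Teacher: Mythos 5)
Your proof follows essentially the same route as the paper's: the entire argument rests on Lemmas~\ref{lemma:link} and~\ref{lemma:deletion} and an induction in which both the link and the deletion at vertex~$1$ become squeezed complexes of strictly smaller complexity. You choose the induction variable $n = m + t + d_{\max}(U)$, whereas the paper uses a double induction on $\#U$ and the number of variables. Your variant is valid: the deletion drops $m$ by one while preserving $d_{\max}$, and the key estimate $d_{\max}(\widetilde U) \le d_{\max}(U)-1$ (which you correctly observe follows from $u \in \widetilde U \Rightarrow \deg(x_1 u) \le d_{\max}(U)$) guarantees the link step also decreases $n$. Your explicit handling of the regime $t > d_{\max}(U)$ via the coning lemma is a small, clean replacement for the paper's bare citation to Murai.

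There is one genuine error in your base case. You assert that for $m=1$ both $\link_{\Delta_t(U)}(1)$ and $\Delta_t(U)\setminus 1$ are full simplices. The deletion is indeed a single simplex, namely $\langle F_t(1) \rangle$, but the link is not: since $\Delta_t(U)$ is the join of the $(t-1)$-simplex on $\{2,4,\dots,2t\}$ with the boundary of the $d_{\max}(U)$-simplex on the remaining vertices, and vertex~$1$ sits in the boundary factor, $\link_{\Delta_t(U)}(1)$ is the join of the $(t-1)$-simplex with the boundary of a $(d_{\max}(U)-1)$-simplex. For $d_{\max}(U) \ge 2$ this is not a simplex (already for $U=\{1,x_1,x_1^2\}$, $t=0$, the link at~$1$ is two isolated vertices). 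The conclusion you need---vertex decomposability of the link---is still true (joins with simplices preserve it, and boundary spheres are vertex decomposable), but your stated justification is wrong. The cleanest fix is to notice you don't need a separate treatment of $m=1$ at all: even when $m=1$, Lemma~\ref{lemma:link} identifies the link with $\Delta_0(\widetilde U)$ or a cone over $\Delta_{t-1}(\widetilde U)$, with $n$ strictly smaller, so your induction can continue down to $\widetilde U = \{1\}$; only the deletion needs a direct argument when $m=1$.
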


\begin{proof} 
By \cite{Mu-07} we may assume $0 \le t \le d_{\max} (U)$. 
We use double induction on $\#U$ and the number of variables. By our assumption on $U$, the variables $x_1,\ldots,x_m$ are in $U$, and so $\# U \ge m+1$. If $\# U = m+1$ we are done by Example~\ref{exa:m=1}(ii). This also takes are of the case where $m =0$. 

Let $\# U \ge m+2$.  If $m=1$, then $\Delta_t(U)\setminus 1=\langle F_t(1)\rangle$ (see Example~\ref{exa:m=1}(i)),  and thus it is just a single $\dim\Delta_t(U)$-dimensional simplex. As such it is vertex decomposable. If $m\geq 2$, then, by \Cref{lemma:deletion}, $\Delta_t(U)\setminus 1$ is isomorphic to the squeezed complex $\Delta_t(\hat{U})$, where $\hat{U}=\{u\in U~|~x_1\not\mid u\}$ is a shifted order ideal in variables $x_2,\ldots,x_m$. It follows from the induction hypothesis that $\Delta_t(U)\setminus 1$ is vertex decomposable. By \Cref{lemma:link} $\link_{\Delta_t(U)}(1)$ is isomorphic to $\Delta_0(\widetilde{U})$ and a cone over $\Delta_{t-1}(\widetilde{U})$ if $t=0$ and $t\geq 1$, respectively, where  $\widetilde{U}=\{u\in U~| ~x_1\cdot u  \in U\}$. As $\widetilde{U}$ is a shifted order ideal with $\#\widetilde{U}<\#U$, we can apply the induction hypothesis and as coning preserves vertex decomposability, we infer that $\link_{\Delta_t(U)}(1)$ is vertex decomposable in both cases. The claim follows.
\end{proof}

We can use \Cref{thm:vertexdecomposable} to compute the $h$-vector of $\Delta_t(U)$. 
\begin{corollary}\label{cor:hVector}
Let $U\subset P(m)$ be a shifted order ideal and $0\leq t\leq d_{\max}(U)$.  Then
$$h_i(\Delta_t(U))=\#\{u\in U~|~\deg(u)=i\}.$$
\end{corollary}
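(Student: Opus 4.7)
The plan is to induct on $\#U$, using the vertex decomposability of $\Delta_t(U)$ with shedding vertex $1$ from Theorem~\ref{thm:vertexdecomposable} and the $h$-vector recursion~\eqref{h-vector:vertexdecomposable}. For the base case $U = \{1, x_1, \ldots, x_m\}$ (where $d_{\max}(U) = 1$), one checks directly that $\Delta_0(U)$ is $m+1$ disjoint vertices with $h$-vector $(1, m)$, while $\Delta_1(U)$ is the path on $[m+2]$ with edges $\{i,i+1\}$, whose $h$-vector is $(1, m, 0)$. In both cases this matches $(\#[U]_0, \#[U]_1)$.

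For the inductive step, Theorem~\ref{thm:vertexdecomposable} and~\eqref{h-vector:vertexdecomposable} give
\[
h_i(\Delta_t(U)) = h_i(\Delta_t(U)\setminus 1) + h_{i-1}(\link_{\Delta_t(U)}(1)).
\]
By Lemma~\ref{lemma:deletion}, the first summand equals $h_i(\Delta_t(\hat U))$ where $\hat U = \{u \in U : x_1 \nmid u\}$ is a shifted order ideal in $\K[x_2,\ldots,x_m]$ with $\#\hat U < \#U$ and $d_{\max}(\hat U) = d_{\max}(U)$ (since $x_m^{d_{\max}(U)} \in \hat U$). The inductive hypothesis yields $h_i(\Delta_t(\hat U)) = \#\{u \in U : x_1 \nmid u,\ \deg u = i\}$. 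By Lemma~\ref{lemma:link}, $\link(1)$ is either $\Delta_0(\widetilde U)$ (if $t=0$) or a cone over $\Delta_{t-1}(\widetilde U)$ (if $t\ge 1$), where $\widetilde U = \{u \in U : x_1 u \in U\}$ is shifted with $\#\widetilde U < \#U$ (as $x_m^{d_{\max}(U)} \notin \widetilde U$). Since coning preserves the $h$-vector (the Stanley--Reisner ring is tensored with a polynomial ring), induction gives $h_{i-1}(\link(1)) = \#\{u \in \widetilde U : \deg u = i-1\}$. The map $u \mapsto x_1 u$ is a degree-raising bijection $\widetilde U \to \{v \in U : x_1 \mid v\}$, so this count equals $\#\{v \in U : x_1 \mid v,\ \deg v = i\}$. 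Adding the two contributions yields $\#\{u \in U : \deg u = i\}$, as required.

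The main subtlety is that when $t\ge 1$ the induction needs the formula for $\Delta_{t-1}(\widetilde U)$, yet $t-1$ may well exceed $d_{\max}(\widetilde U)$; for instance, $U = \{1, x_1, x_2, x_2^2\} \subset P(2)$ with $t=2$ produces $\widetilde U = \{1\}$ and $d_{\max}(\widetilde U) = 0 < t-1$. I would circumvent this by first recording that the formula extends to all $t \ge 0$: the join description preceding Example~\ref{exa:m=1} shows that for $t > d_{\max}(U)$ the complex $\Delta_t(U)$ is a cone over $\Delta_{t-1}(U)$ and therefore has the same $h$-vector as $\Delta_{d_{\max}(U)}(U)$. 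With the formula available for arbitrary non-negative indices, the induction closes uniformly.
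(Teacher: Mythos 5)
Your proof is correct and follows the same strategy as the paper: induction on $\#U$ using vertex decomposability with shedding vertex $1$, the $h$-vector recursion~\eqref{h-vector:vertexdecomposable}, and Lemmas~\ref{lemma:link} and~\ref{lemma:deletion}, with the degree-raising bijection $u \mapsto x_1 u$ handling the link contribution. What you add beyond the paper's own argument is the explicit observation that the induction, as stated, does not literally close: the inductive step requires the formula for $\Delta_{t-1}(\widetilde U)$, and your example $U = \{1, x_1, x_2, x_2^2\}$, $t=2$ shows that $t-1$ can genuinely exceed $d_{\max}(\widetilde U)$, so the hypothesis ``$0 \le t \le d_{\max}$'' is not automatically inherited. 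The paper glosses over this; your fix (first extend the claim to all $t \ge 0$ using that $\Delta_t(U)$ is a cone over $\Delta_{t-1}(U)$ when $t > d_{\max}(U)$, and then induct on $\#U$ with $t$ unrestricted) is the right repair and makes the proof strictly more careful than the one in the text.
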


\begin{proof}
We show the statement by induction on $\#U$. If $\#U=m+1$, then Example~\ref{exa:m=1}(ii)) implies the claim. 

If $\#U\geq m+2$, then it follows from \Cref{thm:vertexdecomposable} and \eqref{h-vector:vertexdecomposable} that for $0\leq i\leq d$:
\begin{equation}\label{eq:h sum}
h_i(\Delta_t(U))=h_i(\Delta_t(U)\setminus 1)+h_{i-1}(\link_{\Delta_t(U)}(1)).
\end{equation}
Combining \Cref{lemma:deletion} and the induction hypothesis we obtain
\begin{equation}\label{eq:h deletion}
h_i(\Delta_t(U)\setminus 1)=\#\{u\in U~|~x_1\not \mid u,\;\deg(u)=i\}.
\end{equation}
Similarly, \Cref{lemma:link} allows us to use the induction hypothesis for the link and, using, that coning only appends $0$s to the $h$-vector, we conclude
\begin{equation}\label{eq:h link}
h_{i-1}(\link_{\Delta_t(U)}(1))=
\#\{u\in U~|~x_1\cdot u\in U,\deg(u)=i-1\}=\#\{u\in U~|~ x_1|u,\; \deg(u)=i\}.
\end{equation}
The claim follows from \eqref{eq:h sum}, \eqref{eq:h deletion} and \eqref{eq:h link}.
\end{proof}

\subsection{Squeezed complexes and stable operators}\label{subsect:squeezed and stable} 
We are now going to study the Stanley-Reisner ideals of the squeezed complexes defined in \Cref{subsect:Comb squeezed}. In particular, we will provide an explicit description of the minimal generators, generalizing results from \cite{Mu-07}. The crucial tool will be stable operators, whose definition we now recall. 

In the following, we let $P(\infty)$ be the polynomial ring $\K[x_1,x_2,\ldots]$ in infinitely many variables over the field $\K$  and we write $\Mon(\infty)$ for its set of monomials. 
 Given a map $\tau:\;\Mon(\infty)\to \Mon(\infty)$ we denote by $\tau(I)$ the monomial ideal generated by the set $\tau(G(I))=\{\tau(u)~|~u\in G(I)\}$. Note that one always has $G(\tau(I))\subseteq \tau(G(I))$. For a monomial $u \in P(\infty)$, we set $\max (u) := \max \{i ~|~ x_i \text{ divides } u\}$. If $M$ is a  graded module over a Noetherian polynomial ring $P(m)$ its \emph{graded Betti numbers} are the integers $\beta_{i, j} := \beta_{i, j} (M) = \dim_{\K} [\Tor_i^{P(m)} (M, \K)]_j$, where $i$ and $j$ are any integers. 
 
 For a finitely generated monomial ideal $I \subset P(\infty)$, we put
 \[
 \max(I) := \max \{ \max (u) ~|~ u \in G(I)\}. 
 \]
 Moreover, we write $\beta_{i, j} (I)$ for the graded Betti numbers of $I \cap P(\max(I))$ as a module over $P(\max(I))$. Note that the graded Betti numbers of $I \cap P(m)$ over $P(m)$ are the same for all $m \ge \max(I)$. 

\begin{definition}
A map $\tau:\Mon(\infty)\to \Mon(\infty)$ is called a \emph{stable operator} if the following conditions are satisfied:
\begin{asparaenum}

\item[(a)] If $I\subset P(\infty)$ is a finitely generated strongly stable ideal, then $\beta_{ij}(I)=\beta_{ij}(\tau(I))$ for all $i,j$.
\item[(b)] If $I,J\subset P(\infty)$ are strongly stable ideals with $I\subset J$, then $\tau(I) \subset \tau(J)$.
\end{asparaenum}
\end{definition}
A typical example of a stable operator is given by polarization, sending a monomial $x_{1}^{a_1}x_2^{a_2}\cdot x_{m}^{a_m}\in \Mon(\infty)$ to 
$$
x_{11}x_{12}\cdots x_{1a_1}\cdots x_{m1}\cdots x_{ma_m}\in \K[x_{ij}~|~i,j\in \N]\cong P(\infty).
$$ 
More generally, Murai \cite[Proposition 1.9]{Mu-07} showed the following.

\begin{proposition}
Let $\mathbf{a}=(a_1,a_2,a_3,\ldots)$ be a non-decreasing sequence of non-negative integers with $a_1=0$ 
and let 
$\tau_{\mathbf{a}}:\;\Mon(\infty)\to \Mon(\infty)$ defined by mapping a monomial $x_{i_1}x_{i_2}\cdots x_{i_s}$ with $i_1\leq i_2\leq \cdots \leq i_s$ to $x_{i_1}x_{i_2+a_2}\cdots x_{i_s+a_s}$.
Then $\tau_{\mathbf{a}}$ is a stable operator.
\end{proposition}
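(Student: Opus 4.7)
The plan is to verify the two axioms of a stable operator separately, exploiting the position-wise nature of $\tau_{\mathbf{a}}$: the shift $a_\ell$ applied to the $\ell$-th factor of a sorted factorization depends only on the slot $\ell$, not on the variable index occupying it. In particular, $\tau_{\mathbf{a}}$ is injective on monomials, since the sorted factorization can be uniquely recovered from its image by subtracting the positional shifts.

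For condition (b), let $I\subseteq J$ be strongly stable ideals and fix $u\in G(I)$, written in sorted form as $u=x_{i_1}\cdots x_{i_s}$ with $i_1\leq\cdots\leq i_s$. The key combinatorial fact I would establish is that the \emph{left-aligned prefix} $w=x_{i_1}\cdots x_{i_r}$ belongs to $G(J)$ for some $r\leq s$. Take $r$ minimal with $x_{i_1}\cdots x_{i_r}\in J$. If $w$ were not a minimal generator, then $w/x_{i_k}\in J$ for some $k\leq r$. The case $k=r$ contradicts the minimality of $r$ directly, while for $k<r$ one applies the strong stability of $J$ to replace $x_{i_r}$ by $x_{i_k}$ in $w/x_{i_k}$; the resulting sorted monomial is $x_{i_1}\cdots x_{i_{r-1}}$, again contradicting the choice of $r$. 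Given such $w$, the definition of $\tau_{\mathbf{a}}$ yields $\tau_{\mathbf{a}}(u)=\tau_{\mathbf{a}}(w)\cdot x_{i_{r+1}+a_{r+1}}\cdots x_{i_s+a_s}$, so $\tau_{\mathbf{a}}(w)$ divides $\tau_{\mathbf{a}}(u)$ factor by factor and hence $\tau_{\mathbf{a}}(u)\in\tau_{\mathbf{a}}(J)$.

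For condition (a), I would reduce $\tau_{\mathbf{a}}$ to a composition of elementary shifts. Setting $a_1=0$ and $c_k:=a_k-a_{k-1}\geq 0$ for $k\geq 2$, one has $\tau_{\mathbf{a}}=\sigma_2^{c_2}\circ\sigma_3^{c_3}\circ\cdots$, where $\sigma_k$ corresponds to the sequence with $0$'s in positions $<k$ and $1$'s in positions $\geq k$. Concretely, $\sigma_k$ sends $x_{i_1}\cdots x_{i_s}$ to $x_{i_1}\cdots x_{i_{k-1}}\cdot x_{i_k+1}\cdots x_{i_s+1}$: a ``slot-wise polarization'' that separates consecutive factors from position $k$ onward. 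Combining the Eliahou--Kervaire formula for the strongly stable starting ideal $I$ with a proof that each $\sigma_k$ preserves graded Betti numbers would then yield the equality $\beta_{ij}(I)=\beta_{ij}(\tau_{\mathbf{a}}(I))$. The principal obstacle is that after one application of $\sigma_k$ the image ideal is generally no longer strongly stable, but only satisfies a position-dependent exchange condition. Identifying the appropriate class of monomial ideals that is simultaneously closed under each $\sigma_k$ and admits either a Betti-preserving polarization argument or a transportable Eliahou--Kervaire-type cellular resolution is the core technical challenge; such a class would naturally interpolate between strongly stable and squarefree strongly stable ideals, matching the classical polarization theorem in the extreme case $\mathbf{a}=(0,1,2,\ldots)$.
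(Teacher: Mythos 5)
Your verification of condition (b) is correct and self-contained; the observation that the sorted prefix $x_{i_1}\cdots x_{i_r}$ with $r$ minimal such that it lies in $J$ is in fact a minimal generator of $J$ is a clean argument, and since $\tau_{\mathbf a}$ acts slot-wise on a sorted factorization, $\tau_{\mathbf a}(w)$ is a genuine factor of $\tau_{\mathbf a}(u)$. (Be slightly careful in the case $k<r$ with $i_k=i_r$: there is no nontrivial Borel move, but $w/x_{i_k}=w/x_{i_r}=x_{i_1}\cdots x_{i_{r-1}}$ already gives the contradiction directly.) Note that the paper itself does not supply a proof of this proposition---it is cited verbatim from Murai---so there is no internal argument to compare against.

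The proof of condition (a), however, is not a proof: you yourself flag a ``core technical challenge'' that remains open, namely that after a single elementary shift $\sigma_k$ the image of a strongly stable ideal ceases to be strongly stable, so your intended induction on the factorization $\tau_{\mathbf a}=\sigma_2^{c_2}\circ\sigma_3^{c_3}\circ\cdots$ has no base to stand on after the first step. The factorization identity $\tau_{\mathbf a}\circ\tau_{\mathbf b}=\tau_{\mathbf a+\mathbf b}$ is correct, and the heuristic that $\tau_{\mathbf a}$ interpolates between the identity and squarefree polarization is a good one, but without identifying a class of ideals closed under each $\sigma_k$ with a Betti-number-preserving resolution, the argument does not close. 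The standard way to establish (a) is to bypass the decomposition entirely and show directly that $\tau_{\mathbf a}(I)$ has \emph{linear quotients} in a suitable order of its generators, with $\bigl|(u_1,\ldots,u_{j-1}):u_j\bigr|$ at generator $u_j=\tau_{\mathbf a}(v_j)$ equal to $\max(v_j)-1$; then the linear-quotients Betti number formula reproduces the Eliahou--Kervaire count for $I$, which is exactly condition (a). That colon-ideal computation, not the decomposition into elementary polarizations, is the missing ingredient.
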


The next result from \cite{Mu-07} will be used in \Cref{sect: Alg Prop} to compute the graded Betti numbers of the squeezed complex $\Delta_t(U)$. In the sequel, we denote by $\gin (I)$ the generic initial ideal of a homogeneous ideal $I \subset P(m)$ with respect to the degree reverse lexicographic order. If $I \subset P(\infty)$ is a finitely generated monomial ideal, its generic initial ideal is $\gin (I) = \gin (I \cap P(\max(I))) P(\infty)$. Note that $\gin (I \cap P(m)) P(\infty) =  \gin (I \cap P(\max(I))) P(\infty)$ for every $m \ge \max(I)$. 

\begin{theorem}\cite[Theorem 1.6]{Mu-07}
     \label{Thm:stableOperator}
Let $\tau:\Mon(\infty)\to\Mon(\infty)$ be a stable operator. If $I\subset P(\infty)$ is a finitely generated strongly stable ideal, then $\gin(\tau(I))=I$.
\end{theorem}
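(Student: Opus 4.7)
The plan is to prove $\gin(\tau(I)) = I$ by combining the two defining properties of a stable operator with the structure theory of strongly stable (equivalently Borel-fixed, in characteristic zero) monomial ideals. Set $J := \gin(\tau(I))$. I would first identify invariants common to $I$, $\tau(I)$, and $J$: by axiom (a), the graded Betti numbers of $I$ and $\tau(I)$ agree, so their Hilbert functions coincide, and since taking the generic initial ideal preserves Hilbert functions, $J$ and $I$ share the same Hilbert function. Galligo's theorem gives that $J$ is strongly stable, so $I$ and $J$ are two strongly stable ideals with the same Hilbert function. The Eliahou--Kervaire formula
\[
\beta_{i,i+j}(L) = \sum_{u \in G(L)_j} \binom{\max(u) - 1}{i}
\]
(valid for any strongly stable $L$) then translates Betti numbers into the multiset $\{(\deg u, \max(u)) : u \in G(L)\}$, and the general inequality $\beta_{ij}(\gin(K)) \ge \beta_{ij}(K)$ yields $\beta_{ij}(J) \ge \beta_{ij}(I)$.

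I would then induct on $\#G(I)$. The base case is $I = (x_1^d)$: since $(x_1^d)$ has $\beta_{0,d} = 1$ and all other Betti numbers zero, axiom (a) forces $\tau(I)$ to be principal and generated in degree $d$; the gin of any principal monomial ideal of degree $d$ with respect to degrevlex is $(x_1^d)$, since the initial term of a generic degree-$d$ polynomial is $x_1^d$. For the inductive step, pick a generator $v \in G(I)$ such that $I' := (G(I)\setminus\{v\})$ is still strongly stable (such a $v$ exists, e.g.\ any generator of $I$ that is maximal in the Borel partial order on $G(I)$). Induction gives $\gin(\tau(I')) = I'$, and property (b) yields $\tau(I') \subsetneq \tau(I)$. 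Axiom (a) applied to the pair $I' \subset I$ shows that $\tau(I)$ is obtained from $\tau(I')$ by adding a single new minimal generator in degree $\deg v$, and since $\gin$ respects containment under a common generic coordinate change, this transfers to $I' \subsetneq J$ with $J/I'$ having the same Hilbert function as $I/I'$.

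The main obstacle will be the final matching step: the Hilbert function and Betti number bookkeeping show that $J$ is obtained from $I'$ by adding a single strongly stable generator of known degree and $\max$-value, but pinning down that this generator is \emph{precisely} $v$---and not some other strongly stable monomial sharing the same $(\deg,\max)$ profile---requires additional input. This rigidity should follow from the containment $I' \subset J$ of strongly stable ideals combined with the monotonicity (b) of $\tau$, tracked through the Eliahou--Kervaire decomposition; the delicate point is that (b) is a one-way containment, so ruling out "wrong" candidate generators calls for a careful comparison of which strongly stable ideals can extend $I'$ compatibly with both the Hilbert function and the inclusion arising from applying $\gin$ to the chain $\tau(I') \subset \tau(I)$.
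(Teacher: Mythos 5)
This theorem is cited by the paper from Murai's work and is not proved in the paper itself, so there is no internal proof to compare against; I will assess your proposal on its own terms. Your strategy (Galligo, Hilbert-function bookkeeping, Eliahou--Kervaire, monotonicity of $\tau$, induction on $\#G(I)$) assembles reasonable ingredients, but there is one imprecise claim and, more importantly, a genuine unfilled gap.

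The imprecise claim is ``Axiom (a) applied to the pair $I' \subset I$ shows that $\tau(I)$ is obtained from $\tau(I')$ by adding a single new minimal generator.'' Axiom (a) only says $\tau(I)$ and $\tau(I')$ have the graded Betti numbers of $I$ and $I'$ respectively, hence the right generator counts per degree, and axiom (b) gives $\tau(I') \subsetneq \tau(I)$; but this does not force $G(\tau(I')) \subset G(\tau(I))$ (a strictly larger monomial ideal can have a disjoint generating set, e.g.\ $(x_1^2) \subsetneq (x_1, x_2^2)$). Fortunately you do not actually need this: what you use is $I' = \gin(\tau(I')) \subseteq \gin(\tau(I)) = J$, which does follow because $\gin$ preserves inclusions under a common generic coordinate change.

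The real gap is the final matching step, which you flag but do not close, and which is in fact the entire content of the theorem once the reductions are done. After your reductions one knows: $J := \gin(\tau(I))$ is strongly stable (Galligo), $J \supseteq I'$, $J$ has the same Hilbert function as $I$, and (since regularity can be read off from graded Betti numbers and is preserved by degrevlex gin) $J$ is generated in degrees $\le d := \deg v$. Together with $[J]_k = [I']_k$ for $k < d$ and $\dim_\K [J]_d = \dim_\K [I']_d + 1$, this forces $J = I' + (w)$ for a \emph{unique} degree-$d$ monomial $w$ all of whose Borel moves land in $I'$. The remaining assertion $w = v$ is not established by the tools you list: the inequality $\beta_{ij}(J) \ge \beta_{ij}(\tau(I)) = \beta_{ij}(I)$ is one-sided, so Eliahou--Kervaire gives only lower bounds on the multiset $\{(\deg u, \max u) : u \in G(J)\}$, not the exact generator. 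There can be several degree-$d$ monomials $w \notin I'$ with all Borel moves in $I'$ (take $I' = (x_1^2, x_1x_2)$, where both $x_2^2$ and $x_1x_3$ qualify), so one needs a separate combinatorial lemma that the Hilbert function of $I' + (w)$ distinguishes such $w$. This is plausible and holds in small examples, but it is not a standard fact, is not proved in your proposal, and is precisely where the nontrivial work lies.

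A minor remark on the base case: it is correct, but one should say a word justifying that a one-generator strongly stable ideal is necessarily of the form $(x_1^d)$ (any Borel move of the generator would have to be a multiple of the same degree, hence equal to it, which is impossible unless there are no nontrivial moves).
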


In the following, we will be interested in a specific set of stable operators:

\begin{definition}
     \label{def:map phi}
For a non-negative integer $\ell\in \Z$, define the map $\phi_\ell:\Mon(\infty) \to \Mon (\infty)$ as follows: Given a monomial $u=x_{i_1}x_{i_2}\cdots x_{i_s} \in \Mon(\infty)$ $1 \le i_1 \le i_2 \le \cdots \le i_s$, set 
\begin{equation*}
\phi_{\ell} (u) =
\prod_{k = 1}^{\ell} x_{i_k+2(k-1)}\cdot \prod_{k=\ell+1}^s x_{i_{k} + k + \ell-1} 
\end{equation*} 
\end{definition}
Notice that $\phi_{\ell}$ maps every monomial onto a  monomial of the same degree. In fact, the map $\phi_{\ell}$ is nothing but the stable operator $\tau_{\mathbf{a}(\ell)}$, where $\mathbf{a}(\ell)=(a_1,a_2,a_3,\ldots)$ is given by
\[
a_k=\begin{cases}
2 (k-1),\quad \mbox{if } 1\leq k\leq \ell\\
k+\ell-1,\quad \mbox{if } k\geq \ell+1.
\end{cases}
\]
In particular, for $\ell=0$, the map $\phi_\ell$ is the stable operator corresponding to polarization. 

We record the previous observations in the following lemma:
\begin{lemma}
$\phi_{t}$ is a stable operator. 
Moreover, if $u\in \Mon(\infty)$, then the monomial $\phi_{t} (u)$ is squarefree.
\end{lemma}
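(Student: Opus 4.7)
My plan for this lemma has two parts, mirroring its two assertions.

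For the first assertion, that $\phi_t$ is a stable operator, I would simply verify that the sequence $\mathbf{a}(t) = (a_1, a_2, \ldots)$ defined just before the lemma satisfies the hypotheses of the preceding Proposition, and then invoke it. I need $\mathbf{a}(t)$ to be a non-decreasing sequence of non-negative integers with $a_1 = 0$. The equality $a_1 = 2(1-1) = 0$ is immediate and non-negativity is clear. For monotonicity I would split into three cases according to where the consecutive difference $a_{k+1} - a_k$ sits relative to the boundary index $k = t$. A direct computation gives $a_{k+1} - a_k = 2$ when $k+1 \le t$, $a_{t+1} - a_t = 2t - 2(t-1) = 2$ at the boundary (when $t \ge 1$), and $a_{k+1} - a_k = 1$ when $k \ge t+1$. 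Combined with the identification $\phi_t = \tau_{\mathbf{a}(t)}$ noted in the paragraph preceding the lemma, this yields that $\phi_t$ is a stable operator.

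For the second assertion, I would show that the variable indices appearing in $\phi_t(u)$ are strictly increasing, so in particular no variable can appear twice. Writing $u = x_{i_1} x_{i_2} \cdots x_{i_s}$ with $i_1 \le i_2 \le \cdots \le i_s$, define
\[
j_k = \begin{cases} i_k + 2(k-1) & \text{if } 1 \le k \le t, \\ i_k + k + t - 1 & \text{if } t+1 \le k \le s, \end{cases}
\]
so that $\phi_t(u) = x_{j_1} x_{j_2} \cdots x_{j_s}$. I then split into three cases to show $j_{k+1} > j_k$ for every $1 \le k < s$: (a) if $k+1 \le t$, then $j_{k+1} - j_k = (i_{k+1} - i_k) + 2 \ge 2$; (b) if $k = t$ (so $j_{k+1}$ is defined by the second regime), a direct calculation gives $j_{t+1} - j_t = (i_{t+1} + 2t) - (i_t + 2(t-1)) = (i_{t+1} - i_t) + 2 \ge 2$; (c) if $k \ge t+1$, then $j_{k+1} - j_k = (i_{k+1} - i_k) + 1 \ge 1$. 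In every case the difference is strictly positive, so $\phi_t(u)$ is squarefree.

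Neither part poses any real obstacle, since both reduce to finite case analyses. The only mildly delicate point is making sure the boundary index $k = t$ is treated on its own, and that the edge case $t = 0$ (where the first regime is empty and only case (c) applies) does not get lost; but both observations are routine.
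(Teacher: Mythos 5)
Your proposal is correct and takes essentially the same route as the paper, which records the lemma without a formal proof after observing that $\phi_t = \tau_{\mathbf{a}(t)}$. Your verification that $\mathbf{a}(t)$ is non-decreasing with $a_1 = 0$, together with the separate observation that every consecutive difference $a_{k+1}-a_k$ is at least $1$ (so the indices in $\phi_t(u)$ are strictly increasing, yielding squarefreeness), is exactly the intended argument.
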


The main result of this section shows that we can use the map $\phi_t$ to compute the Stanley-Reisner ideals of the squeezed complexe $\Delta_t(U)$. 
More precisely:

\begin{theorem} 
      \label{thm:SR-ideal}
Let $U\subset P(m)$ be a shifted order ideal. For any integer $t \ge 0$, one has 
$$
I_{\Delta_t(U)}=\phi_t(I(U)).
$$
\end{theorem}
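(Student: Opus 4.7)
The plan is to prove the equality $I_{\Delta_t(U)} = \phi_t(I(U))$ by induction on $\#U$, exploiting the parallel recursive structure that both sides carry with respect to the vertex $1$. The base case $\#U = m + 1$ (i.e., $U = \{1, x_1, \ldots, x_m\}$) reduces to an explicit computation: both $\phi_t(I(U))$ and $I_{\Delta_t(U)}$ are generated by readily describable squarefree monomials that can be matched directly, as already illustrated in \Cref{exa:m=1}.

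For the inductive step with $m \ge 2$, I would decompose $U = \hat{U} \sqcup x_1 \cdot \tilde{U}$ with $\hat{U} = \{u \in U : x_1 \nmid u\}$ and $\tilde{U} = \{u \in U : x_1 \cdot u \in U\}$; both are shifted order ideals of strictly smaller cardinality (with $\hat{U}$ viewed in $\K[x_2, \ldots, x_m]$), so the induction hypothesis applies. Algebraically one has $I(U) = I(\hat{U}) \cdot P(m) + x_1 \cdot I(\tilde{U})$, and a direct calculation from \Cref{def:map phi} yields the key recursion
$$\phi_t(x_1 \cdot w) = x_1 \cdot \sigma_c\bigl(\phi_{t-1}(w)\bigr) \qquad \text{for every } w \in \Mon(m),$$
where $\sigma_c : x_i \mapsto x_{i+c}$ with $c = 2$ when $t \ge 1$ (and $c = 1$, $\phi_{-1} := \phi_0$, when $t = 0$). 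Combining these gives
$$\phi_t(I(U)) = \phi_t(I(\hat{U})) \cdot P + x_1 \cdot \sigma_c\bigl(\phi_{t-1}(I(\tilde{U}))\bigr).$$
On the geometric side, \Cref{lemma:link} and \Cref{lemma:deletion} identify $\Delta_t(U) \setminus 1 \cong \Delta_t(\hat{U})$ and, for $t \ge 1$, $\link_{\Delta_t(U)}(1) \cong \{2\} \ast \Delta_{t-1}(\tilde{U})$ with a uniform shift of $+2$ on vertex labels. The standard decomposition $I_{\Delta_t(U)} = I_{\Delta_t(U)\setminus 1} \cdot P + x_1 \cdot I_{\link_{\Delta_t(U)}(1)} \cdot P$, together with the fact that coning by the free vertex $2$ does not change the Stanley--Reisner ideal, matches the two decompositions term-by-term once the induction hypothesis is applied to $\hat{U}$ and $\tilde{U}$.

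The main obstacle is verifying the recursion $\phi_t(x_1 \cdot w) = x_1 \cdot \sigma_c(\phi_{t-1}(w))$ rigorously. This requires a case analysis splitting the sorted variables of $w = x_{i_1}\cdots x_{i_s}$ into positions $k \le t - 1$ (where $\phi_{t-1}$ uses the shift $2(k-1)$, turning into $2k$ after $x_1$ is inserted at slot $1$) and positions $k \ge t$ (where the shift $k + t - 2$ turns into $(k+1) + t - 1$), with the boundary $k = t$ checked separately; all three cases yield a uniform subscript increase of $2$, confirming $\sigma_2$. A secondary technicality is reconciling the vertex labelings of $\Delta_t(\hat{U})$, which is naturally defined in $m - 1$ variables, with those of $\Delta_t(U) \setminus 1$, but this is a routine relabeling that does not affect the Stanley--Reisner ideal.
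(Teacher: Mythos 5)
Your proposal takes a genuinely different route from the paper. The paper proves the containment $\phi_t(I(U)) \subseteq I_{\Delta_t(U)}$ directly, by a substantial case analysis showing for each minimal generator $u$ of $I(U)$ that the support of $\phi_t(u)$ is a non-face, and then obtains equality by comparing Hilbert functions: using that $\phi_t$ is a stable operator (Theorem~\ref{Thm:stableOperator}) one has $\gin(\phi_t(I(U))) = I(U)$, and Corollary~\ref{cor:hVector} plus a lemma of Murai show $P(m)/(\gin(I_{\Delta_t(U)})\cap P(m))$ has the same Hilbert function as $P(m)/I(U)$. Your approach instead mirrors the vertex-decomposability proof (Theorem~\ref{thm:vertexdecomposable}): induction on $\#U$ via the deletion/link decomposition at the vertex~$1$, driven by the monomial recursion $\phi_t(x_1 w)=x_1\,\sigma_c(\phi_{t-1}(w))$, which is correct and is a nice structural observation that the paper does not record. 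Your approach is more self-contained (it avoids invoking $\gin$ and the stable-operator machinery), while the paper's approach avoids a delicate double induction and the vertex-relabeling bookkeeping.

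There is, however, one genuine unaddressed gap. You pass from $I(U)=I(\hat U)P(m)+x_1 I(\tilde U)$ and the monomial-level recursion directly to $\phi_t(I(U))=\phi_t(I(\hat U))P + x_1\sigma_c(\phi_{t-1}(I(\tilde U)))$, as if $\phi_t$ were additive over this decomposition. It is not: by definition $\phi_t(I)$ is the ideal generated by $\phi_t(G(I))$, and $\phi_t$ does \emph{not} respect divisibility (for instance $x_2\mid x_1x_2$ but $\phi_1(x_2)=x_2\nmid x_1x_4=\phi_1(x_1x_2)$), so the equality of ideals $I(U)=I(\hat U)P+x_1 I(\tilde U)$ says nothing a priori about what $\phi_t$ does. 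What you actually need is the statement about minimal generators,
\[
G(I(U)) \;=\; G(I(\hat U))\ \sqcup\ x_1\, G(I(\tilde U)),
\]
and this is exactly where the hypothesis that $U$ is \emph{shifted} enters (your outline never invokes shiftedness). The nontrivial direction is that $x_1 h\in G(I(U))$ for every $h\in G(I(\tilde U))$: since all proper divisors of $h$ lie in $\tilde U$, in particular $h/x_j\in\tilde U$ for any $x_j\mid h$, hence $x_1(h/x_j)\in U$, and shiftedness then gives $x_j\cdot(h/x_j)=h\in U$, so $x_1 h$ has all its proper divisors in $U$ and is therefore a minimal generator of $I(U)$. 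Without this verification, the central step of your induction is unjustified. A secondary, smaller omission: the case $m=1$ must be handled separately (as the paper does in the proof of Theorem~\ref{thm:vertexdecomposable}), since Lemma~\ref{lemma:deletion} requires $m\ge 2$ and the deletion degenerates to a single simplex.
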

The proof will follow along the same lines as the proof of Proposition 4.1 in \cite{Mu-07}, where the statement is shown if $t \ge d_{\max} (U)$.  We include it in full detail as a service to the reader. 

\begin{proof} 
Let $d:=d_{\max}(U)$ and let $0 \le t \le d_{\max} (U)$.
We first show that $\phi_t(U)\subseteq I_{\Delta_t(U)}$. 
Let $u=x_{i_1}\cdots x_{i_s}\in G(I(U))$ with $1\leq i_1\leq \cdots \leq i_s$. We write $f(u)$ for the support of $\phi_t(u)$, i.e.,
$$
f(u)=\{\ell~|~x_\ell|\phi_t(u)\}.
$$
We need to show that $f(u)\notin \Delta_t(U)$. For this it suffices to show that $f(u)\not\subseteq F_t(w)$ for any $w\in U$. Let $w=x_{j_1}\cdots x_{j_k}\in U$ with $j_1\leq \cdots \leq j_k$. We consider two cases, each having several subcases.\medskip

\noindent
{\sf Case 1:} $k<s$. 
There are three subcases: \smallskip 

\noindent
{\sf Case 1.1:} $k<s\leq t$.\\
We have $\phi_t(u)=\prod_{\ell=1}^sx_{i_\ell+2(\ell-1)}$ and hence 
\begin{equation}\label{eq:nonface}
f(u)=\{i_1,i_2+2,\ldots,i_s+2(s-1)\}.
\end{equation}
 Since $k<t$, it further holds that
\begin{equation}\label{eq:face}
F_t(w)=\{j_1,j_1+1\}\cup\cdots \cup\{j_k+2(k-1),j_k+2k-1\}\cup[m+2k+1,\ldots,m+t+d].
\end{equation}
 We write $F_t^{(\mathrm{in})}(w)$ and $F_t^{(\mathrm{fin})}(w)$ for the initial and final segment of $F_t(w)$, respectively, i.e.,
$$
F_t^{(\mathrm{in})}(w)=\{j_1,j_1+1\}\cup\cdots \cup\{j_k+2(k-1),j_k+2k-1\}
$$ 
and
$$
 F_t^{(\mathrm{fin})}(w)=[m+2k+1,\ldots,m+t+d].
$$
It follows from \eqref{eq:nonface} and the assumption $k<s$ that $\#(f(u)\cap F_t^{(\mathrm{in})}(w))\leq k$. Moreover, as $i_s+2(s-1)\leq m+2(s-1)$ and $\#[m+2k+1,m+2(s-1)]=2(s-k-1)$, we conclude from the form of $f(u)$ (see \eqref{eq:nonface}) that 
$$
\#(f(u)\cap  F_t^{(\mathrm{fin})}(w))\leq s-k-1.
$$
It follows that $\#(f(u)\cap F_t(w))\leq k+(s-k-1)=s-1$ and hence $f(u)\not \subseteq F_t(w)$, as desired.\smallskip 

\noindent
{\sf Case 1.2:} $k\leq t<  s$.\\
Since $s>t$,  we have $\phi_t(u)=\prod_{\ell=1}^tx_{i_\ell+2(\ell-1)}\cdot\prod_{\ell=t+1}^sx_{i_\ell+\ell+t-1}$ and thus 
\begin{equation}\label{eq:nonface:t<s}
f(u)=\{i_1,i_2+2,\ldots,i_t+2(t-1)\}\cup \{i_{t+1}+2t,i_{t+2}+2t+1\ldots,i_s+s+t-1\}.
\end{equation}
We set 
$$
f(u)^{(\mathrm{in})}=\{i_1,i_2+2,\ldots,i_t+2(t-1)\}\quad \mbox{and} \quad f(u)^{(\mathrm{fin})}=\{i_{t+1}+2t,i_{t+2}+2t+1,\ldots,i_s+s+t-1\}.
$$
Since $k<t$, the face $F_t(w)$ is of the form \eqref{eq:face}. As in Case 1.1 it follows that 
$$
\#(f(u)^{(\mathrm{in})}\cap F_t^{(\mathrm{in})}(w))\leq k \quad \mbox{and} \quad \#(f(u)^{(\mathrm{in})}\cap F_t^{(\mathrm{fin})}(w))\leq t-k-1.
$$
Using that $\#f(u)^{(\mathrm{fin})}=s-t$, we conclude $\#(f(u)\cap F_t(w))\leq k+(t-k-1)+s-t=s-1$, which implies $f(u)\not\subseteq F_t(w)$.\smallskip 

\noindent
{\sf Case 1.3:} $t\leq k<s$.\\
Since $s>t$, the support $f(u)$ is of the form \eqref{eq:nonface:t<s}. As $t\leq k$, the face $F_t(w)$ is given by
\begin{align}\label{eq:face:t<=k}
F_t(w)=&\{j_1,j_1+1\}\cup\cdots \cup\{j_t+2(t-1),j_t+2t-1\}\\
&\cup\{j_{t+1}+2t,j_{t+2}+2t+1,\ldots,j_k+k+t-1\}\cup[m+t+k+1,\ldots,m+t+d].\notag
\end{align}
We set 
\begin{align*}
F_t^{(\mathrm{in})} (w) & =\{j_1,j_1+1\}\cup\cdots \cup\{j_t+2(t-1),j_t+2t-1\},\\
F_t^{(\mathrm{mid})}(w) & =\{j_{t+1}+2t,j_{t+2}+2t+1,\ldots,j_k+k+t-1\} \quad\mbox{ and }\\
F_t^{(\mathrm{fin})}(w) &=[m+t+k+1,\ldots,m+t+d]
 \end{align*}
  for the initial, middle and final segment of $F_t(w)$. If $f(u)^{(\mathrm{in})}\not\subseteq F_t(w)$, we clearly have $f(u)\not\subseteq F_t(w)$ and the claim follows. So, assume $f(u)^{(\mathrm{in})}\subseteq F_t(w)$. We then must have $i_p+2(p-1)\geq j_p+2(p-1)$ for $1\leq p\leq t$, i.e., $i_p\geq j_p$ for $1\leq p\leq t$. In particular, $i_{t+1}+2t>i_t+2t-1\geq j_t+2t-1$, which implies $f(u)^{(\mathrm{fin})}\cap F_t^{(\mathrm{in})}(w)=\emptyset$. Moreover, we trivially have $\#(f(u)^{(\mathrm{fin})}\cap F_t^{(\mathrm{mid})}(w))\leq \#(F_t^{(\mathrm{mid})}(w))=k-t$. Since 
$$
i_\ell+\ell+t-1\leq m+\ell+t-1\leq m+t+k
$$
if $\ell\leq k+1$, we infer
$$
\#(f(u)^{\mathrm{fin}}\cap F_t^{(\mathrm{fin})}(w))\leq s-k-1.
$$
Summarizing the previous discussion, we obtain
$$
\#(f(u)\cap F_t(w))=\#(f(u)^{(\mathrm{in})})+\#(f(u)^{(\mathrm{fin})}\cap F_t(w))\leq t+0+(k-t)+(s-k-1)=s-1
$$
and thus $f(u)\not\subseteq F_t(w)$, as desired.\\
This finishes the proof of Case 1.\medskip

\noindent
{\sf Case 2:} $s\leq k$. 
We first consider the following subcase:\smallskip 

\noindent
{\sf Case 2.1:} $s\leq k$ and $s\leq t$.\\
Since $s\leq t$, the support $f(u)$ is of the form \eqref{eq:nonface}. As $s\leq t$, the face $F_t(w)$ contains the set 
$$
\{j_1,j_1+1\}\cup \cdots \cup\{j_s+2(s-1),j_s+2s-1\}
$$
and those are the smallest elements of $F_t(w)$. Assume, by contradiction, that $f(u)\subseteq F_t(w)$. It then follows that $i_\ell+2(\ell-1)\geq j_\ell+2(\ell-1)$, i.e., $i_\ell\geq j_\ell$ for all $1\leq \ell\leq s$. Since $U$ is shifted and $u\notin U$, it follows that $x_{j_1}\cdots x_{j_s}\notin U$. On the other hand, $x_{j_1}\cdots x_{j_s}$ divides $w$. Since $U$ is an order ideal and $w\in U$, it follows that $x_{j_1}\cdots x_{j_s}\in U$, a contradiction.\smallskip 

\noindent
{\sf Case 2.2:} $t<s\leq k$.\\
Since $t<s$ and $t<k$ the sets $f(u)$ and $F_t(w)$ are of the form \eqref{eq:nonface:t<s} and \eqref{eq:face:t<=k}, respectively. Assume that $f(u)\subseteq F_t(w)$. Similarly to the previous case, it follows that 
$i_\ell+2(\ell-1)\geq j_\ell+2(\ell-1)$, i.e., $i_\ell \geq j_\ell$ for $1\leq \ell\leq t$. Moreover, we must have $i_{\ell}+\ell+t-1\geq j_{\ell}+\ell+t-1$, i.e., again $i_\ell\geq j_\ell$ for $t+1\leq \ell \leq s$. Since $U$ is shifted and $u\notin U$, it follows that $x_{j_1}\cdots x_{j_s}\notin U$ Since $x_{j_1}\cdots x_{j_s}$ divides $w$, which lies in $U$, and $U$ is an order ideal, we conclude $x_{j_1}\cdots x_{j_s}\in U$, a contradiction.

It remains to show that $I_{\Delta_t(U)}\subseteq\phi_t(U)$. This will be a consequence of  the fact that $I_{\Delta_t(U)}$ and $\phi_t(U)$ have the same Hilbert function. Let $\delta:=\dim \Delta_t(U)$. It follows from \Cref{thm:vertexdecomposable} and \cite[Lemma 3.2]{Mu-07} that 
\[
\dim_{\mathbb{K}}\left(P(m)/(\mathrm{gin}(I_{\Delta_t(U)})\cap P(m))\right)_i= \begin{cases} 
h_i(\Delta_t(U)) & \text{ if } 0\leq i\leq \delta+1 \\
0 & \text{ if } i>\delta+1. 
\end{cases}
\]
By \Cref{cor:hVector}, we have
$$
h_i(\Delta_t(U))= 
\#\{u\in U~|~\deg(u)=i\}.
$$
The latter is also the Hilbert function of $P(m)/(I(U)\cap P(m))$ in degree $i$,  which implies that $I(U)\cap P(m)$ and $\mathrm{gin}(I_{\Delta_t(U)})\cap P(m)$ have the same Hilbert function. Since none of the minimal generators of neither $I(U)$ nor $\mathrm{gin}(I_{\Delta_t(U)})$ is divisible by $x_\ell$ for $\ell>m$ (see \cite[Lemma 3.2]{Mu-07}), it follows that $I(U)$ and $\mathrm{gin}(I_{\Delta_t(U)})$ have the same Hilbert function (considered in $P(m+\delta+1)$). As $\phi_t(\cdot)$ is a stable operator, \Cref{Thm:stableOperator} implies that $\mathrm{gin}(\phi_t(I(U)))=I(U)$. Since taking the generic initial ideal preserves the Hilbert function, we conclude that $\phi_t(I(U))$ and $I_{\Delta_t(U)}$ have the same Hilbert function.
\end{proof}

Recall that a simplicial complex $\Delta$ on vertex set $[n]$ is \emph{shifted} if for every face $F\in \Delta$ and $i\in F$, also $F\setminus\{i\}\cup\{j\}$ lies in $\Delta$ for every $i<j\notin F$. It is straightforward to see that the Stanley-Reisner ideal $I_\Delta$ of a shifted simplicial complex $\Delta$ is squarefree strongly stable, i.e., we may replace a variable dividing a monomial $u\in G(I_\Delta)$ with a variable of a smaller index to obtain another monomial in $G(I_\Delta)$ as long as the resulting monomial is squarefree.

It follows from the previous theorem that pure shifted complexes are squeezed complexes up to coning. 

\begin{corollary}
     \label{cor:shifted are squeezed} 
Let $\Delta$ be a $(d-1)$-dimensional pure shifted simplicial complex. Then $\Delta$ is the join of a  squeezed complex $\Delta_0 (U)$ and a $(d-1-d_{\max}(U))$-dimensional simplex (which is possible the empty set) for some finite shifted order ideal $U$. 
\end{corollary}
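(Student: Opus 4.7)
I would start by peeling off the cone vertices. Because $\Delta$ is pure and shifted, if $v$ is a cone vertex and $w > v$ were not, then some facet $F$ would contain $v$ but not $w$, and the upward shift $F \setminus \{v\} \cup \{w\} \in \Delta$ would by purity be a facet missing the cone vertex $v$, a contradiction. Hence the cone vertices of $\Delta$ form a terminal segment of size $c \ge 0$, and $\Delta = \Delta'' * \sigma$, where $\sigma$ is the $(c-1)$-simplex on the cone vertices and $\Delta''$ is pure and shifted on $[N]$ with $N := n-c$, of dimension $D-1$ where $D := d-c$, and has no cone vertex. Setting $m := N-D = n-d$, the task reduces to producing a shifted order ideal $U \subset P(m)$ with $d_{\max}(U) = D$ and $\Delta_0(U) = \Delta''$: the simplex $\sigma$ will then be $(c-1) = (d-1-d_{\max}(U))$-dimensional as required.

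Since $\Delta''$ is shifted, $I_{\Delta''}$ is squarefree strongly stable. The map $\phi_0$ is a degree-preserving bijection between $\Mon(\infty)$ and the squarefree monomials of $\Mon(\infty)$, and the polarization correspondence extends this to a bijection between strongly stable and squarefree strongly stable monomial ideals. Let $J$ be the strongly stable ideal with $G(J) = \phi_0^{-1}(G(I_{\Delta''}))$, so that $\phi_0(J) = I_{\Delta''}$. If I show $J \subset P(m)$, $x_m^D \notin J$, and $x_m^{D+1} \in J$, then $U := \Mon(m) \setminus J$ is a shifted order ideal of $P(m)$ with $I(U) = J$ and $d_{\max}(U) = D$, and \Cref{thm:SR-ideal} delivers $I_{\Delta_0(U)} = \phi_0(I(U)) = \phi_0(J) = I_{\Delta''}$, hence $\Delta_0(U) = \Delta''$.

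The crucial combinatorial input, where the main work resides, is the support bound: \emph{if $G = \{j_1 < \cdots < j_s\}$ is a minimal non-face of $\Delta''$ with $s \le D$, then $j_s \le m + s - 1$}. I would argue by contradiction, assuming $j_s \ge m+s$. By minimality, $G \setminus \{j_s\}$ is a face and so by purity lies in some facet $F$ with $j_s \notin F$. Split $F = F_{\le} \sqcup F_>$ at $j_s$. If $F_{\le} \supsetneq G \setminus \{j_s\}$, pick $i \in F_{\le} \setminus G$: then $i < j_s$, $i \notin G$, and the upward shift $i \mapsto j_s$ inside $F$ yields a facet containing $G$, contradicting $G$ being a non-face. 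Otherwise $|F_>| = D - s + 1$, but $F_> \subseteq [j_s+1, N]$ forces $j_s \le N-D+s-1 = m+s-1$, another contradiction. Since the largest variable of $\phi_0(x_{i_1}\cdots x_{i_s})$ is $x_{i_s + s - 1}$, this bound, together with the trivial estimate for $s > D$ coming from $\phi_0(u) \in P(N)$, gives $i_s \le m$ for every $u \in G(J)$, hence $J \subset P(m)$.

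For Artinianness and $d_{\max}(U) = D$ I would exploit the absence of cone vertices. Since $N$ is not a cone vertex, some facet of $\Delta''$ lies in $[N-1]$; by successively applying upward shifts with targets restricted to $[N-1]$, one reaches $[m, N-1]$, which is therefore a facet. Consequently $\{m, m+1, \ldots, m+b-1\} \subseteq [m, N-1]$ is a face for every $b \le D$, so no such set is a minimal non-face and $x_m^D \notin J$. Meanwhile $[m, N]$, of size $D+1$, is a non-face, and each of its size-$D$ subsets is a facet: $[m+1, N]$ (always a facet as the shift-maximal $D$-subset) corresponds to omitting $m$; $[m, N-1]$ corresponds to omitting $N$; and for $k \in [m+1, N-1]$ the upward shift $k \mapsto N$ applied inside the facet $[m, N-1]$ produces $[m, N] \setminus \{k\}$. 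Hence $[m, N]$ is a minimal non-face, $x_m^{D+1} \in G(J)$, and the proof concludes.
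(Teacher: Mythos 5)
Your proof is correct, and it follows a genuinely different route from the paper's. The paper's argument is algebraic: it passes to characteristic zero, sets $m=\max(\gin(I_\Delta))$ and $\delta = d_{\max}(\gin(I_\Delta))$, invokes Cohen--Macaulayness of pure shifted complexes to get $x_m^\delta\in G(\gin(I_\Delta))$, and then pulls in facts about generic initial ideals of Stanley--Reisner ideals of shifted complexes (\cite[Lemma~3.2]{Mu-07}, \cite[Lemma~1.1]{AHH}) to control $m$, $\delta$, and $n$. You instead isolate the cone vertices of $\Delta$ combinatorially (observing via the shift moves that they form a terminal segment), reduce to the cone-free pure shifted complex $\Delta''$, and then verify directly~--~with the crisp support bound $j_s\le m+s-1$ for a minimal non-face $\{j_1<\cdots<j_s\}$ with $s\le D$, plus explicit facet computations showing $[m,N-1]$ is a facet and $[m,N]$ a minimal non-face~--~that $J:=\phi_0^{-1}(I_{\Delta''})$ sits inside $P(m)$, is Artinian, and has $x_m^{D+1}$ as a top-degree generator. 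Both proofs ultimately lean on the Aramova--Herzog--Hibi correspondence between strongly stable and squarefree strongly stable ideals (which is why $G(J)=\phi_0^{-1}(G(I_{\Delta''}))$ defines a strongly stable ideal) and on Theorem~\ref{thm:SR-ideal} to identify $\Delta_0(U)$ with $\Delta''$. What your route buys is that it is characteristic-free at the level of the argument (not just the conclusion), avoids generic initial ideals and Cohen--Macaulayness entirely, and makes the geometric meaning of the extra simplex transparent: it is exactly the cone vertices of $\Delta$. The paper's route is shorter given the machinery already cited in the surrounding text. One small point worth noting explicitly in your write-up: the reduction to $\Delta''$ ``on $[N]$'' implicitly uses that every element of $[n]$ is a vertex of $\Delta$ (so that after deleting the terminal cone segment, every element of $[N]$ remains a vertex of $\Delta''$); this is the standard normalization for shifted complexes and should be stated.
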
 

The argument below shows explicitly how to construct the order ideal $U$ and to determine the dimension of the simplex in the join. 

\begin{proof}[Proof of Corollary~\ref{cor:shifted are squeezed}] 
Since the statement is combinatorial, it is harmless to assume that we work over a base field $\K$ of characteristic zero. Denote 
by $n$ the cardinality of the vertex of $\Delta$. Set $m = \max (\gin (I_{\Delta}))$ and 
$\delta = \max \{\deg u ~|~ u \in G(\gin (I_{\Delta})) \}$. Then Lemma~\ref{lem:shifted vs strongly stable} gives that 
$U = \Mon (m) \setminus (\gin (I_{\Delta}) \cap P(m))$ is a finite order ideal with $d_{\max} (U) = \delta-1$. Since $\Delta$ is shifted by assumption \cite[Theorem 1.3]{AHH} shows that its Stanley-Reisner ideal is $I_{\Delta} = \phi_0 (I(U) ) \cdot P(n)$. 

Since $\Delta$ is pure, it is Cohen-Macaulay \cite[Section 4]{Ka-02} and we infer that  $x_m^\delta$ is a minimal generator of $\gin (I_{\Delta})$ (see e.g., \cite[Lemma 3.2(ii)]{Mu-07}). Hence, we get 
$\delta + m \le n+1$ by \cite[Lemma 1.1]{AHH}. If follows that $m + d_{\max} (U) \le n$. Using that $I_{\Delta_0 (U)}=\phi_0 (I(U)) \subset P(m+ d_{\max} (U))$ by Theorem~\ref{thm:SR-ideal}, we conclude that $\Delta$ is isomorphic to the join of $\Delta_0 (U)$ and a simplex of dimension $n-1-m-d_{\max} (U)$. Since $d=n-m$ by \cite[Lemma 3.2(i)]{Mu-07}, it follows that  $n-1-m-d_{\max}(U)=d-1-d_{\max(U)}$. 
\end{proof}


\section{Algebraic Properties } 
     \label{sect: Alg Prop}

As applications of the above results we now discuss algebraic properties of the Stanley-Reisner ideals of $t$-squeezed complexes. The starting point is the following result: 

\begin{proposition}
    \label{prop:gin squeezed}
Let $U\subset P(m)$ be a shifted order ideal. If $t \ge 0$ is any integer then one has 
\[
\gin (I_{\Delta_t (U)} )= I(U) \cdot P(m + t + d_{\max}(U)).
\]
\end{proposition}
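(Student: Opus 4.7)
My plan is to prove this proposition as a direct consequence of the two main tools assembled earlier: the explicit description of $I_{\Delta_t(U)}$ in Theorem~\ref{thm:SR-ideal} as the image of $I(U)$ under the map $\phi_t$, together with the fact that $\phi_t$ is a stable operator (Definition~\ref{def:map phi} and the lemma immediately following it) and Murai's Theorem~\ref{Thm:stableOperator} stating that stable operators invert the generic initial ideal construction on strongly stable ideals.

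First I would observe that since $U$ is a shifted order ideal, Lemma~\ref{lem:shifted vs strongly stable}(ii) guarantees that $I(U) \subset P(m)$ is strongly stable. Extending scalars, $I(U) \cdot P(\infty)$ is a finitely generated strongly stable ideal in $P(\infty)$, and its minimal generators live in the variables $x_1,\ldots,x_m$. Next, Theorem~\ref{thm:SR-ideal} gives $I_{\Delta_t(U)} = \phi_t(I(U))$, where the generators lie in $P(m + t + d_{\max}(U))$ (since $\phi_t$ shifts indices by at most $t + d_{\max}(U) - 1$ on any monomial of degree at most $d_{\max}(U)$). Because $\phi_t$ is a stable operator, Theorem~\ref{Thm:stableOperator} yields $\gin(\phi_t(I(U))) = I(U)$ as ideals in $P(\infty)$.

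Finally, I have to reconcile this equality in $P(\infty)$ with the desired equality in $P(n)$, where $n = m + t + d_{\max}(U)$. Since $I_{\Delta_t(U)} \subset P(n)$ and all minimal generators of $\gin(I_{\Delta_t(U)})$ involve only variables indexed by at most $n$, the generic initial ideal computed in $P(\infty)$ is obtained from the one in $P(n)$ by extension of scalars, and conversely $\gin(I_{\Delta_t(U)})$ in $P(n)$ is recovered by intersecting with $P(n)$. Combining these, $\gin(I_{\Delta_t(U)}) \cdot P(\infty) = I(U) \cdot P(\infty)$, and intersecting with $P(n)$ gives $\gin(I_{\Delta_t(U)}) = I(U) \cdot P(n)$, as asserted.

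The only mildly subtle point is the bookkeeping of ambient rings in the last paragraph — making sure the equality of generic initial ideals in $P(\infty)$ descends correctly to $P(n)$. This is routine given the conventions about $\gin$ for finitely generated monomial ideals in $P(\infty)$ set up just before Theorem~\ref{Thm:stableOperator}, so no real obstacle is expected; the substance of the proof is entirely contained in the application of Theorem~\ref{Thm:stableOperator} to the stable operator $\phi_t$.
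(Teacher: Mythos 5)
Your proposal is correct and follows exactly the same route as the paper: combine Theorem~\ref{thm:SR-ideal} (which writes $I_{\Delta_t(U)} = \phi_t(I(U))$) with Theorem~\ref{Thm:stableOperator} applied to the stable operator $\phi_t$. The paper simply states this in one line, while you have (harmlessly) spelled out the ambient-ring bookkeeping that the paper leaves implicit via the conventions set up before Theorem~\ref{Thm:stableOperator}.
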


\begin{proof} 
Since $\phi_t$ is a stable operator this follows by combining Theorems \ref{thm:SR-ideal} and   \ref{Thm:stableOperator}. 
\end{proof}

In order to state a first consequence, recall that a homogeneous ideal $I$ is \emph{componentwise linear} if, for
all integers $j$, the ideal $I_{\langle j \rangle}$ has a $j$-linear resolution. Here $I_{\langle j \rangle}$ denotes the ideal 
generated by all homogeneous elements of degree $j$ in $I$. This important class of ideals was introduced in \cite{HH-Nagoya}.  It includes, for example, all stable monomial ideals and homogeneous ideals with linear quotients. Furthermore, ideals with extremal homological properties are often componentwise linear.

\begin{corollary}
    \label{cor:comp linear}
The Stanley-Reisner ideal of any squeezed complex $\Delta$ is componentwise linear. The Alexander dual of $\Delta$ is sequentially Cohen-Macaulay. 
\end{corollary}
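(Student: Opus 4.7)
The plan is to deduce both assertions from Proposition~\ref{prop:gin squeezed} by invoking two classical theorems on componentwise linear ideals.

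First, by Proposition~\ref{prop:gin squeezed}, one has $\gin(I_{\Delta_t(U)}) = I(U)\cdot P(m+t+d_{\max}(U))$. Since $U$ is shifted, Lemma~\ref{lem:shifted vs strongly stable}(ii) tells us that $I(U) \subset P(m)$ is strongly stable, and adjoining further variables that do not divide any minimal generator preserves this property. Every strongly stable ideal is componentwise linear: for each degree $j$ the truncation $I(U)_{\langle j \rangle}$ is a stable monomial ideal generated in a single degree, and hence admits a linear resolution by the Eliahou--Kervaire formula. Consequently, $\gin(I_{\Delta_t(U)})$ is componentwise linear.

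Next, because $\phi_t$ is a stable operator, Theorem~\ref{thm:SR-ideal} together with condition (a) of the definition of a stable operator yields $\beta_{i,j}(I_{\Delta_t(U)}) = \beta_{i,j}(I(U))$ for all $i,j$. Combining this with the $\gin$ computation above gives
\[
\beta_{i,j}(I_{\Delta_t(U)}) = \beta_{i,j}(\gin(I_{\Delta_t(U)})) \qquad \text{for all } i,j.
\]
By a theorem of Aramova--Herzog--Hibi, equality of graded Betti numbers with the generic initial ideal (taken with respect to the reverse lexicographic order) characterizes componentwise linearity of a graded ideal; applying this criterion proves that $I_{\Delta_t(U)}$ is componentwise linear and establishes the first claim.

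For the second assertion we appeal to the extension of the Eagon--Reiner theorem due to Herzog--Reiner--Welker: for any simplicial complex $\Delta$, the Stanley--Reisner ideal $I_\Delta$ is componentwise linear if and only if the Alexander dual $\Delta^\vee$ is sequentially Cohen--Macaulay. Combined with the first part of the corollary, this yields the claim. The main obstacle is recognizing that one needs the Aramova--Herzog--Hibi criterion to bridge the gap: having the same Betti numbers as a (componentwise linear) monomial ideal is not by itself enough, but the extra structural fact that this monomial ideal is precisely the \emph{generic initial ideal} of $I_{\Delta_t(U)}$ makes the transfer legitimate.
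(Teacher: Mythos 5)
Your proof is correct and follows essentially the same route as the paper: read off the strongly stable (hence componentwise linear) structure of $\gin(I_{\Delta_t(U)})$ from Proposition~\ref{prop:gin squeezed} and Lemma~\ref{lem:shifted vs strongly stable}, observe via Theorem~\ref{thm:SR-ideal} and the stable operator property that $I_{\Delta_t(U)}$ shares the graded Betti numbers of its gin, invoke the Aramova--Herzog--Hibi criterion, and then pass to the Alexander dual by the extended Eagon--Reiner theorem. One small point: the Aramova--Herzog--Hibi characterization is stated in characteristic zero, so to cover an arbitrary field the paper additionally cites the Nagel--R\"omer result \cite[Theorem 2.3]{NR}; your argument should note this to avoid an implicit characteristic-zero hypothesis.
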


\begin{proof}
By Proposition~\ref{prop:gin squeezed} and Lemma~\ref{lem:shifted vs strongly stable}, the ideal $\gin (I_{\Delta_t (U)} )$ is strongly stable. Moreover, using again that $\phi_t$ is a stable operator, 
Theorem~\ref{thm:SR-ideal} and Proposition~\ref{prop:gin squeezed} show that $\gin (I_{\Delta_t (U)} )$ and $I_{\Delta_t (U)}$ have the same graded Betti numbers. Hence the first claim follows by \cite{ArHeHi} in characteristic zero and by \cite[Theorem 2.3]{NR} in arbitrary characteristic. It implies the second assertion by \cite[Theorem 8.3.20]{HH}. 
\end{proof} 

Using the Eliahou-Kervaire formula, one can read off the graded Betti numbers of a squeezed  complex from the corresponding order ideal. \medskip

\begin{corollary}
   \label{cor:Betti numbers} 
The graded Betti numbers of an ideal of a squeezed complex $\Delta_t (U)$ are 
\[
\beta_{i, i + j} ( I_{\Delta_t (U)} ) = \sum_{u \in G(I(U)), \ \deg u = j} \binom{\max (u) - 1}{i}, 
\] 
where $\max (u)$ is the largest index of a variable dividing the monomial $u$. 
\end{corollary}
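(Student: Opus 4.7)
The plan is to combine three ingredients already assembled in the paper: the fact that $I(U)$ is strongly stable, the fact that $\phi_t$ is a stable operator and hence preserves graded Betti numbers on strongly stable inputs, and the classical Eliahou--Kervaire formula. First, I would invoke Lemma~\ref{lem:shifted vs strongly stable}(ii) to note that since $U$ is shifted, the ideal $I(U)$ is strongly stable in $P(m)$, and Theorem~\ref{thm:SR-ideal} gives $I_{\Delta_t(U)} = \phi_t(I(U))$.

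Next, since $\phi_t$ is a stable operator, property (a) in the definition of a stable operator directly yields
$$\beta_{i,i+j}(I_{\Delta_t(U)}) \;=\; \beta_{i,i+j}(\phi_t(I(U))) \;=\; \beta_{i,i+j}(I(U))$$
for all $i, j \ge 0$. The ambient ring is irrelevant here as long as it contains every variable appearing in a minimal generator, so computing the Betti numbers of $I(U)$ over $P(m)$ is the same as computing them of $I_{\Delta_t(U)}$ over $P(m+t+d_{\max}(U))$.

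Finally, since $I(U)$ is strongly stable, the Eliahou--Kervaire formula applies and gives
$$\beta_{i,i+j}(I(U)) \;=\; \sum_{u \in G(I(U)),\ \deg u = j} \binom{\max(u)-1}{i},$$
which combined with the previous equality produces the claimed formula. There is no real obstacle: all the technical content sits in Theorem~\ref{thm:SR-ideal} (describing the Stanley--Reisner ideal via $\phi_t$) and in the stable-operator machinery; the corollary is a short chain of implications once those are in hand.
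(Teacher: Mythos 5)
Your proof is correct and uses essentially the same ingredients as the paper: the identification $I_{\Delta_t(U)} = \phi_t(I(U))$ from Theorem~\ref{thm:SR-ideal}, the stable-operator machinery, and the Eliahou--Kervaire formula applied to the strongly stable ideal $I(U)$. The only cosmetic difference is that you invoke property (a) of the stable-operator definition directly to get $\beta_{i,j}(I_{\Delta_t(U)}) = \beta_{i,j}(I(U))$, whereas the paper routes through Proposition~\ref{prop:gin squeezed}, observing that $I_{\Delta_t(U)}$ and $\gin(I_{\Delta_t(U)}) = I(U)\cdot P(m+t+d_{\max}(U))$ share graded Betti numbers; both are instances of the same fact, and your version is marginally more direct.
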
 

\begin{proof}
Since  $I_{\Delta_t (U)}$ and $\gin (I_{\Delta_t (U)} )$ have the same graded Betti numbers this follows by Proposition~\ref{prop:gin squeezed} and the Eliahou-Kervaire formula (see \cite{EK}) applied to $I(U)$. 
\end{proof}

We now turn our attention to the so-called Lefschetz properties. Recall that a standard graded 
Cohen-Macaulay $\K$-algebra of (Krull) dimension $d$ has the \emph{weak Lefschetz property (WLP)} if 
there is a linear system of parameters $\ell_1,\ldots,\ell_d \in A$ and a linear form $\ell \in A$ such 
that the Artinian reduction $\overline{A} = A/(\ell_1,\ldots,\ell_d) A$ of $A$ satisfies 
that the multiplication map $\times \ell \colon [\overline{A}]_i \to [\overline{A}]_{i+1}$ has maximal rank for every integer $i$. One says that $A$ has the \emph{strong Lefschetz property (SLP)} if every multiplication map $\times \ell^j \colon [\overline{A}]_i \to [\overline{A}]_{i+j}$ has maximal rank for every $i$. If one of these conditions holds, the linear form $\ell$ is called a \emph{weak Lefschetz element} (resp.\ \emph{strong Lefschetz element}) of $\overline{A}$. 

There is a rich literature on the problem of deciding whether a Cohen-Macaulay algebra has one of the Lefschetz properties. It has been studied from many
 different points of view, applying tools from representation theory,
 topology, vector bundle theory, lozenge tilings, splines, hyperplane arrangements,  inverse systems, 
 differential geometry, among others (see, e.g., \cite{BMMNZ, BK, CN, CHMN, DIV, HSS, HMNW, HMMNWW, KRV, MMO, NT, St-faces}).

 For example, let $\Delta$ be the boundary complex of a simplicial polytope $P$ and let $A$ be the Stanley-Reisner ring of $\Delta$. Stanley showed that $A$ has the SLP and deduced from this the necessity of the conditions on the $h$-vector of $P$ in the so-called $g$-Theorem that characterizes $h$-vectors of simplicial polytopes. 

From now on we assume that the base field $\K$ is infinite. Then it is well-known that if $A$ is  a $d$-dimensional Cohen-Algebra with the WLP (resp.\ SLP) and $\ell_1,\ldots,\ell_d \in A$ are general linear elements, then a general linear element $\ell$ in the Artinian reduction $\overline{A} = A/(\ell_1,\ldots,\ell_d) A$ is a weak (resp.\ strong) strong Lefschetz element of $\overline{A}$. 

For a simplicial complex $\Delta$, we say that $\Delta$ has the WLP or SLP if its Stanley-Reisner ring $A = \K[\Delta]$ has the corresponding property. We are going to show that the Lefschetz properties of a squeezed complex can be read off from its order ideal. This requires some preparation. 

\begin{lemma}
    \label{lem:reduce Lefschetz to order ideal} 
Let $U\subset P(m)$ be a shifted order ideal. Fix any integer $t \ge 0$,  
let $A= \K[\Delta_t (U)]$ be the  Stanley-Reisner ring of $\Delta_t (U)$,  and let 
$\overline{A} = A/(\ell_1,\ldots,\ell_d) A$ be its Artinian reduction, where $\ell_1,\ldots,\ell_d$ are 
general  linear elements of $A$. 
For a general linear form $\ell \in \overline{A}$ and integers $i, j$ with $j \ge 1$, consider the multiplication 
map $\phi = \times \ell^j \colon [\overline{A}]_i \to [\overline{A}]_{i+j}$ and the map 
\[
\psi  \colon  [U]_i \cup \{0\} \to [U]_{i+j} \cup \{0\}, \quad u \mapsto \begin{cases}
u x_m^j, & \text{ if } u x_m^j  \in U \\
0, & \text{ otherwise}. 
\end{cases}
\]
Then $\phi$ has maximal rank if and only if $\psi$ does. 
\end{lemma}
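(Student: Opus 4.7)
The plan is to translate both maps to the side of the generic initial ideal, where the comparison with $\psi$ becomes transparent. The main input will be Proposition~\ref{prop:gin squeezed}, which identifies $\gin(I_{\Delta_t(U)})$ with the extension $I(U)\cdot P(n)$.

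First, I would set $n=m+t+d_{\max}(U)$ and $d=t+d_{\max}(U)$, so that $A=P(n)/I_{\Delta_t(U)}$ is Cohen-Macaulay of Krull dimension $d$ (by Theorem~\ref{thm:vertexdecomposable}) and $n-d=m$. Writing $J:=\gin(I_{\Delta_t(U)})$, Proposition~\ref{prop:gin squeezed} gives $J=I(U)\cdot P(n)$. Since no minimal generator of $J$ involves the variables $x_{m+1},\ldots,x_n$, these variables form a regular sequence on $P(n)/J$, and
$$P(n)/(J,x_{m+1},\ldots,x_n)\iso P(m)/I(U).$$

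Second, I would invoke the standard transfer of multiplication ranks via the reverse-lexicographic generic initial ideal (see, e.g., \cite{HMMNWW}): since $A$ is Cohen-Macaulay, after a generic change of coordinates and a degree-reverse-lexicographic Gr\"obner degeneration the pair $(\overline{A},\times\ell^j)$ is mirrored on the gin side by $(P(m)/I(U),\times x_m^j)$, with $x_m$ playing the role of the generic linear form. Upper-semicontinuity of rank in the flat family, combined with the preservation of Hilbert functions under degeneration, then yields that $\phi$ has maximal rank if and only if the map $\times x_m^j\colon [P(m)/I(U)]_i\to [P(m)/I(U)]_{i+j}$ does.

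Third, I would identify the latter map with $\psi$ on the standard monomial basis: the $\K$-vector space $P(m)/I(U)$ has the monomials in $U$ as a basis, with $[U]_k$ spanning degree $k$, and multiplication by $x_m^j$ sends $u\in[U]_i$ to the monomial $ux_m^j$, which in the quotient equals $ux_m^j$ if $ux_m^j\in U$ and is $0$ (because then $ux_m^j\in I(U)$) otherwise. The matrix of $\times x_m^j$ in the monomial basis therefore coincides with the incidence matrix of $\psi$, so both maps have the same rank. Corollary~\ref{cor:hVector} then gives $\dim_{\K}[\overline{A}]_i=h_i(\Delta_t(U))=\#[U]_i$, so the notion of ``maximum rank'' agrees on both sides, and the lemma follows.

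The main obstacle will be a careful execution of the second step. The equivalence of the WLP or SLP between $A$ and its reverse-lex gin is classical, but here one needs the sharper statement that the equivalence of maximal rank holds for the specific bidegree $(i,i+j)$ under consideration. I would establish this via a semicontinuity argument in the flat Gr\"obner-degeneration family, exploiting the key feature of the reverse-lexicographic order that formation of the initial ideal commutes with quotienting by the last variable; in the Cohen-Macaulay situation this is enough to transfer the max-rank property in both directions.
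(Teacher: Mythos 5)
Your proposal takes essentially the same route as the paper: use Proposition~\ref{prop:gin squeezed} to move the problem to the reverse-lex gin $I(U)$, invoke the rank-transfer result (the paper delegates this to Wiebe~\cite{W}) to replace $\times\ell^j$ on $\overline{A}$ by $\times x_m^j$ on $P(m)/I(U)$, and then observe that in the monomial basis the latter map is exactly $\psi$. One caution about your second step as written: upper-semicontinuity of rank in the Gr\"obner flat family yields only \emph{one} implication -- maximal rank of $\times x_m^j$ on $P(m)/I(U)$ (the special fibre) forces maximal rank of $\phi$ (the general fibre) -- whereas the converse needs the \emph{exact} equality of ranks, which is precisely what~\cite{W} establishes using the compatibility of the reverse-lexicographic order with quotienting by the last variable; your closing paragraph gestures at this rlex compatibility but still frames the whole biconditional as a ``semicontinuity argument,'' and that framing would not deliver the ``only if'' direction.
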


\begin{proof}
It follows as in \cite{W} that $[\overline{A}]_i \stackrel{\ell^j}{\longrightarrow} [\overline{A}]_{i+j}$ has maximal rank if and only if the multiplication map $[P(m)/I(U)]_i \stackrel{x_m^j} {\longrightarrow}[P(m)/I(U)]_{i+j}$ does. Using that $[U]_i$ is a $K$-basis of $[P(m)/I(U)]_i$ by Proposition~\ref{prop:gin squeezed}, this implies the claim. 
\end{proof}

In order to discuss maximal rank of the above map $\psi$ it is convenient to introduce a concept modeled after the idea of a Borel generator of a strongly stable ideal (see e.g., \cite{FMS}). \medskip

\begin{definition}
    \label{def:shift gen}
(i) A monomial  $x_j u \in P(m)$ is said to be obtained by a \emph{shift move} from a monomial $x_i u$ if $j \ge i$. A monomial $u \in P(m)$ is called \emph{shift-generated} by a monomial $v \in P(m)$ if $u$ can be obtained from $v$ by a finite sequence of shift moves.     
    
(ii) Let $U \subset P(m)$ be a shifted order ideal. A set $G_s$  is a called a set of \emph{shift generators} of $U$ if the set of monomials that are shift-generated by some monomial in $G_s$ is  $U$. 
We denote by $G_s (U)$ the smallest set of shift generators of $U$. 
\end{definition}

\begin{example}
    \label{exa:shift gens}
There are exactly two shifted order ideals of $P(3)$ with $h$-vector $(1, 3, 3)$, namely 
\begin{align*}
U' & = \{1, x_3, x_2, x_1, x_3^2, x_2 x_3, x_2^2\}  \quad \text{ and } \\
U &= \{1, x_3, x_2, x_1, x_3^2, x_2 x_3, x_1 x_3\}.
\end{align*}
Their minimal sets of shift generators are $G_s (U') = \{1, x_1, x_2^2\}$ and $G_s (U) = \{1, x_1, x_1 x_3\}$.  
\end{example}

\begin{lemma}
    \label{lem:max rank of psi}
Let $U\subset P(m)$ be a shifted order ideal, and consider the map $\psi  \colon  [U]_i \cup \{0\} \to [U]_{i+j} \cup \{0\}$ introduced in Lemma~\ref{lem:reduce Lefschetz to order ideal}. Then one has: 
\begin{itemize}
\item[(i)] The map $\psi$ is surjective if and only if every monomial in $[U]_{i+j}$ is divisible by $x_m^j$. 

\item[(ii)] The map $\psi$ is injective if and only if $x_m^j [G_s (U)]_i \subset [U]_{i+j}$. 
\end{itemize}

\end{lemma}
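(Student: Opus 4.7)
The plan is to analyze surjectivity and injectivity of $\psi$ separately, in both cases exploiting that $\psi$ is essentially multiplication by $x_m^j$ with a fallback to $0$ whenever the product escapes $U$.

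For (i), I would first describe the image of $\psi$ explicitly. The nonzero values of $\psi$ are exactly the monomials of the form $u x_m^j$ with $u \in [U]_i$ and $u x_m^j \in U$. Because $U$ is an order ideal, any monomial $v \in [U]_{i+j}$ that is divisible by $x_m^j$ automatically satisfies $v/x_m^j \in [U]_i$, and hence $v$ lies in the image of $\psi$. Conversely, any nonzero element of the image is by construction divisible by $x_m^j$. Thus $\psi$ surjects onto $[U]_{i+j}\cup\{0\}$ precisely when every monomial in $[U]_{i+j}$ is divisible by $x_m^j$.

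For (ii), the restriction of $\psi$ to $\{u \in [U]_i : u x_m^j \in U\}$ is injective since $u \mapsto u x_m^j$ is. The only way injectivity can fail is therefore that two distinct elements of the domain both get sent to $0$, which happens iff some $u \in [U]_i$ has $u x_m^j \notin U$. Equivalently, $\psi$ is injective iff $x_m^j \cdot [U]_i \subseteq [U]_{i+j}$. It remains to show this is equivalent to the apparently weaker condition $x_m^j \cdot [G_s(U)]_i \subseteq [U]_{i+j}$.

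The forward direction is immediate since $[G_s(U)]_i \subseteq [U]_i$. For the converse, I would use that shift moves preserve degree, so every $u \in [U]_i$ is shift-generated by some $v \in [G_s(U)]_i$; by hypothesis, $v x_m^j \in U$. The key observation is that any shift move acting on a monomial $w$ extends verbatim to a shift move on $w \cdot x_m^j$: it touches only the $w$-part, and since $m$ is the largest index appearing, the trailing $x_m^j$ factor cannot be shifted further and causes no interference. Hence the sequence of shift moves carrying $v$ to $u$ also carries $v x_m^j$ to $u x_m^j$, and since $U$ is shifted and $v x_m^j \in U$, every term in this sequence lies in $U$, giving $u x_m^j \in U$. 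I do not foresee a substantive obstacle: the only point requiring care is the bookkeeping that shift moves on the augmented monomial $v x_m^j$ can be arranged to never touch the $x_m^j$ tail, and this is automatic from the definition.
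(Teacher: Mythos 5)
Your proof is correct and follows essentially the same route as the paper: for (i) you use that $U$ is an order ideal so that divisibility by $x_m^j$ is equivalent to being in the image, and for (ii) you first reduce injectivity to $x_m^j\,[U]_i\subseteq [U]_{i+j}$ and then observe that a shift-move sequence from $v$ to $u$ lifts verbatim to a shift-move sequence from $v x_m^j$ to $u x_m^j$, which is exactly the paper's key observation. The only cosmetic remark is that the clause about $x_m^j$ being ``unable to be shifted further'' is unnecessary—the relevant point is simply that the given moves on the $v$-part remain legal after multiplying by $x_m^j$—but this does not affect the validity of the argument.
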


\begin{proof}
Since the forward implications are straightforward we discuss only the backward implications. If 
$u \in [U]_{i+j}$ is divisible by $x_m^j$, then $u/x_m^j \in [U]_i$ as $U$ is an order ideal. 

For (ii), notice that $\psi$ is injective if and only if $x_m^j [U]_i \subset [U]_{i+j}$. Furthermore, if a monomial $v$ is shift-generated by some $u \in [G_s (U)]_i$ then $x_m^j v$ is shift-generated by $x_m^j u$. Combining these two facts, the claim follows.  
\end{proof}

The above discussion has the following consequences for the Lefschetz properties: 

\begin{proposition}
    \label{prop:char WLP}
Let $\Delta_t (U)$ be a squeezed complex with $h$-vector $(h_0,h_1,\ldots,h_d)$, where $d = d_{\max} (U)$. Let $\alpha$ be the least degree in which the $h$-vector attains its minimum, that is, 
\[
\alpha = \min \{i  ~|~  h_i = \max \{ h_k ~|~ 0 \le k \le d \} \}. 
\]
Then $\K[\Delta_t (U)]$ has the WLP if and only if the following conditions are satisfied:
\begin{itemize}
\item[(i)] Every $u \in [U]_j$ is divisible by $x_m$ for $j>\alpha$

\item[(ii)] $x_m [G_s (U)]_{j-1} \subset [U]_j$ for every $0<j\leq \alpha$. 
\end{itemize}

\end{proposition}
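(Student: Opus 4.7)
The plan is to combine Lemmas~\ref{lem:reduce Lefschetz to order ideal} and \ref{lem:max rank of psi} in order to reformulate WLP as a combinatorial condition on $U$, and then to pin down the precise index at which the transition from ``injective'' to ``surjective'' must occur. Applying Lemma~\ref{lem:reduce Lefschetz to order ideal} with $j=1$, WLP is equivalent to the maximal rank, for every $i \ge 0$, of the map $\psi_i \colon [U]_i \cup \{0\} \to [U]_{i+1} \cup \{0\}$ given by $u \mapsto u x_m$ when $u x_m \in U$ and by $u \mapsto 0$ otherwise. Since by Corollary~\ref{cor:hVector} the source and target of $\psi_i$ have $\K$-dimensions $h_i$ and $h_{i+1}$, maximal rank amounts to $\psi_i$ being injective when $h_i \le h_{i+1}$ and surjective when $h_i \ge h_{i+1}$.

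The main work will consist in identifying for which $i$ each of these two behaviors is forced. The key observation to establish is a propagation property: if $\psi_{i_0}$ is surjective, then so is $\psi_i$ for every $i \ge i_0$. Indeed, by Lemma~\ref{lem:max rank of psi}(i) surjectivity of $\psi_{i_0}$ means that every monomial in $[U]_{i_0+1}$ is divisible by $x_m$; then for any $v \in [U]_{i_0+2}$ and any variable $x_k$ dividing $v$, the quotient $v/x_k$ lies in $[U]_{i_0+1}$ (because $U$ is an order ideal) and is hence divisible by $x_m$, whence so is $v$. Iterating, every monomial in $[U]_{i+1}$ becomes divisible by $x_m$ for every $i \ge i_0$, and another application of Lemma~\ref{lem:max rank of psi}(i) completes the induction.

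With this in hand, both directions of the proposition should follow quickly. Assuming WLP, one defines $\alpha_0$ to be the smallest $i$ for which $\psi_i$ fails to be injective; maximal rank then forces $\psi_{\alpha_0}$ to be surjective, so by the propagation property every $\psi_i$ with $i \ge \alpha_0$ is also surjective. Hence $h_0 \le h_1 \le \cdots \le h_{\alpha_0}$ while $h_{\alpha_0} \ge h_{\alpha_0+1} \ge \cdots$, so the $h$-vector is unimodal and first reaches its maximum at $\alpha_0$; this forces $\alpha_0 = \alpha$. Lemma~\ref{lem:max rank of psi}(ii) applied on the injective range $0 \le i < \alpha$ then delivers condition (ii), while Lemma~\ref{lem:max rank of psi}(i) applied on the surjective range $i \ge \alpha$ delivers condition (i). Conversely, assuming (i) and (ii), the two parts of Lemma~\ref{lem:max rank of psi} yield surjectivity of $\psi_i$ for $i \ge \alpha$ and injectivity for $0 \le i < \alpha$, which in each case amounts to maximal rank, and WLP follows. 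The hard part will be the propagation argument together with the identification of the transition degree with $\alpha$; the rest is a mechanical translation through the two preparatory lemmas.
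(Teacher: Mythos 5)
Your plan follows exactly the route the paper takes: reduce WLP through Lemma~\ref{lem:reduce Lefschetz to order ideal} with $j=1$ to maximal rank of the $\psi_i$'s, translate injectivity and surjectivity through Lemma~\ref{lem:max rank of psi}, and identify the switching degree with $\alpha$. Making the propagation argument (surjectivity of $\psi_{i_0}$ propagates to all $i\ge i_0$, using that $U$ is an order ideal) explicit is a genuine improvement over the paper's terse phrase ``by the definition of $\alpha$,'' which silently relies on it.

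However, there is a real gap in the forward direction: the claim that $\alpha_0=\alpha$ is false in general. From WLP you correctly derive $h_0\le\cdots\le h_{\alpha_0}\ge h_{\alpha_0+1}\ge\cdots$, but the maximum value $h_{\alpha_0}$ may be attained at several consecutive degrees, and $\alpha$ is defined to be the \emph{first} of these, so only $\alpha\le\alpha_0$ follows. The paper's own Example~\ref{exa:shift gens}/\ref{exa:WLP SLP} already exhibits this: for $U$ with $h$-vector $(1,3,3)$ one has $\alpha=1$ while $\alpha_0=2$. With $\alpha<\alpha_0$, your propagation argument only gives surjectivity of $\psi_i$ for $i\ge\alpha_0$, which is not enough to deduce condition (i), since (i) requires divisibility by $x_m$ for every $j>\alpha$ — in the example, for $j=2$, which corresponds to $\psi_1$. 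The fix is short but needed: for $\alpha\le i<\alpha_0$, the map $\psi_i$ is injective and $h_i=h_{i+1}=\max h$ (because $h_\alpha\le h_i\le h_{i+1}\le h_{\alpha_0}$ and the two endpoints equal the maximum), so $\psi_i$ is bijective and in particular surjective. Once this is inserted, your argument covers the whole range $i\ge\alpha$ and the proof closes.
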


\begin{proof}
Denote by $\overline{A}$ a general Artinian reduction of $\K[\Delta_t (U)]$. By the definition of $\alpha$, the complex $\Delta_t (U)$ has the WLP if and only if the multiplication map 
$[\overline{A}]_{j-1} \stackrel{\ell}{\longrightarrow} [\overline{A}]_{j}$ is injective whenever $j \le \alpha$ and surjective whenever $j > \alpha$. The latter is equivalent to Conditions (i) and (ii) by Lemmas~\ref{lem:reduce Lefschetz to order ideal} and \ref{lem:max rank of psi}. 
\end{proof}

Similarly, one obtains the following result. 

\begin{proposition}
     \label{prop:char SLP}
Let $\Delta_t (U)$ be a squeezed complex with $h$-vector $h=(h_0,h_1,\ldots,h_d)$, where $d = d_{\max} (U)$. Then $\K[\Delta_t (U)]$ has the SLP if and only if, for any two integers $i, j$ with $0 \le i < j \le d$, one has
\begin{itemize}
\item[(i)] If $h_i \le h_j$, then $x_m^{j-i} [G_s (U)]_i \subset [U]_{j}$.

\item[(ii)] If $h_i \ge h_j$, then every $u \in [U]_j$ is divisible by $x_m^{j-i}$. 
\end{itemize}

\end{proposition}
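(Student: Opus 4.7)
The plan is to mirror the proof of Proposition~\ref{prop:char WLP} but apply it to every power of a linear form simultaneously, invoking Lemmas~\ref{lem:reduce Lefschetz to order ideal} and~\ref{lem:max rank of psi} in the same style. Since $\K$ is infinite, $\K[\Delta_t(U)]$ has the SLP precisely when, for a general Artinian reduction $\overline{A}$ and a general linear form $\ell \in \overline{A}$, the multiplication map $\times \ell^k \colon [\overline{A}]_i \to [\overline{A}]_{i+k}$ has maximal rank for all $i \ge 0$ and all $k \ge 1$.

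By Corollary~\ref{cor:hVector} and Proposition~\ref{prop:gin squeezed}, the Hilbert function of $\overline{A}$ equals the $h$-vector of $\Delta_t(U)$, so $\dim_\K [\overline{A}]_\ell = h_\ell$. Since $h_\ell = 0$ for $\ell > d$, the map $\times \ell^k$ is automatically of maximal rank whenever $i > d$ or $i+k > d$, and so the SLP reduces to the assertion that, for every pair of integers $0 \le i < j \le d$, the map $\times \ell^{j-i} \colon [\overline{A}]_i \to [\overline{A}]_{j}$ has maximal rank.

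For each such pair I will apply Lemma~\ref{lem:reduce Lefschetz to order ideal} (with exponent $j-i$ in place of $j$) to replace $\times \ell^{j-i}$ by the combinatorial map $\psi \colon [U]_i \cup \{0\} \to [U]_j \cup \{0\}$ sending $u \mapsto u x_m^{j-i}$ when this monomial lies in $U$ and to $0$ otherwise. Since its source and target span vector spaces of dimension $h_i$ and $h_j$, maximal rank of $\psi$ is equivalent to $\psi$ being injective when $h_i \le h_j$ and surjective when $h_i \ge h_j$, with both required when $h_i = h_j$. Translating through Lemma~\ref{lem:max rank of psi}, injectivity of $\psi$ becomes the containment $x_m^{j-i}[G_s(U)]_i \subset [U]_j$, which is exactly condition~(i), and surjectivity of $\psi$ becomes the condition that every $u \in [U]_j$ is divisible by $x_m^{j-i}$, which is condition~(ii).

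Conjoining these equivalences over all pairs $0 \le i < j \le d$ yields the claimed characterization. I do not expect a substantive obstacle, since the whole argument is bookkeeping on top of the two lemmas already established; the only mildly delicate point is the boundary case $h_i = h_j$, where maximal rank forces $\psi$ to be a bijection so that both (i) and (ii) are needed simultaneously, which is precisely what the non-strict inequalities $h_i \le h_j$ and $h_i \ge h_j$ in the statement enforce.
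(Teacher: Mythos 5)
Your proof is correct and follows exactly the route the paper intends: the paper omits the proof of Proposition~\ref{prop:char SLP}, stating only that it follows ``similarly'' to Proposition~\ref{prop:char WLP}, and your argument is precisely that adaptation, applying Lemma~\ref{lem:reduce Lefschetz to order ideal} with exponent $j-i$ and translating injectivity and surjectivity of $\psi$ via Lemma~\ref{lem:max rank of psi}. Your handling of the boundary case $h_i=h_j$ (both conditions required) is also the right reading of the non-strict inequalities in the statement.
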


We illustrate the above statements by an example. 

\begin{example}
     \label{exa:WLP SLP}
Let $U'$ and $U$ be the two shifted ordered ideals with $h$-vector $(1, 3, 3)$ as considered in Example~\ref{exa:shift gens}. Fix any integer $t \ge 0$. 
Proposition~\ref{prop:char WLP} shows that $\Delta_t (U')$ fails to have the WLP as $\alpha = 1$ and $x_2^2 \in [G_s (U')]_2$ is not divisible by $x_3$. However, the squeezed complex $\Delta_t (U)$ does have the SLP by Proposition~\ref{prop:char SLP}.
\end{example}


\section{Singularity Indices} 
   \label{sec:sing index}

Given any shifted order ideal $U \subset P(m)$, we introduced  squeezed complexes $\Delta_t (U)$, where $t$ is any integer with $0 \le t \le d_{\max} (U)$. The aim of this section is to characterize how close these simplicial complexes are to being pseudomanifolds. For $m \ge 2$, we will see that  $\Delta_0 (U)$ is never a pseudomanifold (see Proposition~\ref{prop:t=0} below), whereas $\Delta_{d_{\max}} (U)$ always is a pseudomanifold. Indeed, $\Delta_{d_{\max}}$ is even a ball \cite[Corollary 3.2]{Ka}. Intuitively, fixing the order ideal $U$, the complex $\Delta_t (U)$ is the closer to being a pseudomanifold the larger $t$ is. This section is an attempt to make this intuition more precise. To this end, we introduce an invariant called singularity index that measures how close a simplicial complex is to being a pseudomanifold. 

\begin{definition}
      \label{def:degree of a ridge}
The \emph{degree} of a ridge $F$ of a pure simplicial complex $\Delta$ is the number of facets of $\Delta$ containing $F$. It is denoted $\deg (F)$. 
\end{definition}

Thus, among the simplicial complexes with a fixed number of ridges and without boundary (i.e., the degree of every ridge is at least two), closed pseudomanifolds minimize the total degree, which is the sum of the degrees of all ridges. Similarly, if one considers all simplicial complexes with a given number of ridges in the interior, then open pseudomanifolds have the smallest total degree. To compare complexes with distinct numbers of ridges we use a relative version. 

\begin{definition}
      \label{def:sing index}
The \emph{singularity index} of a pure $d$-dimensional simplicial complex $\Delta$ is
\[
D (\Delta) = \frac{{\displaystyle \sum_{F \in \Delta,\ \dim F = d-1} \max \{0, \deg (F) - 2\}}}{f_{d-1} (\Delta)}. 
\]
\end{definition}

Hence a shellable simplicial complex $\Delta$ is a pseudomanifold if and only if $D (\Delta) = 0$ if and only if $\Delta$ is a shellable ball or sphere \cite{DK}. Moreover, if $\Delta$ is a pseudomanifold, then $D(\Delta)=0$. However, there are simplicial complexes that are not pseudomanifolds with singularity index $0$. Take, for example, a collection of $d$ simplices that is glued together along faces of codimension $\geq 2$. 

We want to remark, that there is a natural way to extend the notion of a singularity index to non-pure simplicial complexes. Instead of summing over ridges one could consider all those faces $F$ that lie in a facet $G$ with $\dim G=\dim F+1$. 

The singularity index is bounded. 

\begin{remark}
    \label{rem:trivial bounds}
If $\Delta$ is any simplicial complex then the degree of every ridge is at most $f_0 (\Delta) - \dim \Delta$, and so  $$0 \le D (\Delta) \le \max \{0, f_0 (\Delta) - \dim \Delta - 2 \}.$$ 

For any shifted order ideal $U \subset P(m)$, this gives that the degree of any ridge of a squeezed complex $\Delta_ t (U)$  is at most $m+1$ and $0 \le D(\Delta_t (U)) \le m-1$. Moreover, Theorem~\ref{thm:max order manifold} and Proposition~\ref{prop:t=0} below show that these bounds are sharp.
\end{remark}

Given a shifted order ideal $U$, the squeezed complex $\Delta_t(U)$ is always a pseudomanifold \cite{Ka} if $t\geq d_{\max}(U)$ and hence $D (\Delta_{t} (U)) = 0$ in this case.
It is natural to ask, if there are classes of shifted order ideals such that $D(\Delta_t(U))=0$ even if $t<d_{\max}(U)$. 

\begin{proposition}\label{prop: common divisor}
Let $U\subset P(m)$ be a shifted order ideal such that $x_m$ divides every monomial $u\in U$ of maximal degree. Then, $\Delta_{d_{\max}(U)-1}(U)$ is a ball, and hence
$D(\Delta_{d_{\max}(U)-1}(U))=0$.
\end{proposition}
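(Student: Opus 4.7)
The plan is to exhibit $\Delta_{d-1}(U)$, where $d := d_{\max}(U)$, as the base of a cone whose apex is a suitable cone vertex of the Kalai squeezed ball $\Delta_d(U)$; for concreteness I assume $d \ge 2$, the case $d=1$ being handled by direct inspection. First I would verify that the vertex $v := m + 2d - 1$ lies in every facet of $\Delta_d(U)$. For $u \in U$ of degree $s \le d-1$, the tail $[m+2s+1, m+2d] \subseteq F_d(u)$ from \eqref{eq:s<t} contains $v$; for $\deg u = d$, the hypothesis forces $i_d = m$, so the last pair $\{i_d+2d-2, i_d+2d-1\}$ coming from \eqref{eq:s>t} equals $\{m+2d-2, m+2d-1\} \ni v$. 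Hence $v$ is a cone vertex, and $\Delta_d(U) = v \ast (\Delta_d(U) \setminus v)$.

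Next I would establish a simplicial isomorphism $\Delta_d(U) \setminus v \cong \Delta_{d-1}(U)$ via the relabeling $\sigma \colon [m+2d] \setminus \{v\} \to [m+2d-1]$ that fixes $w$ for $w \le m+2d-2$ and sends $m+2d \mapsto m+2d-1$. For $u\in U$ of degree $s \le d-1$, the tail of $F_d(u) \setminus \{v\}$ is $[m+2s+1, m+2d-2] \cup \{m+2d\}$, which $\sigma$ maps onto $[m+2s+1, m+2d-1]$, while the pair part is unchanged; this matches $F_{d-1}(u)$. For $\deg u = d$, both facets coincide already: each equals $\bigcup_{\ell=1}^{d-1}\{i_\ell+2(\ell-1), i_\ell+2\ell-1\} \cup \{m+2d-2\}$, so $\sigma$ acts as the identity on it.

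Combining these two steps, $\Delta_d(U)$ is a cone over a complex isomorphic to $\Delta_{d-1}(U)$. Kalai \cite{Ka} proved that $\Delta_d(U)$ is a $(2d-1)$-ball, and a cone over a non-empty complex is a ball if and only if its base is a ball or a sphere (one inspects the link of the apex, which must be a ball or sphere according as the apex is a boundary or interior vertex). The sphere case is excluded by the $h$-vector: Corollary~\ref{cor:hVector} gives $h_{2d-1}(\Delta_{d-1}(U))=0$, whereas Dehn-Sommerville would force $h_{2d-1} = h_0 = 1$ for any $(2d-2)$-sphere. Therefore $\Delta_{d-1}(U)$ is a ball, and since every pseudomanifold has singularity index zero, $D(\Delta_{d-1}(U)) = 0$.

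The main obstacle is the case analysis in the second step: one must carefully track how the pair/tail decomposition in \eqref{eq:s<t} and \eqref{eq:s>t} behaves under both the deletion of $v$ and the relabeling $\sigma$, being mindful that degree-$d$ facets and degree-$<d$ facets interact with $v$ in different ways (via the hypothesis $i_d=m$ in one case and via the tail in the other). Once this bookkeeping is done, the remainder of the argument is a clean reduction to Kalai's ball theorem and the topological fact about cones.
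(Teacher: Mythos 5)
Your proof is correct and follows essentially the same route as the paper: exhibit $v = m+2d-1$ as a cone vertex of the Kalai ball $\Delta_d(U)$, identify $\Delta_d(U)\setminus v$ with $\Delta_{d-1}(U)$ via the shift-by-one relabeling at the top of the vertex set, and then invoke Kalai's theorem that $\Delta_d(U)$ is a ball. One small but genuine improvement over the paper's write-up: the paper concludes abruptly with ``the claim now follows since $\Delta_d(U)$ is a ball,'' silently using that a cone is a ball iff its base is a ball or a sphere, without ruling out the sphere case; you rule it out explicitly via Dehn--Sommerville and Corollary~\ref{cor:hVector} (the paper only notes in a later, separate remark that squeezed complexes are never spheres). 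Your caveat about $d=1$ is well taken: when $m\geq 2$ the hypothesis cannot hold in degree $1$, and when $m=1$ the complex $\Delta_0(U)$ is in fact a $0$-sphere rather than a ball, so the ``ball'' conclusion implicitly needs $d\geq 2$; the stated consequence $D(\Delta_{d-1}(U))=0$ still holds in that boundary case.
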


\begin{proof}
Put $d=d_{\max}(U)$. We are going to show that $\Delta_{d}(U)$ is isomorphic to a cone over $\Delta_{d-1}(U)$. Let $u=x_{i_1}\cdots x_{i_s}\in U$. If $\deg u<d$, then we have $\{m+2d-1,m+2d-1\}\subseteq F_d(u)$. If $\deg u=d$, then $i_d=m$ by assumption and hence $i_d+2d-1=m+2d-1\in F_d(u)$. It follows that $\Delta_d(U)$ is a cone over the vertex $m+2d-1$, i.e., $\Delta_d(U)=\{m+d-1\}\ast \Gamma$ for some pure simplicial complex $\Gamma$. It suffices to show that $\Gamma$ is isomorphic to $\Delta_{d-1}(U)$. To this end, consider the map $\phi: [m+2d-1]\to [m+2d-2]\cup\{m+2d\}$ that is the identity on $[m+2d-2]$ and that maps $m+2d-1$ to $m+2d$. It is easily seen that $\phi$ induces the required isomorphism between $\Delta_{d-1}(U)$ and $\Gamma$. The claim now follows since $\Delta_d(U)$ is a ball \cite{Ka}. 
\end{proof}

 The goal of this section is to study the sequence $(D (\Delta_{t} (U)))_{t \ge 0}$. Since $D (\Delta_{t} (U)) = 0$ if $t \ge d_{\max} (U)$, we are interested in the initial part of this sequence. 

Before turning to the general case we will focus on complexes obtained from maximal order ideals. A \emph{maximal order ideal} of $P(m)$ is a subset $U \subset P(m)$ such that, for some integer $d \ge 1$, $U$  consists of \emph{all} monomials in $P(m)$ whose degree is at most $d$. 
Note that every order ideal $U$ of $P(1)$ with a least two elements is maximal and that $\Delta_t (U)$ is a pseudomanifold whenever $t\geq 0$ by Example~\ref{exa:m=1}. Thus, we assume $m \ge 2$ from now on. 

Apart from the cases mentioned above, squeezed complexes to maximal order ideals are never manifolds. 

\begin{theorem}
     \label{thm:max order manifold}
Let $m\geq2$ and $U \subset P(m)$ be a maximal order ideal with $d_{\max}(U)\geq 2$. Then the following conditions are equivalent: 
\begin{itemize}
\item[(a)] $\Delta_ t (U)$ is a ball.
\item[(b)] $\Delta_t(U)$ is a pseudomanifold.
\item[(c)] $t \ge d_{\max (U)}$. 
\end{itemize}
More precisely, if $0 \le t < d_{\max (U)}$, then $\Delta_t (U)$ has a ridge of (maximum) degree $m+1$. 
\end{theorem}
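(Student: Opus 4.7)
The plan is to establish the refined ``more precisely'' statement first and derive the equivalence from it. The implication (c)$\Rightarrow$(a) is precisely Kalai's result that squeezed balls are balls \cite[Corollary 3.2]{Ka}, while (a)$\Rightarrow$(b) is immediate since every ball is a pseudomanifold. For (b)$\Rightarrow$(c) I would argue contrapositively: exhibiting a ridge of degree $m+1 \ge 3$ in $\Delta_t(U)$ whenever $0 \le t < d := d_{\max}(U)$ rules out $\Delta_t(U)$ being a pseudomanifold.

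To produce such a ridge I would set
\[
R := \{1, 2, \ldots, 2t\} \cup [m+2t+2,\; m+t+d],
\]
with the first block being empty when $t = 0$. A direct count gives $|R| = d+t-1$, so $R$ has the correct dimension to be a ridge. The key is then to exhibit $m+1$ distinct facets of $\Delta_t(U)$ that all contain $R$, and my candidates are the facets associated to the $m+1$ monomials $x_1^t$ and $x_1^t \cdot x_i$ for $i = 1, \ldots, m$ (with the convention $x_1^t \cdot x_1 = x_1^{t+1}$). Since $U$ is maximal and each of these monomials has degree at most $t+1 \le d$, they all lie in $U$.

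A direct computation with the formulas from \Cref{def:squeezed complex}, using \eqref{eq:s<t} for $x_1^t$ (where $s = t$) and \eqref{eq:s>t} for $x_1^t \cdot x_i$ (where $s = t+1 > t$), shows
\begin{align*}
F_t(x_1^t) &= R \cup \{m+2t+1\}, \\
F_t(x_1^t \cdot x_i) &= R \cup \{2t+i\} \quad \text{for } i = 1, \ldots, m.
\end{align*}
The $m+1$ added vertices $2t+1, 2t+2, \ldots, 2t+m, 2t+m+1$ are pairwise distinct, so these are $m+1$ distinct facets of $\Delta_t(U)$ containing $R$, yielding $\deg(R) \ge m+1$. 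The reverse inequality follows from the general upper bound $\deg(R) \le f_0(\Delta_t(U)) - \dim \Delta_t(U) = m+1$ recorded in \Cref{rem:trivial bounds}, so $\deg(R) = m+1$ exactly.

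The main obstacle is simply identifying the right ridge $R$; once it is in hand, the rest is mechanical verification using the defining formulas for $F_t(u)$. Maximality of $U$ is used only to guarantee that the chosen $m+1$ monomials all belong to $U$; the construction of $R$ itself is independent of $U$ beyond knowing that $U$ is maximal of top degree $d$.
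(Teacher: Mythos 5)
Your proof is correct and follows essentially the same strategy as the paper: dispatch (c)$\Rightarrow$(a) via Kalai, (a)$\Rightarrow$(b) trivially, and refute (b) for $t<d$ by exhibiting an explicit ridge contained in $m+1$ facets. The only difference is cosmetic: the paper takes the ridge $[d+t-1]$ together with the facets $F_t(x_1^{d-1})$ and $F_t(x_1^{d-1}x_j)$, working with monomials of degree $d-1$ and $d$, whereas you take the split ridge $\{1,\dots,2t\}\cup[m+2t+2,m+t+d]$ with the facets coming from $x_1^t$ and $x_1^t x_i$, working at degrees $t$ and $t+1$. Both witnesses verify correctly against the defining formulas for $F_t$, and both use maximality of $U$ only to ensure the chosen monomials lie in $U$; your appeal to Remark~\ref{rem:trivial bounds} to pin the degree at exactly $m+1$ is a small extra cleanliness the paper leaves implicit.
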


\begin{proof} 
Put $d = d_{\max (U)}$. 
Since by \cite[Corollary 3.2]{Ka}, $\Delta_t(U)$ is a shellable ball if $t\geq d$, it is enough to show that $[d+t-1]$ is a ridge of $\Delta_t (U)$ with degree $m+1$ whenever  
$0 \le t \le d$. Indeed, if $d\geq 2$, one computes that $F_t (x_1^{d-1}) = [d+ t - 1] \cup \{m+d+t\}$ and, for 
$j = 1,\ldots,m$, 
\[
F_t (x_1^{d-1} x_j) = [d+t-1] \cup \{d+t+j-1\}.
\]

\end{proof}

If $U$ is not a maximal order ideal, then $\Delta_t (U)$ can be a pseudomanifold even if $t < d_{\max (U)}$. 

\begin{example}
     \label{exa:almost max oder ideal} 
Consider the maximal order ideal $U \subset P(2)$ with $d_{\max} (U) = 3$. Put  
$U' = U \setminus \{x_1^3\}$. Then $\Delta_2 (U')$ and $\Delta_3 (U')$ are pseudomanifolds, and one gets for the sequence of singularity indices 
\[
D (\Delta_0 (U')) = \frac{7}{9} > \frac{4}{19} = D (\Delta_1 (U')) > 0 = D (\Delta_2 (U')) = D (\Delta_3 (U)). 
\]
\end{example}

However, a squeezed complex $\Delta_0 (U)$ is never a pseudomanifold. 

\begin{proposition}
     \label{prop:t=0}
Consider a maximal order ideal $U \subset P(m)$. Setting $d = d_{\max} (U)$, the complex $\Delta_0 (U)$ has $f$-vector 
\[
f = \left(1, m+d, \binom{m+d}{2},\ldots,\binom{m+d}{d}\right). 
\]
Its singularity index is $D(\Delta_0 (U)) = m-1$. 
\end{proposition}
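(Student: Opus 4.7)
The plan is to identify $\Delta_0(U)$ with the full $(d-1)$-skeleton of the simplex on the vertex set $[m+d]$, where $d := d_{\max}(U)$; once this is in hand both assertions follow immediately.

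First I would specialize formula \eqref{eq:s>t} to $t=0$, which applies to every monomial in $U$. For $u = x_{i_1}\cdots x_{i_s} \in U$ with $1 \le i_1 \le \cdots \le i_s \le m$ one obtains
$$
F_0(u) \;=\; \{i_1,\ i_2+1,\ \ldots,\ i_s+s-1\} \,\cup\, [m+s+1,\ m+d],
$$
a $d$-element subset of $[m+d]$ (the first set has $s$ strictly increasing elements all at most $m+s-1$, while the second contributes $d-s$ elements starting at $m+s+1$). The paper already records that $u \mapsto F_0(u)$ is injective. Since $U$ consists of all monomials in $P(m)$ of degree at most $d$, the hockey-stick identity gives $|U| = \sum_{k=0}^d \binom{m+k-1}{k} = \binom{m+d}{d}$, which equals the total number of $d$-subsets of $[m+d]$. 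Hence $u \mapsto F_0(u)$ is a bijection onto all $d$-subsets, so every such subset is a facet of $\Delta_0(U)$.

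It follows that $\Delta_0(U)$ is the $(d-1)$-skeleton of the simplex on $[m+d]$, so $f_i(\Delta_0(U)) = \binom{m+d}{i+1}$ for $0 \le i \le d-1$, proving the $f$-vector claim. For the singularity index, the ridges of $\Delta_0(U)$ are its $(d-2)$-dimensional faces, i.e., all $(d-1)$-subsets of $[m+d]$; there are $\binom{m+d}{d-1}$ of them. Any such ridge $F$ is contained in exactly $(m+d)-(d-1) = m+1$ facets (one for each element of $[m+d]\setminus F$), so its degree equals $m+1$. Substituting into Definition~\ref{def:sing index} yields
$$
D(\Delta_0(U)) \;=\; \frac{\binom{m+d}{d-1}\bigl((m+1)-2\bigr)}{\binom{m+d}{d-1}} \;=\; m - 1.
$$

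The only bookkeeping point that requires care is that the denominator in Definition~\ref{def:sing index} counts the $(d-2)$-dimensional faces (the ridges) of the $(d-1)$-dimensional complex $\Delta_0(U)$, not the facets; once the skeleton identification is established no real obstacle remains.
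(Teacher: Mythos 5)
Your proof is correct and takes the same approach as the paper: identify $\Delta_0(U)$ as the full $(d-1)$-skeleton of the simplex on $[m+d]$ (the paper does this by the same facet-count/injectivity argument, just stated more tersely) and then read off the $f$-vector and compute the singularity index directly. Your bookkeeping — including the caution that the denominator in Definition~\ref{def:sing index} is the number of ridges $f_{d-2}(\Delta_0(U)) = \binom{m+d}{d-1}$ rather than the number of facets — is accurate.
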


\begin{proof}
By definition, $\Delta_0 (U)$ is a $(d-1)$-dimensional complex on $[m+d]$ with $|U| = \binom{m+d}{d}$ facets. Thus, $\Delta_0 (U)$ is the $(d-1)$-skeleton of an $(m+d-1)$-simplex, which implies the assertions. 
\end{proof}

\begin{remark}
It follows from \Cref{prop:t=0} that a squeezed complex $\Delta_t(U)$ can never be a sphere. Indeed, in this case, we would have $h_{d_{\max}(U)+t}(\Delta_t(U))=1$, which, by \Cref{cor:hVector}, can only happen if $t=0$. However, in this situation, $\Delta_t(U)$ is never a pseudomanifold.
\end{remark}
We now determine the sequence $(D (\Delta_{t} (U)))_{t \ge 0}$ in some cases. 

\begin{proposition}
    \label{prop:d=2}
Fix $m \ge 2$, and let $U \subset P(m)$ be the shifted order ideal consisting of all monomials whose degree is at most 2. Then the singularity indices of the squeezed complexes of $U$ are 
\[
D (\Delta_0 (U)) = m-1 > \frac{\binom{m+1}{2} -1}{\binom{m+3}{2}} = D (\Delta_1 (U)) > 0 = D (\Delta_2 (U)). 
\]
\end{proposition}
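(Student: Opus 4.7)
The plan is to verify the three values of the singularity index separately. The endpoints follow immediately from earlier results in the paper. Since $U$ is the maximal order ideal with $d_{\max}(U) = 2$, \Cref{prop:t=0} gives $D(\Delta_0(U)) = m-1$; and since $t = 2 = d_{\max}(U)$, the complex $\Delta_2(U)$ is a ball by \cite[Corollary~3.2]{Ka} and in particular a pseudomanifold, so $D(\Delta_2(U)) = 0$. The strict inequalities $m - 1 > (\binom{m+1}{2} - 1)/\binom{m+3}{2} > 0$ for $m \ge 2$ are then elementary.

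The main work is computing $D(\Delta_1(U))$. Specializing~\eqref{eq:s<t} and~\eqref{eq:s>t} with $t = 1$ and $d_{\max}(U) = 2$, the facets of $\Delta_1(U)$ on the vertex set $[m+3]$ are
\begin{equation*}
\{m+1, m+2, m+3\}, \quad \{i, i+1, m+3\}\ \text{for } 1 \le i \le m, \quad \{i, i+1, j+2\}\ \text{for } 1 \le i \le j \le m.
\end{equation*}
By \Cref{cor:hVector}, the $h$-vector of $\Delta_1(U)$ is $(1, m, \binom{m+1}{2}, 0)$, and inverting the standard $f$-$h$ relation for a $2$-dimensional complex gives the denominator
\begin{equation*}
f_1(\Delta_1(U)) = 3h_0 + 2h_1 + h_2 = 3 + 2m + \binom{m+1}{2} = \binom{m+3}{2}.
\end{equation*}

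For the numerator $\sum_F \max\{0, \deg(F) - 2\}$, I would inspect each edge against the three facet families above. The key observation is that every non-consecutive edge $\{a, b\}$ with $b \ge a + 2$ has degree at most $2$: such an edge can sit inside $\{i, i+1, j+2\}$ only as $\{i, j+2\}$ or $\{i+1, j+2\}$ (each sub-case determining $(i, j)$ uniquely), the family $\{i, i+1, m+3\}$ contributes further facets only when $b = m + 3$, and $\{m+1, m+2, m+3\}$ covers only the non-consecutive pair $\{m+1, m+3\}$. A parallel case analysis for the consecutive edges yields $\deg(\{1, 2\}) = m + 1$ and $\deg(\{k, k+1\}) = m + 3 - k$ for $2 \le k \le m + 2$. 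Summing the excesses over all ridges then gives
\begin{equation*}
\sum_{F} \max\{0, \deg(F) - 2\} = (m - 1) + \sum_{k=2}^{m+1} (m+1-k) = (m-1) + \binom{m}{2} = \binom{m+1}{2} - 1,
\end{equation*}
which yields the claimed formula for $D(\Delta_1(U))$.

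The main obstacle is the bookkeeping in the degree count for consecutive edges: for each $k$, one has to count separately how many facets of each family contain $\{k, k+1\}$ (in the $x_i x_j$ family, $\{k, k+1\}$ may arise as $\{i, i+1\}$ with $j$ ranging in an admissible interval, or as $\{i+1, j+2\}$ with $j = k-1$), and the boundary indices $k \in \{1, m+1, m+2\}$ require separate treatment. Once these degrees are established, the reduction to $\binom{m+1}{2} - 1$ is a routine arithmetic identity.
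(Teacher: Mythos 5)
Your proof is correct and proceeds along essentially the same lines as the paper: handle $t=0$ via Proposition~\ref{prop:t=0} and $t=2$ via Kalai's result, list the facets of $\Delta_1(U)$ explicitly, count ridges (you derive the count $\binom{m+3}{2}$ from the $h$-vector whereas the paper observes that the ridges are exactly the $2$-subsets of $[m+3]$, a typo there writes $f_2$ for $f_1$), and then identify the only ridges of degree exceeding two as the consecutive pairs $\{k,k+1\}$ with the same degree formula. The subsequent arithmetic reduction to $\binom{m+1}{2}-1$ matches the paper's computation.
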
 

\begin{proof}
By Proposition~\ref{prop:t=0} and \cite{Ka}, it suffices to determine $D (\Delta_1 (U))$. The facets of $\Delta_1 (U)$ are 
\begin{align*}
F_1 (1) & = \{m+1, m+2, m+3\}, \\
F_1 (x_i) & = \{i, i+1, m+3\}, \quad 1 \le i \le m, \\
F_1 (x_i x_j) & = \{i, i+1, j+2\},  \quad 1 \le i \le j \le m. 
\end{align*}
The ridges of $\Delta_1 (U)$ are precisely all 2-subsets of $[m+3]$, that is, $f_2 = f_2 (\Delta_1 (U)) = \binom{m+3}{2}$.  Most of the ridges are contained in at  most two facets. The exceptions are $\{1, 2\}$, which is contained in $m+1$ facets (see the proof of 
 Theorem~\ref{thm:max order manifold}) and $\{k, k+1\}$ with $2 \le k \le m$. The latter is contained in the $m-k+3$ facets $\{k-1, k, k+1\}$ and $\{k, k+1, \ell\}$, where $k+2 \le \ell, m+3\}$. Thus, we obtain 
\[
 f_2 \cdot D (\Delta_1 (U)) = m-1 + \sum_{k=2}^{m} (m-k+1) = \binom{m+1}{2} - 1. 
 \]
 Now the claim follows. 
\end{proof}

\begin{proposition}
    \label{prop:d=3}
Fix $m \ge 2$, and let $U \subset P(m)$ be the shifted order ideal consisting of all monomials whose degree is at most 3. Then the singularity indices of the squeezed complexes of $U$ are 
\begin{align*}
D (\Delta_0 (U)) = m-1&  > \frac{2 \binom{m+2}{3} + \binom{m}{2} -2}{\binom{m+4}{3}} = D (\Delta_1 (U)) \\[1ex]
& > \frac{\binom{m+2}{3} -1}{\binom{m+4}{3} + \binom{m+3}{3}} = D (\Delta_2 (U)) 
 >  0 = D (\Delta_3 (U)). 
\end{align*}
\end{proposition}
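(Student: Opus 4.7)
The plan is to mirror the proof of Proposition~\ref{prop:d=2}. The endpoints are immediate: $D(\Delta_0(U)) = m-1$ follows from Proposition~\ref{prop:t=0}, and $D(\Delta_3(U)) = 0$ because $\Delta_3(U)$ is a ball by \cite{Ka}. The strict inequalities among the four fractions reduce to a direct numerical comparison once the middle two values are established, so the real content is computing $D(\Delta_1(U))$ and $D(\Delta_2(U))$.

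For each $t \in \{1,2\}$, I would proceed in three steps. First, list the facets of $\Delta_t(U)$ explicitly via \eqref{eq:s<t} and \eqref{eq:s>t}. For $t=1$ on $[m+4]$ these come in four families: $\{m+1, m+2, m+3, m+4\}$; $\{i, i+1, m+3, m+4\}$ for $1 \le i \le m$; $\{i, i+1, j+2, m+4\}$ for $1 \le i \le j \le m$; and $\{i, i+1, j+2, k+3\}$ for $1 \le i \le j \le k \le m$. For $t=2$ on $[m+5]$ one obtains analogous families, where in particular the $s=3$ facet reads $\{i, i+1, j+2, j+3, k+4\}$. Second, determine the number of ridges $f_{\delta-1}(\Delta_t(U))$, where $\delta = d_{\max}(U)+t-1$. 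By Corollary~\ref{cor:hVector} the $h$-vector is $(1, m, \binom{m+1}{2}, \binom{m+2}{3}, 0, \dots)$, and the standard $h$-to-$f$ conversion gives $f_2(\Delta_1(U)) = \binom{m+4}{3}$ and $f_3(\Delta_2(U)) = \binom{m+4}{3} + \binom{m+3}{3}$, matching the denominators in the claim. For $t=1$ the first equality in fact forces every $3$-subset of $[m+4]$ to be a face of $\Delta_1(U)$. Third, for each ridge $F$ count the facets containing it and sum $\max\{0, \deg(F)-2\}$.

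The third step is the main obstacle, because many ridges sit in facets coming from more than one of the four monomial types listed above, and a careful bookkeeping scheme is required to avoid double-counting or omissions. I would classify each ridge by the consecutive pair $\{i, i+1\}$ it contains (if any) together with its remaining vertices, using the shift structure of $U$ to read off precisely which monomials $u$ produce a facet through a given ridge. The extremal ridge $\{1,2,3\}$ of degree $m+1$ identified in the proof of Theorem~\ref{thm:max order manifold} supplies the spike at the bottom of the vertex set, while systematic families such as $\{i,i+1,j+2\}$ (lying in the facets $F_1(x_ix_j)$ and in $F_1(x_ix_jx_k)$ for varying $k$), and analogous sub-ridges of the $s=3$ facets in the $t=2$ case, account for the remaining excess. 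Collecting the contributions in closed form I expect the numerators to collapse to $2\binom{m+2}{3} + \binom{m}{2} - 2$ and $\binom{m+2}{3} - 1$ respectively; dividing by the denominators from step two yields the stated values, and the strict monotonicity of the four-term sequence is then a direct comparison of the resulting rational expressions.
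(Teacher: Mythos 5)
Your setup matches the paper's proof exactly: compute the endpoints from Proposition~\ref{prop:t=0} and \cite{Ka}, list the facets via Equations~\eqref{eq:s<t}--\eqref{eq:s>t}, obtain the ridge counts $\binom{m+4}{3}$ and $\binom{m+4}{3}+\binom{m+3}{3}$ from the $h$-vector, and then sum the excess degrees of the ridges. The facet descriptions you give for $t=1$ and $t=2$ are correct, and the denominators are correctly derived.

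However, the third step --- which you yourself flag as ``the main obstacle'' --- is not actually carried out, and it is precisely the content of the proposition. You never produce the classification of ridges of degree $\ge 3$ together with their degrees, which the paper does explicitly: for $t=1$ the ridges $\{a,a+1,b\}$ with $2\le a\le m$, $a+2\le b\le m+4$ have degree $m-a+3$, and $\{1,2,b\}$ with $3\le b\le m+4$ have degree $m+1$; for $t=2$ the ridges $\{a,a+1,b,b+1\}$ with $1\le a<m$, $a+3\le b\le m+2$ and $\{a,a+1,a+2,a+3\}$ with $1\le a\le m$ carry the excess. Your proposal only mentions $\{1,2,3\}$ and gestures at ``systematic families such as $\{i,i+1,j+2\}$'' without determining their degrees or summing them. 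The phrase ``Collecting the contributions in closed form I expect the numerators to collapse to $2\binom{m+2}{3}+\binom{m}{2}-2$ and $\binom{m+2}{3}-1$'' is an anticipation, not a verification. Since the whole point of the proposition is the closed-form numerator, the proof has a genuine gap at its core: the bookkeeping that would actually produce these numbers is missing. Moreover, the bookkeeping is not entirely routine --- for example, the ridge $\{2,3,4\}$ of $\Delta_1(U)$ lies not only in facets of the families you named explicitly but also in $F_1(x_1^3)=\{1,2,3,4\}$, and omitting such contributions would give the wrong degree --- so the classification really does need to be written out to certify the formula.
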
 

\begin{proof} 
First, consider $\Delta_1 (U)$. We list only the ridges whose degrees are at least three. These are $\{a, a+1,b\}$ with $2 \le a \le m$ and $a+2 \le b \le m+4$, which has degree $m-a+3$, and $\{1,2,b\}$ with $3 \le b \le m+4$, which has degree $m+1$. Since the $h$-vector of $\Delta_1(U)$ can be obtained from the one of $\Delta_0(U)$ by appending a $0$, the number of ridges of $\Delta_1(U)$ is given by
 $$f_{3}(\Delta_1(U))=f_2(\Delta_0(U))+f_3(\Delta_0(U))=\binom{m+3}{2}+\binom{m+3}{3}=\binom{m+4}{3},$$ 
 where the second equality follows from \Cref{prop:d=2}. Also note that $f_4(\Delta_1(U))=f_3(\Delta_0(U))=\binom{m+3}{3}$. 

Second, we describe the ridges of $\Delta_2 (U)$ whose degrees are at least three. These are $\{a, a+1,b,b+1\}$ with $1 \le a < m$ and $a+3 \le b \le m+2$, which has degree $m-c+5$, and $\{a, a+1, a+2, a+3\}$ with $1 \le a \le m$. The latter has degree $m-a+3$ if $a \ge 2$, whereas $\{1,2,3,4\}$ has degree $m+1$. The same reasoning as for $\Delta_1(U)$ shows that the number of ridges of $\Delta_2(U)$ is 
$f_{4}(\Delta)= \binom{m+4}{3}+\binom{m+3}{3}$. 

Now straightforward computations give the formulas for the singularity indices of $\Delta_1 (U)$ and $\Delta_2 (U)$ as well as the claimed inequalities. 
\end{proof}

Observe that in  all the examples above the sequence $(D (\Delta_{t} (U)))_{t \ge 0}$ is weakly decreasing. In fact, this is always true. In order to establish this fact, the following result will be useful. 

\begin{lemma}
     \label{lem:technical}
Let $U \subset P(m)$ be a shifted order ideal. Consider any integer $t$ with $1 \le t \le d = d_{\max} (U)$.  Then on has: 
\begin{itemize}
\item[(a)] If $u \in U$ and $F_t (U) = \{k_1,\ldots,k_{d+t}\}$ with $k_1 < k_2 < \cdots < k_{d+t}$ then 
\[
F_{t-1} (u) = \{k_1,\ldots, k_{2t -1}, k_{2t+1} -1,\ldots,k_{d+t}-1\}, 
\]
that is,  the $(2t)$\textsuperscript{th} smallest element is omitted and all larger elements of $F_t (u)$ are reduced by one. 

\item[(b)] Let $u = x_{i_1} \cdots x_{i_s}$ and $v = x_{j_1} \cdots x_{j_r}$ be two distinct monomials of positive degrees in $U$ with $i_1 \le \cdots \le i_s$ and  $j_1 \le \cdots \le j_r$. Assume that $F_t (u) \cap F_t (v)$ is a ridge of $\Delta_t (U)$ and there is some integer $a \in [t]$ such that 
$i_{\ell} = j_{\ell}$ if $\ell < a$. Then the following statements are true: 
\begin{enumerate}
\item If $0<i_a < j_a$ or $j_a=0$, then $\deg u \ge \deg v$. 

\item If for some $b \in [t]$ with $b \ge a$ one has $i_a = i_{a+1} = \cdots = i_b$ and 
$j_a = \cdots j_b = i_a + 1$ and either $i_{a} < i_{b+1}$ or $i_{b+1}=0$, then $x_{i_a + 1} \cdot u = x_{i_a} \cdot v$. 

\end{enumerate}
 
\item[(c)] If $u, v \in U$ are two distinct monomials whose degrees are at most $t$ and such that $F_t (u) \cap F_t (v)$ is a ridge of $\Delta_t (U)$, then this is a ridge of degree two. 

\item[(d)] Let $u,v \in U$ be two distinct monomials and let $a\in \mathbb{N}$ such that the smallest $a$ elements in $F_t(u)$ and $F_t(v)$ coincide but this ist not true for the smallest $a+1$ elements in each set. If $a\leq 2t$, then $a$ is even.
\end{itemize}

\end{lemma}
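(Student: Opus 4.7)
The approach is to exploit the explicit pair-structure at the start of the sorted list $F_t(u)$. Writing $u = x_{i_1}\cdots x_{i_s}$ with $i_1 \le \cdots \le i_s$, the monotonicity $i_\ell \le i_{\ell+1}$ together with the bound $i_\ell \le m$ ensures (via a routine check using \eqref{eq:s<t} and \eqref{eq:s>t}) that the smallest $2\min(s,t)$ elements of $F_t(u)$ in increasing order are grouped into consecutive pairs: positions $2\ell-1$ and $2\ell$ equal $i_\ell + 2(\ell-1)$ and $i_\ell + 2\ell - 1$ for $\ell = 1, \ldots, \min(s,t)$. Moreover, when $s \le t$, the element in position $2s+1$ is $m + 2s + 1$.

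The plan is to argue by contradiction: assume $a$ is odd, say $a = 2k - 1$ with $1 \le k \le t$, and set $s = \deg u$, $r = \deg v$, $v = x_{j_1}\cdots x_{j_r}$. The argument splits according to whether $s, r$ exceed $k$. First, if both $s \ge k$ and $r \ge k$: agreement at the first $2(k-1)$ positions forces $i_\ell = j_\ell$ for $\ell < k$, and agreement at position $2k-1$ then gives $i_k = j_k$; but this makes the $(2k)$th elements coincide as well, contradicting the fact that agreement fails at position $2k$.

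Next, if exactly one of $s, r$ is at least $k$, say $s \ge k > r$: since $r < k \le t$, the $(2r+1)$st element of $F_t(v)$ is $m + 2r + 1$, while the corresponding element of $F_t(u)$ is $i_{r+1} + 2r \le m + 2r$ (using $r+1 \le k \le \min(s,t)$). Since $2r + 1 \le 2k - 1 = a$, the two must agree, giving $i_{r+1} = m + 1$, which is impossible. Finally, if both $s, r < k$, I may assume $s \le r$: if $s < r$, position $2s+1$ equals $m+2s+1$ in $F_t(u)$ but equals $j_{s+1} + 2s \le m+2s$ in $F_t(v)$, a contradiction; and if $s = r$, then the first $2s \le 2(k-1) < a$ positions agreeing forces $i_\ell = j_\ell$ for every $\ell$, contradicting $u \ne v$.

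The main obstacle is purely bookkeeping: verifying that the leading pair-structure of $F_t(u)$ in sorted order is never disturbed by tail elements, and checking in each case that the position one inspects actually lies inside the agreeing prefix of size $a$. Neither step requires any new idea beyond the monotonicity $i_\ell \le i_{\ell+1}$ and the bound $i_\ell \le m$.
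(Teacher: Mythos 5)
Your proposal addresses only part~(d) of the lemma. Parts~(a), (b), and~(c) are not touched at all, so the proof is seriously incomplete: a statement with four clauses requires all four to be established, and (b) and (c) in particular are the parts that do nontrivial work in the proof of Theorem~\ref{thm:sing index}.

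For part~(d) itself, your argument is correct and in fact more explicit than the paper's, which observes in one line that for $1\le k\le t$ the $(2k-1)$\textsuperscript{st} smallest element $\ell$ of $F_t(u)$ always satisfies $\ell+1\in F_t(u)$, whence agreement on the first $2k-1$ entries forces agreement on the $2k$\textsuperscript{th}. Your case split on whether $\deg u$, $\deg v$ exceed $k=(a+1)/2$ reaches the same conclusion and the bookkeeping (positions $2r+1$ and $2s+1$ landing in the agreeing prefix, the bound $i_\ell\le m$, and the $s=r$ endgame forcing $u=v$) checks out.

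But you still need to supply proofs of (a) (a direct computation comparing the formulas for $F_{t-1}(u)$ and $F_t(u)$ in the two regimes $\deg u\le t$ and $\deg u\ge t$), of (b)(i) and (b)(ii) (which compare two facets sharing a ridge and extract degree and shift information from the positions where they differ), and of (c) (which uses that $\Delta_t(U_{\le t})$ is a pseudomanifold by Kalai's theorem together with a parity/degree argument). As it stands the proposal cannot be accepted as a proof of the lemma.
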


\begin{proof}
(a) Considering the two cases where $\deg u \le t$ and $\deg u \ge t$ separately, this follows by straightforward computations. 

(b)(i) The statement is trivially true if $j_a=0$. So assume that $j_a>i_a> 0$. If $\deg u < \deg v$, then $m+ \deg v + \min \{t, \deg v \}$ is in $F_t (u)$, but not in $F_t (v)$. Since $i_a < j_a$ one also has that $i_a + 2a-2$ is in $F_t (u)$, but not in $F_t (v)$. It follows  that $F_t (u) \cap F_t (v)$ cannot be a ridge; a contradiction.  

(b)(ii) First assume that $0<i_a<i_{b+1}$. Using again that $i_a + 2a -2 \notin F_t(v)$ as well as $i_a + 2b = i_b + 2b = j_b + 2b -1 \notin F_t (u)$ as $i_{b+1} > i_b$ and $F_t(u)\cap F_t(v)$ is a ridge by assumption, we get
\[
|F_t (u) \cap F_t (v) \cap [i_a + 2b] | = |F_t (u)  \cap [i_a + 2b] | -1  = |F_t (v) \cap [i_a + 2b] | - 1. 
\]
Since $F_t (u) \cap F_t (v)$ is a ridge this implies $F_t (u) \cap [i_a + 2b +1, m+t+d] = F_t (v) \cap [i_a + 2b +1, m+t+d]$, which in turn gives $\deg u = \deg v$ and $i_{\ell} = j_{\ell}$ whenever $b < \ell \le s = r$. Now the claim follows. 
Finally assume $i_{b+1}=0 $. Since $m+2b+1$ is the $(2b+1)$\textsuperscript{st} smallest element of $F_t(u)$ and $j_b+2b-1\leq m+2b-1<m+2b+1$ and we have $j_b+2b-1\notin F_t(u)$. Now the same reasoning as above yields the claim.

(c) Assume on the contrary that there is another facet, $F_t (w)$, that contains the ridge $R = F_t (u) \cap F_t (v)$. We consider two cases. 

\emph{Case 1.} Assume $\deg w \le t$. Since $[m+2t+1, m+d + t] \subset F_t (u) \cap F_t (v) \cap F_t (w)$, this implies that $R \cap [m+2t]$ is a ridge of $\Delta_t (U_{\le t})$ whose degree is at least three. This is a contradiction because $\Delta_t (U_{\le t})$ is a pseudomanifold by \cite{Ka}.

\emph{Case 2.} Assume $\deg w > t$. Then $m+ t + \deg w$ is not in $F_t (w)$, but it is in $R$ as the degrees of both $u$ and $v$ are not greater than $t$. This is a contradiction to  $R \subset F_t (w)$. 

(d) It follows from the definition of $F_t(\cdot)$, that if $\ell$ is the smallest $(2k-1)$\textsuperscript{st} element of $F_t(u)$ for $1\leq k\leq t$, then $\ell+1\in F_t(u)$. This implies the claim.
\end{proof}

We are ready for the main result of this section. It establishes the announced monotonicity of the singularity index.  In that sense it confirms the intuition that, for a fixed order ideal $U$, the complex $\Delta_t (U)$ is less squeezed and the closer to being a pseudomanifold the larger $t$ is.

\begin{theorem} 
      \label{thm:sing index}
Let $U$ be a shifted order ideal. Then the sequence of non-negative fractions $D (\Delta_0 (U)), D (\Delta_1 (U)),\ldots,D (\Delta_{d_{\max} (U)} (U))$ is strictly decreasing until it reaches zero, that is, if $0 \le t < d_{\max (U)}$, then 
\[
D (\Delta_{t+1} (U)) \ \begin{cases}
< D (\Delta_t (U)) & \text{ if } D (\Delta_t (U)) \neq 0 \\
= 0 & \text { if } D (\Delta_t (U)) = 0.  
\end{cases}
\]
Moreover, $D(\Delta_{d_{\max}}(U))=0$.
\end{theorem}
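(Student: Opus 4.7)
The final equality $D(\Delta_{d_{\max}(U)}(U))=0$ is immediate from \cite[Corollary~3.2]{Ka}: the complex $\Delta_{d_{\max}(U)}(U)$ is a ball, hence a pseudomanifold, so its singularity index vanishes. The bulk of the theorem is the strict decrease; the plan is to reduce it to a single combinatorial inequality that I then attack facet by facet.

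Set $d := d_{\max}(U)$, let $S_t := \sum_R \max\{0,\deg R - 2\}$ denote the numerator of $D(\Delta_t(U))$, and let $B_t$ denote the number of boundary ridges of $\Delta_t(U)$. Summing the pointwise identity $\max\{0,x-2\} = (x-2) + \max\{0,2-x\}$ (valid for every positive integer $x$) over all ridges of $\Delta_t(U)$, together with $\sum_R \deg R = (d+t)\,|U|$ and the observation that $\max\{0,2-\deg R\}$ equals $1$ exactly on boundary ridges, yields
\[
S_t = (d+t)\,|U| - 2 f_{d+t-2}(\Delta_t(U)) + B_t .
\]
By Corollary~\ref{cor:hVector} and the standard formula $f_{D-2}(\Delta) = \sum_j (D-j)h_j(\Delta)$ for a $(D-1)$-dimensional complex, the $h$-vector of $\Delta_t(U)$ is independent of $t$, so $f_{d+t-1}(\Delta_{t+1}(U)) - f_{d+t-2}(\Delta_t(U)) = |U|$. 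Subtracting the two formulas for $S_t$ and $S_{t+1}$ gives
\[
S_{t+1} - S_t = (B_{t+1} - B_t) - |U| .
\]
Because the denominator $f_{d+t-1}(\Delta_{t+1}(U))$ strictly exceeds $f_{d+t-2}(\Delta_t(U))$, to prove the whole theorem it suffices to establish the inequality $B_{t+1} \le B_t + |U|$ (equivalently $S_{t+1} \le S_t$): when $S_t = 0$ it forces $S_{t+1} = 0$, so $D(\Delta_{t+1}(U)) = 0$; when $S_t > 0$,
\[
D(\Delta_{t+1}(U)) = \frac{S_{t+1}}{f_{d+t-1}(\Delta_{t+1}(U))} \le \frac{S_t}{f_{d+t-1}(\Delta_{t+1}(U))} < \frac{S_t}{f_{d+t-2}(\Delta_t(U))} = D(\Delta_t(U)) .
\]

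To establish $B_{t+1} \le B_t + |U|$, I plan to prove the sharper facet-local bound $\beta_{t+1}(u) \le \beta_t(u) + 1$ for every $u \in U$, where $\beta_t(u)$ denotes the number of boundary ridges of $\Delta_t(U)$ contained in the facet $F_t(u)$. Since each boundary ridge lies in a unique facet, $B_t = \sum_{u \in U} \beta_t(u)$, and summing gives the required global bound. For fixed $u$, Lemma~\ref{lem:technical}(a) provides a canonical bijection between the $d+t+1$ ridges of the simplex on $F_{t+1}(u)$ and the disjoint union of the $d+t$ ridges of the simplex on $F_t(u)$ with one distinguished element, namely the ridge of $F_{t+1}(u)$ obtained by removing its $(2t+2)$-th smallest vertex. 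I then intend to show that every boundary ridge of $\Delta_{t+1}(U)$ contained in $F_{t+1}(u)$ is either this distinguished element (supplying the additive $+1$) or projects under the bijection to a boundary ridge of $\Delta_t(U)$ contained in $F_t(u)$. The main obstacle is this second alternative: given a non-distinguished ridge $R' \subset F_{t+1}(u)$ that is boundary in $\Delta_{t+1}(U)$, and supposing its projection lies in some other facet $F_t(w)$, I must produce a monomial $w' \ne u$ with $R' \subset F_{t+1}(w')$, contradicting that $R'$ is boundary. Because the deletion-shift operation depends on $u$, the pair of facets $F_{t+1}(u), F_{t+1}(w)$ is in general not obtained from $F_t(u), F_t(w)$ by a single global transformation, so a case analysis guided by Lemma~\ref{lem:technical}(b), (c), (d) is needed. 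In each case, the candidate $w'$ should take the form $x_k \cdot u / x_\ell$ for indices $k,\ell$ forced by the structure of $R'$, with $w' \in U$ guaranteed by the shiftedness of $U$.
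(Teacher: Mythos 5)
Your reduction at the top is clean and correct, and it is genuinely different from the paper's. The identity
\[
S_t \;=\; (d+t)\,|U| \;-\; 2\,f_{d+t-2}\bigl(\Delta_t(U)\bigr)\;+\;B_t,
\]
combined with $f_{d+t-1}(\Delta_{t+1}(U)) - f_{d+t-2}(\Delta_t(U)) = |U|$, does yield $S_{t+1}-S_t = (B_{t+1}-B_t)-|U|$, so the whole theorem really does reduce to $B_{t+1}\le B_t+|U|$. The paper instead works directly with $S_t$ and constructs an injection from high-degree ridges of $\Delta_t(U)$ to high-degree ridges of $\Delta_{t-1}(U)$ that is non-decreasing in degree, built on the key structural fact that every ridge of degree $\ge 3$ satisfies $q_{2t}=q_{2t-1}+1$; your reformulation via boundary ridges is an honest alternative entry point.

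However, the mechanism you propose for $B_{t+1}\le B_t+|U|$ fails. Your claim is that every boundary ridge of $\Delta_{t+1}(U)$ contained in $F_{t+1}(u)$ is either the distinguished ridge or projects (via the map of Lemma~\ref{lem:technical}(a)) to a boundary ridge of $\Delta_t(U)$ contained in $F_t(u)$. Here is a counterexample. Take $m=2$, $d=2$, $U$ the maximal order ideal, $t=1$, $u=1$. Then $F_2(1)=\{3,4,5,6\}$ and $F_1(1)=\{3,4,5\}$; the distinguished ridge is $\{3,4,5\}$ (omitting the $4$th smallest element, $6$). The ridge $R'=\{3,4,6\}$ of $\Delta_2(U)$ lies only in the facet $\{3,4,5,6\}$, so it is a boundary ridge, and it is not the distinguished one. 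Its projection is $\{3,4\}$, but $\{3,4\}$ lies in both $F_1(1)=\{3,4,5\}$ and $F_1(x_2^2)=\{2,3,4\}$, so it is \emph{not} a boundary ridge of $\Delta_1(U)$. Thus the dichotomy you rely on is false; the facet-local inequality $\beta_{t+1}(u)\le \beta_t(u)+1$ may still hold (it does in this example: $\beta_2(1)=2\le\beta_1(1)+1=2$, because the distinguished ridge $\{3,4,5\}$ happens not to be boundary either), but not by the argument you outline, and you have offered no other proof of it. The global bound $B_{t+1}\le B_t+|U|$ is equivalent to the paper's inequality $S_{t+1}\le S_t$, so it is true, but as written your proposal has a genuine gap: the combinatorial heart of the theorem, which the paper handles via the structural fact $q_{2t}=q_{2t-1}+1$ for high-degree ridges and an injectivity argument, is replaced by a claim that is both unproven and, in the specific form stated, false.
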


\begin{proof}
We have already seen that $D(\Delta_{d_{\max}}(U))=0$. So assume that $1\leq t< d_{\max}(U)$.   
By Example~\ref{exa:m=1} we may assume $m \ge 2$. 
Set $d = d_{\max} (U)$ and consider any ridge 
\[
F_t (u_1) \cap \cdots \cap F_t (u_{\delta}) = \{q_1,\ldots,q_{d+t-1}\}  
\]
of $\Delta_t (U)$ with degree $\delta \ge 3$ where $q_1 < q_2 < \cdots < q_{d+t-1}$. Put $W = \{u_1,\ldots,u_{\delta}\}$. 

Our first goal is to show that $q_{2 t} = q_{2t -1} + 1$. 
We distinguish two cases.

\smallskip
{\sf Case 1.} Assume that the smallest $2 t$ elements in every set $F_t (u_i), \ i = 1,\dots, \delta$, are the same. 

As $\delta\geq 3$ Lemma~\ref{lem:technical}(c) implies that at most one of the monomials in $W$ has degree less than or equal to $t$. In particular, there exists a monomial $u \in W$ of degree larger than $t$. Then the smallest $2t$ elements in $F_t (u)$ are unions of pairs of consecutive integers. By assumption, these elements are  $\{q_1,\ldots,q_{2t}\}$. It follows that    $q_{2 t} = q_{2t -1} + 1$, as desired. 
\smallskip 

{\sf Case 2.}  Assume that the smallest $2 t$ elements in the sets $F_t (u_i), \ i = 1,\dots, \delta$, are not the same. 

In this case we proceed in several steps.
\smallskip

(I) We claim that $1$ is not in $W$. Indeed, since $F_t (1) = [m+1, m+t+d]$ and $t\geq 1$ the set $F_t (1) \cap F_t (u)$ with $u \in U$ is a 
ridge of $\Delta_t (U)$ if and only if $u = x_m^s$ for some $s \in [d]$. Combined with the fact that 
$F_t (x_m^s) = [m, m+ t + d] \setminus \{ m+s + \min \{s, t\} \}$, it follows that $F_t (1) \cap F_t (u)$ is a ridge of degree 
$2 < \delta$. This  proves the  claim. Hence, every element in $W$ has positive degree. 
\smallskip

(II)  We are now going to show that every element in $W$ has degree greater than or equal to $a$. 
By the assumption of Case 2 and \Cref{lem:technical} there is some integer $a$ such that the smallest $2a-2$ elements in every set $F_t (u_i), \ i = 1,\dots, \delta$, are the same, but this is not true for the smallest $2a-1$ elements. Let $u \in W$ be a monomial such that the $(2a -1)$-st smallest element in $F_t (u)$ is minimal among these elements in the sets $F_t (w)$ with $w \in W$.
Write $u=x_{i_1}\cdots x_{i_s}$ with $1\leq i_1\leq \cdots \leq i_s$. 

(II.1) We claim that $\deg u\geq a$. Suppose not, then since $a \le t$ it follows that the $(2a -1)$-st smallest element in 
$F_t (u)$ is $m+2a-1$.  By assumption, there is a monomial  $v = x_{j_1} \cdots x_{j_r} \in W$ with  $j_1 \le \cdots \le j_r$ such that the $(2a -1)$\textsuperscript{st} smallest element in $F_t (v)$ is greater than the $(2a -1)$\textsuperscript{st} smallest element in $F_t (u)$. 
Furthermore, as $\delta\geq 3$, Lemma~\ref{lem:technical}(c) gives that $\deg v > t$, and so the $(2a -1)$\textsuperscript{st} smallest element in $F_t (v)$ is $j_a + 2a -2 < m + 2a -1$. This is a contradiction to the choice of $u$. Thus, we have shown $\deg u \ge a$.

(II.2) Let $v=x_{j_1} \cdots x_{j_r}\in W$ be as in (III.1). We are going to show that $\deg(v)\geq a$. If $r \le a-2$, then the  $(2a -3)$\textsuperscript{rd} smallest 
element in $F_t (v)$ is $m + 2a-3$. By the choice of $a$, this number is equal to $i_{a-1} + 2a - 4 < m+2a -3$. This is a contradiction.  
Suppose $r = a-1$. Then the choice of $a$ 
gives that every monomial in $W$ is divisible by $v$. Since $a \le t$, we infer that 
\begin{equation}
     \label{eq:subset for u}
i_a + 2a -2, i_a + 2a -1 \in F_t (u). 
\end{equation}
By the choice of $v$ we also know $i_a + 2a -2 \notin F_t (v)$. Since $F_t (u) \cap F_t (v)$ is a ridge this forces that $i_a + 2a -1$ is the $(2a -1)$\textsuperscript{st} smallest element in $F_t (v)$. On the other hand  this element has to be equal to $m + 2a -1$, which yields $i_a = m$. It follows from the definitions of $u$ and $a$ that every element in $W$ is of the form $v x_m^e$ for some integer $e \ge 0$. However, the intersection of the images of three distinct such 
monomials under the map $F_t$ does not give a ridge.  This contradiction shows that we must have  
$r \ge a$.

(II.3) Let $w = x_{k_1} \cdots x_{k_p} \in W$, where $k_1 \le \cdots \le k_p$, such that the $(2a-1)$\textsuperscript{st} element of $F_t(w)$ equals  $i_a+2a-2$, i.e., the $(2a-1)$\textsuperscript{st} element of $F_t(u)$.  Suppose $p = \deg w < a$.  Since $a \le t$, the $(2a -1)$\textsuperscript{st}  smallest element in $F_t (w)$ is then $m+2a-1 > i_a + 2a -2$. This contradicts the choice of $w$ and we must have $p\geq a$. \smallskip
 
(II.4) We will finally show that $q_{2t}=q_{2t-1}+1$. Let $v=x_{j_1} \cdots x_{j_r}\in W$ as in (II.2). By the above $r\geq a$ and by the choice of $u$, it must hold that $i_a < j_a$. If $j_a \ge i_a + 2$, then neither $i_a + 2a -2$ nor  $i_a + 2a -1$ lies in $F_t (v)$ but they both lie in $F_t(u)$ (see \eqref{eq:subset for u}). This gives a contradiction to the fact that $F_t (u) \cap F_t (v)$ is  a  ridge. Hence we get
\begin{equation}
     \label{eq:ja}
j_a = i_a + 1. 
\end{equation}
If $w= x_{k_1} \cdots x_{k_p}\in W\setminus\{u,v\}$ is any other monomial, then the same reasoning shows that $k_a\in \{i_a,i_a+1\}$.  We consider two cases. \smallskip 

{\sf Case 2a.} Assume $k_a = i_a$.  Then \eqref{eq:ja} combined with Lemma~\ref{lem:technical}(b)(i) give that $\deg u\geq \deg v$ and $\deg w \geq \deg v$. Applying Lemma~\ref{lem:technical}(c) we conclude that the degrees of $u$ and $w$ are greater than $t$. Suppose $i_{a+1} > i_a$. Since $j_a = i_a + 1$, Lemma~\ref{lem:technical}(b)(ii) yields $x_{i_a + 1} \cdot u = x_{i_a} \cdot v$. If now $k_{a+1} > k_a$ we also get $x_{i_a + 1} \cdot w = x_{i_a} \cdot v$, and so $u = w$, a contradiction.  This forces $k_{a+1} = k_a = i_a$. Using also the assumption $i_{a+1} > i_a$ we conclude that 
\[
F_t (u) \cap F_t (v) \cap F_t (w) \subset F_t (w) \setminus \{ i_a + 2a-2, i_a + 2a \}. 
\]
Hence the left-hand side cannot be a ridge. This contradiction implies $i_{a+1} = i_a$. As $i_a + 2a -2 \notin F_t (v)$, this yields
\[
\bigcup_{\ell=1}^{a-1}\{i_\ell+2(\ell-1),i_\ell+2\ell-1\} \cup \{i_a + 2a -1, i_a + 2a \} \subset F_t (u) \cap F_t (v). 
\]
If $a = t$ this gives $q_{2t} = i_a + 2a = q_{2t-1} + 1$, as desired. 

Assume $a < t$. Using that $i_{a+1}+2a+1=i_a + 2a + 1 = j_a + 2a\in F_t (u)$ and $i_a+2a-1\in F_t(u)\setminus F_t(v)$ and the fact that $F_t(u)\cap F_t(v)$ is a ridge, we conclude that $j_a + 2 a \in F_t (v)$ and thus $\deg v > a$ and $j_{a+1} = j_a$. Moreover, by symmetry, the same reasoning as above shows that $k_{a+1} = k_a$. Hence in Case 2a, we have shown that $i_a = k_a = j_a - 1$ and $a < t$ imply $i_{a+1} = i_a = k_a = k_{a+1} $ and $j_{a+1} = j_a = i_a + 1$.  If $a+1 < t$ we repeat the argument to conclude that we must have $i_t  = \cdots = i_a = k_a = \cdots = k_t$ and $j_t = \cdots = j_a = i_a + 1$. Combined with $i_a + 2a -2 \notin F_t (v)$, this implies
\[
\bigcup_{\ell=1}^{a-1}\{i_\ell+2(\ell-1),i_\ell+2\ell-1\} \cup [i_a + 2a -1, i_a + 2t] \subset F_t (u) \cap F_t (v). 
\]
If follows that  $q_{2t} = i_a + 2t = q_{2t-1} + 1$, as desired. 
\smallskip 

{\sf Case 2b.} Assume $k_a = i_a + 1$. As in Case 2a, we have $\deg u \ge t+1 \ge a+1$.  If $i_{a+1} > i_a$, then Lemma~\ref{lem:technical}(b)(ii) gives $x_{i_a + 1} \cdot u = x_{i_a} \cdot v$ and $x_{i_a + 1} \cdot u = x_{i_a} \cdot w$, and so $v = w$, a contradiction. Hence, we must have $i_{a+1} = i_a$. It follows that 
\[
\bigcup_{\ell=1}^{a-1}\{i_\ell+2(\ell-1),i_\ell+2\ell-1\} \cup \{i_a + 2a -1, i_a + 2a \} \subset F_t (u) \cap F_t (v). 
\]
If $a = t$, then we deduce that $q_{2t} = i_a + 2a = q_{2t-1} + 1$, as desired. If $a < t$, we get 
$[i_a + 2a-2, i_a + 2a+1] \subset F_t (u)$. As $i_a + 2a - 2$ is neither in $F_t (v)$ nor in $F_t (w)$, this forces 
$i_a + 2a + 1 = j_a  + 2a = k_a + 2a \in F_t (v) \cap F_t (w)$. It follows that $j_{a+1} = j_a = k_a = k_{a+1}$. Hence we have 
shown that $k_a = j_a = i_a + 1$ and $a < t$ imply $j_{a+1} = j_a = k_a = k_{a+1} = i_a + 1$ and $i_{a+1} = i_a$. If $a+1 < t$ 
we repeat the argument to conclude that $j_t  = \cdots = j_a = k_a = \cdots = k_t = i_a +1$ and $i_t = \cdots = i_a$. This implies  as in Case 2a the desired equality $q_{2t} = i_a + 2t = q_{2t-1} + 1$. \smallskip 

Summarizing, we have proven that every ridge 
\[
F_t (u_1) \cap \cdots \cap F_t (u_{\delta}) = \{q_1,\ldots,q_{d+t-1}\}  = R
\]
of degree $\delta \ge 3$ satisfies $q_{2t} = q_{2t-1} + 1$. We are now going to show that, for every $j = 1,\ldots,\delta$, one has  
\begin{equation}
    \label{eq:smaller ridge}
\tilde{R} = \{q_1,\ldots,q_{2t-1}, q_{2t+1}-1,\ldots, q_{d+t-1}-1 \}  \subset F_{t-1} (u_j). 
\end{equation}
To this end fix some $u_j \in W$. Since $R$ is a ridge contained in $F_t (u_j)$, we get $F_t (u_j) = R \cup \{p\}$ for some $p$. If $p > q_{2t}$ Lemma~\ref{lem:technical}(a) yields $F_{t-1}(u_j) = \tilde{R} \cup \{p-1\}$, whereas we get $F_{t-1}(u_j) = \tilde{R} \cup \{q_{2t}-1\}$ if  $ q_{2t -1} < p < q_{2t}$.  Assume $p < q_{2t-1}$. Then we obtain 
\[
F_{t-1} (u_j) = \{q_1,\ldots,q_{2t-2}, p\} \cup \{q_{2t}-1,\ldots, q_{d+t-1}-1 \}.
\]
Since $q_{2t} -1 = q_{2t-1}$ it follows that $F_{t-1}(u_j) = \tilde{R} \cup \{p\}$. This completes the 
argument for establishing Containment~\eqref{eq:smaller ridge}. It shows that every ridge $R$ of 
$\Delta_t (U)$ whose degree is at least three gives rise to a ridge $\tilde{R}$ of $\Delta_{t-1} (U)$ 
whose degree is at least as big as the one of $R$. Moreover, the fact that for such ridges of $\Delta_t (U)$ the $(2t)$\textsuperscript{th} smallest element is determined by the $(2t-1)$\textsuperscript{st} smallest element implies that under the above correspondence no two distinct ridges of $\Delta_t (U)$ with degrees $\ge 3$ give rise to the same ridge of $\Delta_{t-1} (U)$. We conclude that 
\[
f_{d+t-2} (\Delta_t (U)) \cdot D (\Delta_t (U)) \le f_{d+t-3} (\Delta_{t-1} (U)) \cdot D (\Delta_{t-1} (U)).
\]
Since $f_{d+t-2} (\Delta_t (U))  > f_{d+t-3} (\Delta_{t-1} (U))$ our assertions follows. 
\end{proof}

\begin{remark}
Using similar arguments as in the proof of \Cref{thm:sing index} it is possible to show that also any ridge of $\Delta_t(U)$ of degree $2$, where $1\leq t\leq d_{\max}(U)$, gives rise to a ridge of degree \emph{at least} $2$ in $\Delta_{t-1}(U)$. However, the arguments are a bit more involved and more cases have to be distinguished. Since this (stronger) statement is not needed in this article, we omit the proof.
\end{remark}

\section{Open questions}
We close this article with some open questions. \\

\Cref{thm:sing index} together with \Cref{cor:shifted are squeezed} allows us to interpret the process of passing from $\Delta_0(U)$ to $\Delta_{d_{\max}}(U)$ via the family of squeezed complexes $(\Delta_t(U))_t$, where $U$ is a shifted order ideal, as a procedure to gradually convert a shifted simplicial complex into a ball with the same $h$-vector (up to additional zeros). This is reminiscent of the process of resolving singularities of a variety by repeated blow-ups in algebraic geometry. The following natural question arises:

\begin{question}
Can the procedure that converts a shifted simplicial complex into a pseudomanifold (of higher dimension) with the same $h$-vector be generalized to other families of simplicial complexes? For instance, given a non-pure shifted simplicial complex, is there a construction that yields a (non-pure) pseudomanifold with the same $h$-vector and that recovers squeezed complexes in the pure case?
\end{question}

We would like to remark that since a shifted simplicial complex $\Delta$ is always sequentially Cohen-Macaulay (even non-pure shellable) \cite[Section 4.5]{Ka-02}, a first (and rather naive) approach to this question could be to first apply the construction provided by the proof of \Cref{cor:shifted are squeezed} to the pure skeleta of $\Delta$ and then somehow ``glue''  the resulting complexes together.
\smallskip

On the one hand, we have seen in \Cref{thm:max order manifold} that for a maximal order ideal $U\subset P(m)$, a squeezed complex $\Delta_t(U)$ is a ball precisely if $t\geq d_{\max}(U)$. This means that for maximal order ideals the only squeezed balls are already given by the original construction due to Kalai. On the other hand, if $U\subset P(m)$ is such that the monomials of highest degree have a common divisor (which necessarily has to be a power of the variable $x_m$), then \Cref{prop: common divisor} shows that $\Delta_{d_{\max}(U)-1}(U)$ is a ball. Motivated by this, we suggest the following problem for further studies:

\begin{problem}\label{prob:balls}
Find classes of shifted order ideals such that for every shifted order ideal $U$ in this class there is a positive integer $\ell<d_{\max}(U)$ such that all squeezed complexes $\Delta_t(U)$ are balls for $t\geq \ell$. (Note that once, $\Delta_\ell(U)$ is a ball, so is $\Delta_{t}(U)$ for $t\geq \ell$ by \Cref{thm:sing index}.)
\end{problem}

Building on this problem, it is natural to study the boundaries of the squeezed balls $\Delta_t(U)$, where $U$ lies in a class of order ideals as defined by \Cref{prob:balls}. The first question that arises in this context is the following:

\begin{question}\label{question:new spheres}
Let $U$ be shifted order ideal such that $\Delta_\ell(U)$ is a ball for some $\ell<d_{\max}(U)$ (and so $\Delta_t(U)$ is a ball for $t\geq \ell$). Is there a shifted order ideal $\widetilde{U}$ with $d_{\max} (\widetilde{U}) \le \ell$ such that  the boundary of $\Delta_{d_{\max}(\widetilde{U})}(\widetilde{U})$ is isomorphic to the boundary of $\Delta_\ell(U)$, meaning that the boundary of $\Delta_\ell(U)$ is already a squeezed sphere in the sense of Kalai?	
\end{question}
There are only finitely many candidates for such an order ideal $\widetilde{U}$. Indeed, it follows from \cite[Proposition 5.2]{Ka} (see also \cite[Lemma 5.1]{Mu-10}) that the $h$-vector of $\partial \Delta_\ell(U)$ determines the $h$-vector of $\widetilde{U}$ and there are only finitely many shifted order ideals with the same $h$-vector. However, we do not even know the answer to this question if $U$ satisfies the assumption of \Cref{prop: common divisor}. In this case, even though~--~up to coning~--~$\Delta_{d_{\max}(U)-1}(U)$ is a squeezed complex in the sense of Kalai, it is not clear that also its boundary is a classical squeezed sphere. However, in all examples, we worked out, this was always the case. 

A negative answer to \Cref{question:new spheres} would imply that there are more spheres that can be obtained from our construction than there are squeezed spheres in the sense of Kalai. As a consequence, it would then be natural to study properties of those spheres such as strongly edge decomposability and Lefschetz properties (cf. \cite{Mu-10}).
\vspace*{1cm}

\end{document}